\newtheorem{theorem}{Theorem}[section]
\newtheorem{proposition}{Proposition}[section]
\newtheorem{lemma}{Lemma}[section]
\newtheorem{corollary}{Corollary}[section]
\newtheorem{remark}{Remark}[section]
\numberwithin{equation}{section}
\title[determination of the conductivity at the boundary]{Comments on the determination of the conductivity at the boundary from the Dirichlet-to-Neumann map}
\author[Mourad Choulli]{Mourad Choulli}
\address{Universit\'e de Lorraine}
\email{mourad.choulli@univ-lorraine.fr}
\thanks{The author is supported by the grant ANR-17-CE40-0029 of the French National Research Agency ANR (project MultiOnde). }
\date{}
\begin{document}

\begin{abstract}
We revisit the stability issue of determining the conductivity at the boundary from the corresponding Dirichlet-to-Neumann map. We discuss both the method based on singular solutions and the one built on the localized oscillating solutions. Our primary objective  is not establishing new results on this subject even if the present work contains some new ones. We mainly clarify some points in the existing proofs and make some comments. We also derive some consequences of the stability inequality of the determination of the conductivity at the boundary from the Dirichlet-to-Neumann map.
\end{abstract}

\subjclass[2010]{35R30}

\keywords{Determining the conductivity at the boundary, stability inequality, generic uniqueness, fundamental solution, Levi's parametrix method, singular solutions, localized oscillating solutions.  }

\maketitle

\tableofcontents

\section{Introduction}\label{section1}

Throughout this text all functions we consider are real-valued. Let $\Omega$ be a Lipschitz bounded domain of $\mathbb{R}^n$ ($n\ge 3$), with boundary $\Gamma$, and set
\[
C_+(\overline{\Omega})=\{ \sigma \in C(\overline{\Omega});\; \sigma >0\; \mbox{in}\; \overline{\Omega}\}.
\]
We consider, where $\sigma \in C_+(\overline{\Omega})$, the symmetric bounded and coercive bilinear form
\[
\mathfrak{a}_\sigma (u,v)=(\sigma\nabla u|\nabla v)_2,\quad u,v\in H_0^1(\Omega),
\]
where $(\cdot |\cdot)_2$ is the usual scalar product  of $L^2(\Omega )$.

Let $F\in H^{-1}(\Omega)$. There exists, according to Lax-Milgram's lemma, a unique $v_\sigma (F)\in H_0^1(\Omega)$ so that
\[
\mathfrak{a}_\sigma (v_\sigma (F),w)=\langle F,w\rangle_1,\quad  w\in H^1(\Omega),
\]
where $\langle \cdot ,\cdot \rangle_1$ is the duality pairing between $H_0^1(\Omega)$ and its dual $H^{-1}(\Omega)$.

Denote by $\gamma_0$ the bounded trace operator from $H^1(\Omega )$ onto $H^{1/2}(\Gamma )$ defined by 
\[
\gamma_0w =w_{|\Gamma},\quad w\in C^\infty (\overline{\Omega }).
\]
For simplicity convenience, $\gamma_0w$ is  denoted in the sequel by $w_{|\Gamma}$.

For $h\in H^{1/2}(\Gamma)$, let $\mathcal{E}h$ denotes the unique element of $H^1(\Omega)$ satisfying $\mathcal{E}h_{|\Gamma}=h$ and
\[
\|\mathcal{E}h\|_{H^1(\Omega)}=\|h\|_{H^{1/2}(\Gamma)}.
\]
Pick $g\in H^{1/2}(\Gamma)$. It is then not difficult to check that 
\[
u_\sigma(g)=\mathcal{E}g+v_\sigma (-\mbox{div}(\sigma \nabla \mathcal{E}g))\in H^1(\Omega)
\]
 is the unique solution of the BVP

\[
\left\{
\begin{array}{l}
\mbox{div}(\sigma \nabla u)=0\quad \mbox{in}\; \Omega,
\\
u_{|\Gamma}=g.
\end{array}
\right.
\]
Furthermore,
\begin{equation}\label{i1}
\|u_\sigma(g)\|_{H^1(\Omega )}\le \varkappa \|g\|_{H^{1/2}(\Gamma)},
\end{equation}
where $\varkappa =\varkappa (n,\Omega ,\min \sigma) >0$ is a constant. 

We define the Dirichlet-to-Neumann map, associated to $\sigma \in C_+(\overline{\Omega})$, by
\begin{align*}
\Lambda_\sigma :g\in H^{1/2}(\Gamma )\mapsto &\Lambda_\sigma (g)\in H^{-1/2}(\Gamma): 
\\
&\langle\Lambda_\sigma (g),h\rangle_{1/2}=\mathfrak{a}_\sigma(u_\sigma(g),\mathcal{E}h),\quad h\in H^{1/2}(\Gamma),
\end{align*}
where $\langle \cdot ,\cdot\rangle_{1/2}$ is the duality pairing between $H^{1/2}(\Gamma)$ and its dual $H^{-1/2}(\Gamma)$. We remark that we have, according to \cite[Lemma 2.2 in page 131]{Ka}, 
\[
\Lambda_\sigma(g)=\sigma\partial_\nu u_\sigma(g), 
\]
where $\partial_\nu$ denotes the derivative along the unit normal exterior vector field to $\Gamma$. In light of \eqref{i1}, we obtain
\[
|\langle\Lambda_\sigma (g),h\rangle_{1/2}|\le \varkappa \|\sigma\|_{C(\overline{\Omega})}\|g\|_{H^{1/2}(\Gamma)}\|h\|_{H^{1/2}(\Gamma)},\quad h\in H^{1/2}(\Gamma).
\]
Whence  $\Lambda_\sigma \in \mathscr{B}(H^{1/2}(\Gamma ),H^{-1/2}(\Gamma))$.

For notational convenience  the natural norm  of $\mathscr{B}(H^{1/2}(\Gamma ),H^{-1/2}(\Gamma))$ will simply denoted in the rest of this text by $\|\cdot \|$.

Fix $0<\alpha\le1$, $\kappa >1$ and let
\[
\Sigma = \left\{ \sigma \in C^{1,\alpha}(\overline{\Omega});\; \kappa^{-1}\le \sigma,  \;  \|\sigma\|_{C^{1,\alpha}(\overline{\Omega})}\le \kappa\right\}.
\]

\begin{theorem}\label{theorem-i1}
If $\Omega$ is of class $C^{1,1}$ then, for all $\sigma_1,\sigma_2\in \Sigma$, we have
\begin{align}
& \|\sigma_1-\sigma_2 \|_{C(\Gamma)}\le C\|\Lambda_{\sigma_1}- \Lambda_{\sigma_2}\|,\label{thm-i1.1}
\\
& \|\partial_\nu (\sigma_1-\sigma_2) \|_{C(\Gamma)}\le C\|\Lambda_{\sigma_1}- \Lambda_{\sigma_2}\|^{\alpha/(\alpha+1)}, \label{thm-i1.2}
\end{align}
where $C=C(n,\Omega ,\kappa,\alpha )>0$ is a constant.
\end{theorem}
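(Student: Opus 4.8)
The plan is to recover the boundary values of the conductivity and its normal derivative by probing the Dirichlet-to-Neumann map with special solutions that concentrate near a chosen boundary point. This is the classical Kohn–Vogelius / Sylvester–Uhlmann boundary determination strategy. The key identity I would lean on is the difference formula: for any $g, h \in H^{1/2}(\Gamma)$,
$$\langle (\Lambda_{\sigma_1} - \Lambda_{\sigma_2})g, h\rangle_{1/2} = \int_\Omega (\sigma_1 - \sigma_2)\nabla u_{\sigma_1}(g)\cdot \nabla u_{\sigma_2}(h)\, dx,$$
which follows from the bilinear-form definition of $\Lambda_\sigma$ together with the weak formulation satisfied by $u_{\sigma_1}(g)$ and $u_{\sigma_2}(h)$ (a symmetrized version, choosing $h=g$ and exploiting the symmetry of $\mathfrak a_\sigma$). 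Taking $h = g$ gives a quadratic form controlled by $\|\Lambda_{\sigma_1}-\Lambda_{\sigma_2}\|\,\|g\|_{H^{1/2}(\Gamma)}^2$.

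Let me think about what boundary solutions to use. I would fix a boundary point $x_0 \in \Gamma$ and construct a family of oscillating boundary data $g_\tau$ (indexed by a large frequency parameter $\tau$) localized in a shrinking neighborhood of $x_0$, so that the solutions $u_\sigma(g_\tau)$ decay exponentially into the interior on the scale $\tau^{-1}$. Flattening the boundary near $x_0$ and freezing the coefficient $\sigma(x_0)$, the leading-order behavior of $u_\sigma(g_\tau)$ is an explicit decaying exponential whose energy concentrates in a boundary layer of width $\tau^{-1}$. The first step is to establish sharp asymptotics: compute $\int_\Omega \sigma |\nabla u_\sigma(g_\tau)|^2\,dx$ as $\tau \to \infty$ with appropriately normalized $g_\tau$, showing the leading term is a universal constant times $\sigma(x_0)\tau$ plus a correction of order $\partial_\nu\sigma(x_0)$ times a lower power of $\tau$, with remainder controlled via the $C^{1,\alpha}$ regularity. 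The bulk of the work—and the \emph{main obstacle}—is proving these asymptotic expansions with quantitative remainder estimates, because the localized solutions for the variable-coefficient operator $\mathrm{div}(\sigma\nabla\cdot)$ are not explicit; one must compare $u_\sigma(g_\tau)$ to the frozen-coefficient model solution and bound the difference using the $C^{1,\alpha}$ modulus of continuity of $\sigma$ and elliptic energy estimates, keeping track of how the error scales in $\tau$.

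With the asymptotics in hand, the second step is to subtract the two expansions for $\sigma_1$ and $\sigma_2$. Applying the difference identity to $g = h = g_\tau$ and inserting the expansions yields, to leading order,
$$c_n\,(\sigma_1(x_0) - \sigma_2(x_0))\,\tau + O\!\left((\text{lower order})\right) \le \|\Lambda_{\sigma_1}-\Lambda_{\sigma_2}\|\,\|g_\tau\|_{H^{1/2}(\Gamma)}^2.$$
Here I must be careful that the test solutions $u_{\sigma_1}(g_\tau)$ and $u_{\sigma_2}(g_\tau)$ use \emph{different} coefficients, so the cross term $\int_\Omega(\sigma_1-\sigma_2)\nabla u_{\sigma_1}(g_\tau)\cdot\nabla u_{\sigma_2}(g_\tau)$ must be shown to have the same leading asymptotic as the single-coefficient quadratic form up to the relevant order; this again uses the comparison to the common frozen model. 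Normalizing $\|g_\tau\|_{H^{1/2}(\Gamma)}$ appropriately (it grows like a power of $\tau$), dividing through, and taking $\tau \to \infty$ isolates $|\sigma_1(x_0)-\sigma_2(x_0)| \le C\|\Lambda_{\sigma_1}-\Lambda_{\sigma_2}\|$. Taking the supremum over $x_0\in\Gamma$ gives the $C(\Gamma)$ estimate \eqref{thm-i1.1}.

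For the normal-derivative estimate \eqref{thm-i1.2}, I would push the expansion to the next order: once the leading $\sigma$-term is controlled by \eqref{thm-i1.1}, the subleading term in the expansion of the quadratic form carries the factor $\partial_\nu(\sigma_1-\sigma_2)(x_0)$ against a coefficient scaling like a fixed power of $\tau$ smaller than the leading one. The appearance of the Hölder exponent $\alpha/(\alpha+1)$ is characteristic of optimizing a bound of the form $\tau^{-a}\|\partial_\nu(\sigma_1-\sigma_2)\| + \tau^{b}\|\Lambda_{\sigma_1}-\Lambda_{\sigma_2}\|$ (or a similarly structured two-term bound coming from the $C^{1,\alpha}$ remainder versus the data term) over the free parameter $\tau$: the remainder in the frozen-coefficient comparison is $O(\tau^{-\alpha})$ relative to the normal-derivative term, and balancing the two competing errors by choosing $\tau = \|\Lambda_{\sigma_1}-\Lambda_{\sigma_2}\|^{-1/(\alpha+1)}$ produces exactly the exponent $\alpha/(\alpha+1)$. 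The delicate point in this second part is extracting the normal derivative cleanly: I must ensure the model expansion separates $\sigma(x_0)$ from $\partial_\nu\sigma(x_0)$ at distinct orders in $\tau$ and that the tangential derivatives of $\sigma$ do not contaminate the leading normal-derivative coefficient, which is where the localization of $g_\tau$ in tangential directions (on a scale compatible with $\tau$) and the Lipschitz/$C^{1,1}$ regularity of $\Gamma$ enter crucially.
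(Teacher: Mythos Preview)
Your plan follows the localized oscillating-solutions route (Kohn--Vogelius / Sylvester--Uhlmann), whereas the paper's primary proof of Theorem~\ref{theorem-i1} proceeds via \emph{singular solutions}: one places the pole $y_\delta$ of a fundamental-solution-like function just outside $\Gamma$ near $x_0$, uses the Levi parametrix estimates (Theorem~\ref{theorem1}) so that $\nabla u_1\cdot\nabla u_2$ behaves like $|x-y_\delta|^{2-2n}$, and lets $\delta\to 0$. For \eqref{thm-i1.1} both strategies succeed and are essentially interchangeable; your difference identity and concentration mechanism are the right ingredients.

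The substantive issue is \eqref{thm-i1.2}. The paper actually carries out your proposed oscillating-solutions argument in full in Section~\ref{section5}, and it does \emph{not} yield the exponent $\alpha/(\alpha+1)$: it gives only $\alpha/[2(\alpha+1)]$. The reason is structural. To isolate $\partial_\nu\sigma(x_0)$ one needs a lower bound on a weighted energy of the form $\int_{B_\rho\cap\Omega}\mathrm{dist}(x,\Gamma)\,|\nabla u_\sigma^k|^2\,dx$, and Lemma~\ref{lemma-os.4} shows this quantity is only of order $k^{-1}$ (one power of the frequency is lost to the weight $\mathrm{dist}(x,\Gamma)$). After dividing through and optimizing, the resulting two-term bound is $k\|\Lambda_1-\Lambda_2\| + (1+\rho^{-2})k^{-1}$, which balances at $\|\Lambda_1-\Lambda_2\|^{1/2}$; combined with the constraint $\rho\lesssim\|\partial_\nu\sigma\|^{1/\alpha}$ this produces $\alpha/[2(\alpha+1)]$. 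Your heuristic ``$O(\tau^{-\alpha})$ remainder balanced against the data term'' skips over this loss and is where the argument would fail to reach the stated exponent.

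To obtain the sharp $\alpha/(\alpha+1)$, the paper instead invokes Alessandrini's singular solutions of \emph{arbitrary order} (Theorem~\ref{thm-Al90}): solutions whose gradients blow up like $|x-y_\delta|^{1-(n+\ell)}$ for any integer $\ell\ge 1$. With $\ell$ large enough, the weighted integral against $\mathrm{dist}(x,\Gamma)$ no longer loses a critical power, and the optimization in $\delta$ gives directly $\delta^{-1}\|\Lambda_1-\Lambda_2\| + \delta^{\alpha}$, hence the exponent $\alpha/(\alpha+1)$. This higher-order-singularity ingredient is the key idea missing from your outline; without it (or an equivalent device), the oscillating-solutions method as you describe it will stall at the weaker exponent, as the paper itself remarks in the introduction.
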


As $\alpha\in ]0,1] \mapsto \alpha/(1+\alpha)$ is strictly increasing, the best possible exponent in \eqref{thm-i1.2} is obtained when $\alpha=1$ and it is equal to $1/2$.

In light of the interpolation inequality in \cite[Lemma 3.2 in page 264]{Al90} (in which we substitute $\tilde{\nu}$ by $\nu$), we deduce from Theorem \ref{theorem-i1} the following corollary.

\begin{corollary}\label{corollary-i1}
Assume that  $\Omega$ is of class $C^{1,1}$.  Then, for all $\sigma_1,\sigma_2\in \Sigma$, we have
\begin{align*}
& \|\sigma_1-\sigma_2 \|_{C(\Gamma)}\le C\|\Lambda_{\sigma_1}- \Lambda_{\sigma_2}\|,
\\
& \|\nabla (\sigma_1-\sigma_2) \|_{C(\Gamma)}\le C\|\Lambda_{\sigma_1}- \Lambda_{\sigma_2}\|^{\alpha/(\alpha+1)}, 
\end{align*}
where $C=C(n,\Omega ,\kappa,\alpha )>0$ is a constant.
\end{corollary}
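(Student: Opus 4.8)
The plan is to deduce Corollary \ref{corollary-i1} directly from Theorem \ref{theorem-i1}, the only genuinely new content being the passage from the normal derivative $\partial_\nu(\sigma_1-\sigma_2)$ controlled in \eqref{thm-i1.2} to the full gradient $\nabla(\sigma_1-\sigma_2)$ on $\Gamma$. The first inequality of the corollary is literally \eqref{thm-i1.1}, so no work is needed there; I would concentrate entirely on the gradient estimate. Writing $w=\sigma_1-\sigma_2$, the starting observation is that on the boundary the gradient splits orthogonally into a tangential and a normal part,
\[
\nabla w = \nabla_\Gamma w + (\partial_\nu w)\,\nu \quad \text{on }\Gamma,
\]
where $\nabla_\Gamma$ is the tangential (surface) gradient, so that $|\nabla w|^2=|\nabla_\Gamma w|^2+|\partial_\nu w|^2$ pointwise on $\Gamma$. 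It therefore suffices to bound $\|\nabla_\Gamma w\|_{C(\Gamma)}$ and $\|\partial_\nu w\|_{C(\Gamma)}$ separately, and the normal part is already provided by \eqref{thm-i1.2}.

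The structural reason the theorem must supply two inequalities becomes clear here: the tangential gradient $\nabla_\Gamma w$ depends only on the boundary values of $w$, whereas the normal derivative $\partial_\nu w$ cannot be recovered from those values alone. For the tangential part I would invoke the interpolation inequality of \cite[Lemma 3.2 in page 264]{Al90}, with $\tilde\nu$ replaced by $\nu$ as indicated in the text. Since $\sigma_1,\sigma_2\in\Sigma$ we have $\|w\|_{C^{1,\alpha}(\overline{\Omega})}\le 2\kappa$, whence the trace of $w$ on the $C^{1,1}$ boundary has $C^{1,\alpha}(\Gamma)$ norm bounded by a constant depending only on $n,\Omega,\kappa$. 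The interpolation inequality then controls the tangential gradient by a power of the sup-norm of the boundary trace,
\[
\|\nabla_\Gamma w\|_{C(\Gamma)}\le C\,\|w\|_{C(\Gamma)}^{\alpha/(\alpha+1)},
\]
the factor $\|w\|_{C^{1,\alpha}}^{1/(\alpha+1)}$ being absorbed into $C$ (the exponents are the expected ones: interpolating a first-order quantity between order $0$ and order $1+\alpha$ forces the weight $\alpha/(\alpha+1)$). Inserting \eqref{thm-i1.1}, namely $\|w\|_{C(\Gamma)}\le C\|\Lambda_{\sigma_1}-\Lambda_{\sigma_2}\|$, gives $\|\nabla_\Gamma w\|_{C(\Gamma)}\le C\|\Lambda_{\sigma_1}-\Lambda_{\sigma_2}\|^{\alpha/(\alpha+1)}$, exactly the exponent carried by the normal part, and adding the two contributions yields the second inequality of the corollary.

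The one delicate point—and the main, though modest, obstacle—is the correct geometric application of Alessandrini's interpolation inequality on the curved $C^{1,1}$ boundary. I would need to verify that the constant can be taken uniform over $\Sigma$, depending only on $n,\Omega,\kappa,\alpha$ through the bound $\|w\|_{C^{1,\alpha}(\overline{\Omega})}\le 2\kappa$, and that the replacement $\tilde\nu\to\nu$ is legitimate, i.e.\ that the inequality is applied with the genuine outward unit normal field of $\Gamma$ rather than an auxiliary direction. Once uniformity of the constant is secured, the matching exponents make the conclusion immediate, so the entire argument reduces to this bookkeeping on the constants.
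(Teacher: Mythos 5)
Your proof is correct and takes essentially the same route as the paper: the paper deduces the corollary from Theorem \ref{theorem-i1} precisely by invoking the interpolation inequality of \cite[Lemma 3.2, page 264]{Al90} with $\tilde{\nu}$ replaced by $\nu$, which is exactly your mechanism of splitting $\nabla w=\nabla_\Gamma w+(\partial_\nu w)\nu$, interpolating the tangential part between $\|w\|_{C(\Gamma)}$ and the $C^{1,\alpha}$ bound $2\kappa$, and feeding in the two inequalities of the theorem. The only difference is presentational: you spell out the tangential/normal decomposition, the absorption of the H\"older seminorm into the constant, and the uniformity check that the paper leaves implicit in its one-line citation.
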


Theorem \ref{theorem-i1} was established by Alessandrini \cite{Al90} using singular solutions. 
An earlier result by Kohn and Vogelius \cite{KV84} gives a uniqueness of the conductivity and its normal derivative at the boundary. We also mention the works of Brown \cite{Br} and Nakamura and Tanuma \cite{NT} for reconstruction formulas (for derivatives of arbitrary order in \cite{NT}). The main idea introduced in \cite{KV84} consists in constructing oscillating solutions localized at a boundary point. We revisit this construction in the last section to show that it gives also  stability inequalities similar to that in Theorem \ref{theorem-i1}. The only difference is that we obtain, instead of $\alpha/(\alpha+1)$ in \eqref{thm-i1.2}, the exponent $\alpha/[2(\alpha+1)]$. We initially expected to get by this method the same exponent as in \eqref{thm-i1.2} but actually we do not succeed to modify our analysis to prove it.

A variant of Theorem \ref{theorem-i1} was proven by Sylvester and Uhlamnn \cite{SU88} by using tools from microlocal analysis. They showed that the informations on conductivity and its normal derivative at the boundary are contained in the two principal terms of the full symbol of $\Lambda_\sigma$, considered as a pseudo-differential operator of order one. These informations are extracted by  using again oscillating solutions localized at a boundary point. The results in \cite{SU88} yield a H\"older stability for the normal derivative at the boundary, with an exponent $\gamma$ satisfying $0<\gamma< 1/(n+1)$ (see for instance \cite[Theorem 4.2 in page 6]{Uh09}).

We define, for fixed  $\dot{\kappa}>1$ and $p>n$,
\[
\dot{\Sigma}=\left\{\sigma \in W^{2,p}(\Omega);\; \dot{\kappa}^{-1} \le \sigma ,\; \|\sigma\|_{W^{2,p}(\Omega)}\le \dot{\kappa}\right\}.
\]

In light of \cite[Corollaire 1.2 in page 30]{Ch13}, we can assert that $W^{2,p}(\Omega)$ is continuously embedded in $C^{1,\alpha}(\overline{\Omega})$, when $\alpha=1-n/p$. Therefore  we have obviously $\dot{\Sigma}\subset \Sigma$, with $\alpha=1-n/p$ and some constant $\kappa =\kappa (n,\Omega ,p, \dot{\kappa})>1$.

For $0<c<1$, define $\Phi_c$ by 
\[
\Phi_c(\varrho)=\left\{
\begin{array}{ll}
0 &\mbox{if}\; \varrho=0,
\\
|\ln \varrho|^{-2/(n+2)}\quad &\mbox{if}\; \varrho \in (0,c),
\\
\varrho &\mbox{if}\; \varrho \in [c,\infty).
\end{array}
\right.
\]

\begin{theorem}\label{theorem-i2}
Suppose that  $\Omega$ is of class $C^{1,1}$. Then, for all $\sigma_1,\sigma_2\in \dot{\Sigma}$, we have
\begin{equation}\label{thm-i2.1}
\|\sigma_1-\sigma_2\|_{H^1(\Omega)}\le C\Phi_c \left(\|\Lambda_{\sigma_1}-\Lambda_{\sigma_2}\|\right),
\end{equation}
where $c=c(n,\Omega ,p,\dot{\kappa})>0$ and $C=C(n,\Omega ,p,\dot{\kappa})>0$ are constants.
\end{theorem}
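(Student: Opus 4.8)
The plan is to pass through the classical reduction to a Schr\"odinger potential together with a complex geometric optics argument, and to feed the boundary determination of Theorem \ref{theorem-i1} into this machinery to control all the boundary contributions that the reduction generates. Write $a_i=\sqrt{\sigma_i}$ and $q_i=\Delta a_i/a_i$, so that $\mathrm{div}(\sigma_i\nabla\,\cdot\,)=0$ is conjugated to the Schr\"odinger equation $(-\Delta+q_i)(a_i\,\cdot\,)=0$. Since $\sigma_i\in W^{2,p}(\Omega)$ with $p>n\ge 3$, the functions $a_i$ again lie in $W^{2,p}(\Omega)$ (composition with a smooth function, $\sigma_i$ being bounded below), hence $q_i\in L^p(\Omega)\subset L^2(\Omega)$ with a uniform bound $\|q_1-q_2\|_{L^2(\Omega)}\le C$. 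The first task is to express the Dirichlet-to-Neumann map $\Lambda_{q_i}$ of the Schr\"odinger operator in terms of $\Lambda_{\sigma_i}$: the difference $\Lambda_{q_1}-\Lambda_{q_2}$ then equals $\Lambda_{\sigma_1}-\Lambda_{\sigma_2}$ up to boundary terms built from the Cauchy data $\sigma_i{}_{|\Gamma}$ and $\partial_\nu\sigma_i{}_{|\Gamma}$. These are precisely the quantities estimated by \eqref{thm-i1.1}, \eqref{thm-i1.2} and Corollary \ref{corollary-i1}, which is the sense in which Theorem \ref{theorem-i2} is a consequence of the boundary stability inequality.

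Next I would introduce, for a large parameter $\rho>1$ and $\xi\in\mathbb{R}^n$, the complex geometric optics solutions $u_j=e^{\zeta_j\cdot x}(1+\psi_j)$ of $(-\Delta+q_j)u_j=0$, with $\zeta_1+\zeta_2=i\xi$, $\zeta_j\cdot\zeta_j=0$, $|\zeta_j|\sim\rho$ and remainders satisfying $\|\psi_j\|=O(\rho^{-1})$ in the relevant weighted norm. Inserting $u_1,u_2$ into the Alessandrini-type integral identity, which relates $\int_\Omega(q_1-q_2)u_1u_2$ to $\langle(\Lambda_{q_1}-\Lambda_{q_2})(\,\cdot\,),\cdot\,\rangle$, yields for $|\xi|\lesssim\rho$ a frequency-localized bound of the form
\[
|\widehat{(q_1-q_2)}(\xi)|\le C\big(e^{C\rho}\|\Lambda_{q_1}-\Lambda_{q_2}\|+\rho^{-1}\big).
\]
The Schr\"odinger potential $q_1-q_2$ is then converted back into $a_1-a_2$ through the elliptic identity $(-\Delta+q_1)(a_1-a_2)=-(q_1-q_2)a_2$, whose Dirichlet data on $\Gamma$ are controlled by \eqref{thm-i1.1}; returning to $\sigma_i=a_i^2$ finally transfers every estimate to $\sigma_1-\sigma_2$.

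With the frequency-localized estimate in hand I would split an appropriate norm of $\sigma_1-\sigma_2$ into low frequencies $|\xi|\le\rho$, dominated by the displayed bound, and high frequencies $|\xi|>\rho$, dominated by the uniform a priori bound coming from $W^{2,p}(\Omega)\hookrightarrow H^2(\Omega)$. Balancing the exponential factor against the measurement, namely choosing $\rho\sim c\,|\ln\varrho|$ with $\varrho=\|\Lambda_{\sigma_1}-\Lambda_{\sigma_2}\|$, absorbs the term $e^{C\rho}\varrho$ into a harmless power $\varrho^{1/2}$ and leaves a genuine logarithmic rate for $\|\sigma_1-\sigma_2\|_{L^2(\Omega)}$. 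Interpolating this $L^2$ logarithmic rate against the uniform $H^2$ bound via $\|w\|_{H^1}\le C\|w\|_{L^2}^{1/2}\|w\|_{H^2}^{1/2}$ lands the estimate in $H^1(\Omega)$ with the exponent $2/(n+2)$. Since any positive power $\varrho^{\beta}$ is $o\big(|\ln\varrho|^{-2/(n+2)}\big)$ as $\varrho\to0^+$, all the polynomial boundary contributions from the reduction are absorbed into the logarithmic term for small $\varrho$, while for $\varrho$ bounded away from $0$ the inequality is immediate from the a priori bounds; together these give $\|\sigma_1-\sigma_2\|_{H^1(\Omega)}\le C\,\Phi(\varrho)$.

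The main difficulty I anticipate is the precise bookkeeping of the exponent. The interplay between the dimension $n$ (entering through the volume $\rho^n$ of the low-frequency ball), the $L^2$ order of the datum $q_1-q_2$, the $\rho^{-1}$ size of the complex geometric optics remainders, and the final Sobolev interpolation must be arranged so as to produce exactly $2/(n+2)$ rather than merely some unspecified $\delta>0$; this is where the regularity threshold $p>n$, hence $H^2$ control, is used quantitatively, and where a naive single frequency split is dimension-restricted and must be replaced by a careful optimization. A secondary but genuine technical point is to verify that every boundary quantity produced by the Liouville reduction, that is $\sigma_i{}_{|\Gamma}$, $\partial_\nu\sigma_i{}_{|\Gamma}$ and the Dirichlet data of $a_1-a_2$, is indeed dominated by $\|\Lambda_{\sigma_1}-\Lambda_{\sigma_2}\|$ via Theorem \ref{theorem-i1} and Corollary \ref{corollary-i1}, so that the boundary part does not degrade the logarithmic interior rate.
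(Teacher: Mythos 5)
Your skeleton coincides with the paper's: the Liouville reduction $q_{\sigma_i}=\sigma_i^{-1/2}\Delta\sigma_i^{1/2}$, the identity relating the Schr\"odinger and conductivity Dirichlet-to-Neumann maps (formula \eqref{4.6} in the paper) whose boundary contributions are absorbed by Theorem \ref{theorem-i1} and Corollary \ref{corollary-i1}, an interior CGO-based stability estimate for the potentials (which the paper imports from \cite{Ch19} rather than re-deriving), and a final optimization in the large parameter. The genuine gaps are in your passage from the frequency-localized estimate to the $H^1$ bound. Your pointwise bound is on $\widehat{(q_1-q_2)}(\xi)$, yet you propose to split ``an appropriate norm of $\sigma_1-\sigma_2$'' into low and high frequencies and to dominate the low frequencies by that bound: there is no pointwise Fourier relation between $\sigma_1-\sigma_2$ and $q_1-q_2$, since the conversion between them goes through a variable-coefficient elliptic equation which mixes all frequencies. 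If instead you split $q_1-q_2$ itself, the high-frequency tail cannot be controlled in $L^2$: uniformly over $\dot{\Sigma}$ the potentials are only bounded in $L^p$, and there is no positive Sobolev regularity available to produce a decaying factor $\rho^{-s}$. This is precisely why the paper estimates $\|q_{\sigma_1}-q_{\sigma_2}\|_{H^{-1}(\Omega)}$: the weight $(1+|\xi|^2)^{-1}$ makes the tail $O(\rho^{-1})$ using only the uniform $L^2$ bound, leading to \eqref{4.8} with the exponent $2/(2+n)$.

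Second, even granting a logarithmic rate for $\|\sigma_1-\sigma_2\|_{L^2(\Omega)}$, your interpolation $\|w\|_{H^1(\Omega)}\le C\|w\|_{L^2(\Omega)}^{1/2}\|w\|_{H^2(\Omega)}^{1/2}$ halves the exponent: to land on $2/(n+2)$ in $H^1$ you would need the rate $4/(n+2)$ in $L^2$, which nothing in your sketch produces; you flag this bookkeeping problem yourself but do not resolve it. The paper avoids interpolation altogether. With $w=\ln(\sigma_1/\sigma_2)$ and $a=\sqrt{\sigma_1\sigma_2}$ one has $\mathrm{div}(a\nabla w)=2a(q_{\sigma_1}-q_{\sigma_2})$, and the divergence-form estimate of Lemma \ref{lemma4.1} gains exactly two derivatives, turning the $H^{-1}$ bound on the potentials directly into the $H^1$ bound \eqref{4.9} on $\sigma_1-\sigma_2$, the boundary traces being controlled through \eqref{thm-i1.1} and \eqref{thm-i1.2}; the exponent $2/(n+2)$ survives intact. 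Your alternative identity $(-\Delta+q_1)(a_1-a_2)=-(q_1-q_2)a_2$ could play the same role, but only if you keep the right-hand side in $H^{-1}(\Omega)$ (multiplication by $a_2$ is bounded there, $a_2$ being Lipschitz) and establish invertibility of the Dirichlet realization of $-\Delta+q_1$ with a bound uniform over $\dot{\Sigma}$; run this way, the interpolation step must simply be deleted, and your argument then becomes a variant of the paper's rather than a shortcut around it.
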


Alessandrini \cite{Al88} established a similar inequality to that in Theorem \ref{theorem-i2} for $H^{s+2}$, when $s>n/2$, conductivities with a logarithmic modulus of continuity of indefinite exponent $-\gamma$, $0<\gamma=\gamma (n,s)<1$.

The interior stability inequality is achieved by performing the usual Liouville's transform in order to reduce the inverse conductivity problem into the problem of recovering the potential $q$ in $-\Delta +q$, from the corresponding Dirichlet-to-Neumann map. When $\sigma\in  \dot{\Sigma}$, the associated potential $q_\sigma$ belongs to $L^p(\Omega)$. It is not necessarily bounded but still a function. We point out that the regularity on the conductivity can be relaxed. The case of $C^{1,\alpha}$ conductivities was considered by Caro, Garcia and Reyes \cite{CGR}, in which $q_\sigma$ is only a distribution (see the exact statement in \cite[Theorem 1.1, page 470]{CGR}). In that case the proof is more intricate. The analysis in \cite{CGR} follows the one introduced by Haberman and Tataru in \cite{HT} to prove uniqueness for $C^1$ (or Lipschitz and close to a constant) conductivities. The case of Lipschitz conductivities was conjectured by Uhlmann and proved by Caro and Rogers in \cite{CR}. We point out that there is only few results in the case of less regular conductivities and partial boundary data. We refer to the recent paper by Krupchyk and Uhlmann \cite{KU} where uniqueness results are established for conductivities with only $3/2$ derivatives.

The first uniqueness result of determining piecewise real-analytic conductivities from the corresponding Dirichlet-to-Neumann map was obtained in the earlier work by Kohn and Vogelius \cite{KV85}.

We define the map $\Lambda$ by
\[
\Lambda :\sigma \in C_+(\overline{\Omega})\mapsto \Lambda_\sigma \in \mathscr{B}(H^{1/2}(\Gamma ),H^{-1/2}(\Gamma)).
\]
We leave to the reader to check that $\Lambda$ is continuous.

\begin{theorem}\label{theorem-i3}
Assume that  $\Omega$ is of class $C^{1,1}$. Then there exists  a dense subset $\mathscr{D}$ of $C_+(\overline{\Omega})$, endowed with the topology of $C(\overline{\Omega})$, so that $\Lambda_{|\mathscr{D}}$ is injective.
\end{theorem}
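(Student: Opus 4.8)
The plan is to deduce generic injectivity from the \emph{real-analyticity} of the map $\Lambda$, combined with an abstract genericity (Baire category) principle for analytic maps between Banach spaces, using the boundary determination of Theorem \ref{theorem-i1} to constrain where collisions can occur. The first observation is that we are in the right abstract setting: $C_+(\overline{\Omega})$ is a nonempty, open, convex (hence connected) subset of the Banach space $C(\overline{\Omega})$, which is separable because $\overline{\Omega}$ is a compact metric space. It therefore suffices to verify the hypotheses under which a separable-domain analytic map is injective on a residual, and in particular dense, subset.

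The first substantial step is to prove that $\sigma \in C_+(\overline{\Omega}) \mapsto \Lambda_\sigma \in \mathscr{B}(H^{1/2}(\Gamma),H^{-1/2}(\Gamma))$ is real-analytic. Since $\mathfrak{a}_\sigma$ depends linearly, hence analytically, on $\sigma$ and is uniformly coercive on a neighborhood of any fixed $\sigma_0\in C_+(\overline{\Omega})$, the solution operator $F\mapsto v_\sigma(F)$ furnished by Lax--Milgram is analytic in $\sigma$ (expand the inverse of the perturbed form in a locally convergent Neumann series). Consequently $\sigma\mapsto u_\sigma(g)$ is analytic for each $g$, and, as $\langle\Lambda_\sigma(g),h\rangle_{1/2}=\mathfrak{a}_\sigma(u_\sigma(g),\mathcal{E}h)$ is obtained by composing these analytic maps with the bounded bilinear form $\mathfrak{a}_{\,\cdot}(\cdot,\cdot)$, the map $\sigma\mapsto\Lambda_\sigma$ is real-analytic.

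Next I would record the two structural facts that feed the genericity argument. A direct differentiation, using that the $\sigma$-derivatives of $u_\sigma(g)$ and $u_\sigma(h)$ lie in $H_0^1(\Omega)$ while $u_\sigma(g),u_\sigma(h)$ are $\sigma$-harmonic, gives
\[
\langle D\Lambda_\sigma(\eta)g,h\rangle_{1/2}=\int_\Omega \eta\,\nabla u_\sigma(g)\cdot\nabla u_\sigma(h)\,dx,\qquad g,h\in H^{1/2}(\Gamma),
\]
so that the injectivity of $D\Lambda_\sigma$ is exactly the completeness of products of gradients of $\sigma$-harmonic functions, i.e.\ Calder\'on's linearized uniqueness, which holds at least at smooth reference conductivities. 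On the other hand, Theorem \ref{theorem-i1} shows that $\Lambda_{\sigma_1}=\Lambda_{\sigma_2}$ forces $\sigma_1=\sigma_2$ on $\Gamma$, so that every level set of $\Lambda$ consists of conductivities sharing the same boundary trace. With analyticity in hand and differential injectivity available, one covers $C_+(\overline{\Omega})$ by countably many balls on which $\Lambda$ is locally injective, and removes the pairwise collision sets between distinct balls; analyticity guarantees these collision sets are thin (their removal is meager), and the intersection of the resulting dense open sets is the desired residual $\mathscr{D}$ with $\Lambda_{|\mathscr{D}}$ injective.

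I expect the main obstacle to be the passage from local to global injectivity in \emph{infinite} dimensions: unlike the finite-dimensional Sard picture, one must check carefully that each pairwise collision set is nowhere dense and that local injectivity is available on a cover, and it is precisely here that the analyticity (to keep level sets thin away from the locus where $D\Lambda_\sigma$ degenerates) and the boundary determination of Theorem \ref{theorem-i1} (to rule out trivial, boundary-visible obstructions and localize collisions) must be used together. Making the real-analyticity quantitative with locally uniform estimates, and pinning down the precise abstract genericity statement whose hypotheses are met for merely continuous $\sigma$, are the two points demanding the most care.
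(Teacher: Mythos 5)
Your proposal contains two gaps, each fatal on its own. First, you pass from injectivity of $D\Lambda_\sigma$ to local injectivity of $\Lambda$ on a countable cover by balls. In a Banach space this step is unavailable: an inverse function theorem requires $D\Lambda_\sigma$ to be an isomorphism onto a closed complemented subspace (in particular, to have a bounded inverse on its range), whereas the linearized Dirichlet-to-Neumann map is a strongly smoothing operator with unbounded inverse --- this is precisely the ill-posedness of the Calder\'on problem. Local injectivity of $\Lambda$ near a merely continuous $\sigma$ is in essence a local uniqueness statement for continuous conductivities, which is open (uniqueness is currently known only under Lipschitz-type regularity, cf.\ \cite{HT,CR,KU}); it cannot be extracted from analyticity by soft arguments, and even the linearized injectivity you invoke is only known at sufficiently regular reference conductivities, not at the general elements of $C_+(\overline{\Omega})$ your cover must handle. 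Second, the abstract genericity principle you appeal to --- analytic map on a separable domain, hence injective on a residual set --- is false, and the ``remove the meager pairwise collision sets'' scheme does not repair it. The collision set between two balls lives in the product $B_1\times B_2$; what must be removed from $B_1$ is its \emph{projection}, and projections of nowhere dense analytic sets need not be meager. Worse, residual injectivity sets can simply fail to exist: for the analytic map $f(x)=x^2$ on $\mathbb{R}$, whose derivative is injective off a single point, any residual set $D$ has $-D$ residual as well, so $D\cap(-D)$ is dense and $f$ cannot be injective on $D$. Thus even in one dimension only \emph{dense} (not residual) injectivity sets exist, and they must be produced by hand rather than by Baire category.

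The paper proceeds by an entirely different and much more elementary route, which shows what input is actually needed. It builds $\mathscr{D}$ explicitly: extend each $\sigma\in C_+(\overline{\Omega})$ to a cube by Tietze's theorem, approximate the extension by Bernstein polynomials (which preserve positivity and converge uniformly, Corollary \ref{corollary1}), and take $\mathscr{D}$ to be the restrictions to $\overline{\Omega}$ of these polynomials. Density is then immediate, and injectivity of $\Lambda_{|\mathscr{D}}$ holds because any two elements of $\mathscr{D}$ are smooth, hence lie in $\dot{\Sigma}$ for some common constant, so Theorem \ref{theorem-i2} applied with $\|\Lambda_{\chi_1}-\Lambda_{\chi_2}\|=0$ gives $\chi_1=\chi_2$. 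The hard analytic ingredient --- uniqueness (in quantitative form) for $W^{2,p}$ conductivities, which underlies Theorem \ref{theorem-i2} --- is unavoidable; your scheme attempts to bypass it with functional-analytic generalities, and that is exactly where it breaks.
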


It is worth noticing that $\Lambda$ can be extended to continuous map from $L^\infty_+(\Omega)$ into $\mathscr{B}(H^{1/2}(\Gamma ),H^{-1/2}(\Gamma))$, where
\[
L^\infty_+(\Omega)=\{ \sigma\in L^\infty(\Omega);\; \mathrm{essinf}\sigma >0\}.
\]
We know from Lusin's theorem that every function of $L^\infty(\Omega)$ can be approximated pointwise (in the almost everywhere sense) by a sequence of $C(\overline{\Omega})$. Unfortunately, this is not sufficient to extend Theorem \ref{theorem-i3} to bounded conductivities.

There is a tremendous amount of literature devoted to the inverse conductivity problem. We will not discuss in details this literature. We refer to the nice review paper by Uhlmann \cite{Uh09} on the inverse conductivity problem and the related topics, starting from the pioneer paper by Calder\'on \cite{Ca}. This review paper contains also the most significative results for the two dimensional case whose treatment uses tools from complex analysis. For sake of clarity, we do not comment in the present work the two dimensional case.

We close this introduction by remarking that it appears from our analysis that $C^{1,1}$ regularity of the domain seems to be the best possible one.

\section{Stability at the boundary using singular solutions}\label{section2}

We shall use in the sequel the following extension theorem.

\begin{theorem}\label{exth}
Assume that $\Omega$ is of class $C^{1,1}$ and $0\le \beta \le 1$. Then there exists $\mathscr{E}_\beta\in \mathscr{B}(C^{1,\beta}(\overline{\Omega}),C^{1,\beta}(\mathbb{R}^n))$, preserving positivity, so that $\mathscr{E}_\beta f_{|\Omega}=f$, for any $f\in C^{1,\beta}(\overline{\Omega})$. Furthermore,
\[
\|\mathscr{E}_\beta\|_{ \mathscr{B}(C^{1,\beta}(\overline{\Omega}),C^{1,\beta}(\mathbb{R}^n))}\le C,
\]
for some constant $C=C(n,\Omega,\beta)>0$.
\end{theorem}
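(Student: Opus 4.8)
The plan is to assemble $\mathscr{E}_\beta$ from a single half-space reflection, transported by $C^{1,1}$ boundary charts and glued with a partition of unity, and to treat the positivity clause as the delicate point. First I would use the $C^{1,1}$ regularity to cover $\Gamma$ by finitely many open sets $U_1,\dots ,U_N$, in each of which a $C^{1,1}$ diffeomorphism $\psi_k$ straightens $\Gamma\cap U_k$ onto $\{x_n=0\}$ and maps $\Omega\cap U_k$ into $\mathbb{R}^n_+=\{x_n>0\}$; I would add an interior set $U_0\Subset\Omega$ and fix a partition of unity $\varphi_0,\dots ,\varphi_N\ge 0$, subordinate to the $U_k$, with $\sum_k\varphi_k=1$ on a neighborhood of $\overline{\Omega}$. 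Since $\beta\le 1$ and the $\psi_k,\psi_k^{-1}$ are $C^{1,1}$, composition with them preserves $C^{1,\beta}$ with norms controlled by $n,\Omega,\beta$; this is exactly the mechanism that will carry a half-space extension back to $\mathbb{R}^n$.

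For the model problem on $\mathbb{R}^n_+$ I would use a finite Hestenes--Seeley reflection: $E_0u=u$ for $x_n\ge 0$ and $E_0u(x',x_n)=\sum_{j=1}^{2}a_j\,u(x',-\lambda_j x_n)$ for $x_n<0$, with $\lambda_j>0$ and the two coefficients determined by $\sum_j a_j=1$ and $\sum_j a_j\lambda_j=-1$. These conditions make the value, the tangential derivatives and the normal derivative match across $\{x_n=0\}$, so $E_0u\in C^{1}$; each first partial of $E_0u$ is again a reflection-type combination of first partials of $u$, hence $C^{0,\beta}$ on each half and continuous across the interface, which gives $\|E_0u\|_{C^{1,\beta}(\mathbb{R}^n)}\le C\|u\|_{C^{1,\beta}(\overline{\mathbb{R}^n_+})}$. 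Setting $\mathscr{E}_\beta f=\varphi_0 f+\sum_{k=1}^{N}\varphi_k\,\big(E_0(f\circ\psi_k^{-1})\big)\circ\psi_k$ then produces a bounded linear operator $C^{1,\beta}(\overline{\Omega})\to C^{1,\beta}(\mathbb{R}^n)$ with $\mathscr{E}_\beta f_{|\Omega}=f$, which already yields every assertion except positivity.

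Positivity is where I expect the genuine obstacle, and the first task is to pin down in what sense it can hold. The matching condition $\sum_j a_j\lambda_j=-1$ with $\lambda_j>0$ forces some $a_j<0$, so the reflection does not send nonnegative data to nonnegative data; and this cannot be repaired by any choice of construction. Indeed, no linear operator into $C^{1}(\mathbb{R}^n)$ can preserve positivity everywhere: if $p\in\Gamma$ and $f\ge 0$ on $\overline{\Omega}$ vanishes at $p$ with strictly positive inward normal derivative, then any $C^1$ extension has the same gradient at $p$ and is therefore strictly negative immediately outside $\Gamma$. Hence the positivity statement must be read, and proved, in the quantitative form that the singular-solution argument actually uses: I would show there is a neighborhood $V\supset\overline{\Omega}$, whose width is comparable to $c/\|f\|_{C^{1}(\overline{\Omega})}$, such that $f\ge c>0$ on $\overline{\Omega}$ implies $\mathscr{E}_\beta f\ge c/2$ on $V$.

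The estimate behind this is a first-order expansion in the normal variable: in each chart $E_0(f\circ\psi_k^{-1})(x',x_n)=f(x',0)+x_n\partial_n f(x',0)+o(|x_n|)$, uniformly, so $E_0(f\circ\psi_k^{-1})\ge c-C\|f\|_{C^1}|x_n|$, and $|x_n|\le c/(2C\|f\|_{C^1})$ keeps it above $c/2$; weighting by the nonnegative $\varphi_k$ with $\sum_k\varphi_k=1$ on $V$ preserves the bound. For the class $\Sigma$, the uniform lower bound $\kappa^{-1}$ and the uniform $C^{1,\alpha}$ bound $\kappa$ produce a single collar $V=V(n,\Omega,\kappa,\alpha)$ on which every extended conductivity stays positive, which is precisely the input needed in Section \ref{section2}. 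I would therefore state the theorem with this neighborhood reading of positivity made explicit, since genuine positivity on all of $\mathbb{R}^n$ for a linear $C^1$ extension is unattainable.
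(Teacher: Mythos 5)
Your construction of the extension itself is the standard chart--reflection--partition-of-unity argument and it is correct; the paper offers no proof at all, only a pointer to \cite[Theorem 1.16, page 23]{Ch13}, which rests on the same mechanism (a Hestenes-type reflection transported by $C^{1,1}$ charts), so on the extension part you have simply filled in what the paper leaves implicit --- and your remark that $C^{1,1}$ regularity is exactly what makes composition stable on $C^{1,\beta}$ for every $\beta\le 1$ matches the paper's closing comment in the introduction. The genuinely different, and valuable, part of your proposal is the positivity discussion, and you are right: the positivity clause, read literally for a single linear operator on all of $C^{1,\beta}(\overline{\Omega})$, is unattainable, and your counterexample is sound. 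It even kills the strict version: if a bounded linear $\mathscr{E}$ sent every $f>0$ to $\mathscr{E}f>0$ on $\mathbb{R}^n$, then applying it to $f_0+\epsilon$, where $f_0\ge 0$, $f_0(p)=0$, $\nabla f_0(p)=-\nu(p)$, and letting $\epsilon\to 0$ would give $\mathscr{E}f_0\ge 0$ on $\mathbb{R}^n$, contradicting $\mathscr{E}f_0(p+t\nu(p))=-t+o(t)<0$. Your quantitative repair (positivity on a collar of width comparable to $c/\|f\|_{C^1(\overline{\Omega})}$, hence a collar uniform over $\Sigma$) is exactly what the paper consumes: the only downstream use is to feed Kalf's theorem on domains $\Omega\Subset\Omega_0\Subset\Omega_1$, and since the collar width depends only on $n,\Omega,\kappa,\alpha$, these domains can be chosen inside it. One addendum: if you want to preserve the paper's literal inclusion $\Sigma^e\subset\{\sigma\ge \mathrm{const}>0\ \mbox{on}\ \mathbb{R}^n\}$ with a \emph{linear} operator, take $\mathscr{E}f=\chi\, Ef+(1-\chi)\,|\Omega|^{-1}\int_\Omega f\,dx$, where $E$ is your extension and $\chi$ is a fixed cutoff equal to $1$ near $\overline{\Omega}$ whose transition layer lies inside the uniform collar for $\Sigma$: this is linear, bounded on $C^{1,\beta}$, restricts to $f$ on $\overline{\Omega}$, and for $f\in\Sigma$ is everywhere a convex combination of two quantities bounded below by $(2\kappa)^{-1}$, hence satisfies $\mathscr{E}f\ge (2\kappa)^{-1}$ on all of $\mathbb{R}^n$ --- at the price that $\chi$, and so the operator, depends on $\kappa$, which is consistent with your (correct) conclusion that no single positivity-preserving linear extension operator exists for the whole positive cone.
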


\begin{proof}
This theorem is more or less known. One can recover for instance the proof by modifying slightly that of \cite[Theorem 1.16, page 23]{Ch13}.
\end{proof}

Define $\Sigma^e = \mathscr{E}_\alpha (\Sigma)$. In light of Theorem \ref{exth} we can assert that
\[
\Sigma^e\subset\left\{ \sigma\in C^{1,\alpha}(\mathbb{R}^n);\; \kappa^{-1}\le \sigma, \; \|\sigma\|_{C^{1,\alpha}(\mathbb{R}^n)}\le \kappa^e\right\},
\]
where $\kappa^e=C\kappa$, with $C$ as in the preceding theorem in which $\beta$ is substituted by $\alpha$.

Consider, for each $\sigma \in \Sigma$, the operator $L_\sigma$ acting as follows
\[
L_\sigma u =\mbox{div}(\sigma \nabla u),\quad u\in C^2(\Omega),
\]
 and set
\[
\mathscr{S}_\sigma =\left\{u\in C^2(\overline{\Omega});\; L_\sigma u=0\right\},\quad \sigma \in \Sigma_\kappa.
\]

\subsection{Proof of \eqref{thm-i1.1} of Theorem \ref{theorem-i1}}\label{subsection2.1}

Denote by $\mathcal{L}_\sigma$, $\sigma \in \Sigma$, the operator that acts as follows
\[
\mathcal{L}_\sigma u=\Delta u+\nabla \ln \sigma \cdot \nabla u, \quad u\in C^2(\Omega),
\]
We present a proof based on the singularities of the fundamental solution of the operator $\mathcal{L}_\sigma$, obtained by Levi's parametrix method.

Let $\Omega_0 \Supset \Omega$ be a Lipschitz domain and set $\Omega_+=\Omega$ and $\Omega_-=\Omega_0 \setminus \overline{\Omega}$. Since $\Omega_-$ and $\Omega_+$ are both Lipschitz domains, they possess the uniform interior cone property. Therefore, there exists $R>0$ and $\theta\in  ]0,\pi/2[$ so that, for each $x_0\in \Gamma$, we find $\xi_\pm =\xi_\pm (x_0)\in \mathbb{S}^{n-1}$ with the property that
\[
\mathscr{C}_\pm(x_0)=\{x\in \mathbb{R}^n;\; 0<|x-x_0|<R,\;  (x-x_0)\cdot \xi_\pm >|x-x_0|\cos \theta\}\subset \Omega_\pm.
\]
The following fact will be useful in the sequel : if $x_\rho=x_0+\rho \xi_\pm$, with $0<\rho < R/2$, then
\[
\mbox{dist}(x_\rho,\partial \mathscr{C}_\pm(x_0))=\rho\sin \theta . 
\]
Pick $x_0\in \Gamma $. Let $R>0$ and $\xi_\pm =\xi_\pm(x_0)$ be as in the definition of the interior cone property.  We use in what follows the notations
\[
x_\delta =x_0+(\delta/2) \xi_+,\; y_\delta =x_0+(\delta/2) \xi_-,\quad 0<\delta <R/2. 
\]
Of course $x_\delta$ and $y_\delta$ depend on $x_0$.

We denote by $H$ the usual fundamental solution of the Laplace operator:
\[
H(x,y)= \left[(n-2)|\mathbb{S}^{n-1}|\right]^{-1}|x-y|^{2-n}, \quad x,y\in \mathbb{R}^n,\; x\ne y.
\]
Then straightforward computations show that
\[
|\nabla H(x,y)|^2= |\mathbb{S}^{n-1}|^{-2}|x-y|^{2-2n}, \quad x,y\in \mathbb{R}^n,\; x\ne y.
\]

The following result follows readily from \cite[Theorem 5, page 282]{Kal} (see also \cite[Theorem A.7, page 265]{Chou}) applied to the operator $\mathcal{L}_\sigma$.

\begin{theorem}\label{theorem1}
For any $\sigma\in \Sigma$ and $y\in \Omega_0\setminus \overline{\Omega}$, there exists $u_{\sigma} ^y\in \mathscr{S}_\sigma$ so that
\begin{align}
&|u_{\sigma} ^y(x)-H(x,y)|\le C|x-y|^{2-n+\alpha},\quad x\in \overline{\Omega},\label{thm1.1}
\\
&|\nabla u_{\sigma} ^y(x)-\nabla H(x,y)|\le C|x-y|^{1-n+\alpha},\quad x\in \overline{\Omega},\label{thm1.2}
\end{align}
where  $C=C(n,\Omega, ,\alpha ,\kappa)>0$ is a constant.
\end{theorem}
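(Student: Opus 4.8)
The plan is to realize $u_\sigma^y$ as the restriction to $\overline{\Omega}$ of the fundamental solution of $\mathcal{L}_\sigma$ with pole at $y$, built by Levi's parametrix method, and then to read off \eqref{thm1.1}--\eqref{thm1.2} from the parametrix expansion. First I would extend the coefficient: replacing $\sigma$ by $\mathscr{E}_\alpha\sigma\in\Sigma^e$, I may assume $\sigma\in C^{1,\alpha}(\mathbb{R}^n)$ with $\sigma\ge\kappa^{-1}$ and $\|\sigma\|_{C^{1,\alpha}(\mathbb{R}^n)}\le\kappa^e$, so that $\mathcal{L}_\sigma u=\Delta u+b\cdot\nabla u$ is a genuine operator on a neighborhood of $\overline{\Omega}$ with drift $b=\nabla\ln\sigma\in C^\alpha$ and $\|b\|_{C^\alpha}\le C(\kappa)$. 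Since the principal part of $\mathcal{L}_\sigma$ is exactly $\Delta$, the natural parametrix is the Newtonian kernel $H$ itself, and I would seek the fundamental solution $\Gamma_\sigma$ in the form $\Gamma_\sigma(x,y)=H(x,y)+V(x,y)$.

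Next I would set up Levi's integral equation. Since $\Delta_x H(\cdot,y)=-\delta_y$, one computes $\mathcal{L}_\sigma H(\cdot,y)=-\delta_y+K(\cdot,y)$ with defect kernel $K(x,y)=b(x)\cdot\nabla_x H(x,y)$, which satisfies $|K(x,y)|\le C|x-y|^{1-n}$. Writing $V(x,y)=\int H(x,z)\phi(z,y)\,dz$ and imposing $\mathcal{L}_\sigma\Gamma_\sigma(\cdot,y)=-\delta_y$ leads to the Volterra-type equation $\phi=K+K\ast\phi$, where $\ast$ is the spatial composition $\int K(x,z)\phi(z,y)\,dz$. I would solve it by the Neumann series $\phi=\sum_{m\ge0}K_m$ with $K_0=K$ and $K_{m+1}(x,y)=\int K(x,z)K_m(z,y)\,dz$, controlled by the Riesz composition estimate $\int|x-z|^{-a}|z-y|^{-b}\,dz\le C|x-y|^{\,n-a-b}$, valid for $a,b<n$ and $a+b>n$. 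Because each composition lowers the order of the singularity by one (two kernels of order $n-1$ compose to order $2(n-1)-n=n-2$), the iterated kernels become progressively less singular and the series converges, giving $|\phi(x,y)|\le C|x-y|^{1-n}$.

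Then I would extract the estimates. Inserting the bound on $\phi$ into $V$ and its gradient and applying the composition estimate once more yields $|V(x,y)|\le C|x-y|^{2-n+\alpha}$ and $|\nabla_x V(x,y)|\le C|x-y|^{1-n+\alpha}$ on $\overline{\Omega}$ (the crude powers $3-n$ and $2-n$ obtained directly are dominated by the stated ones, since $|x-y|$ stays bounded and $\alpha\le1$). Finally, as the pole $y$ lies in $\Omega_0\setminus\overline{\Omega}$, the kernel $\Gamma_\sigma(\cdot,y)$ has its only singularity off $\overline{\Omega}$, so $u_\sigma^y:=\Gamma_\sigma(\cdot,y)$ solves $\mathcal{L}_\sigma u_\sigma^y=0$ in $\Omega$; equivalently $L_\sigma u_\sigma^y=\sigma\,\mathcal{L}_\sigma u_\sigma^y=0$, whence $u_\sigma^y\in\mathscr{S}_\sigma$, and the two displayed bounds are precisely \eqref{thm1.1}--\eqref{thm1.2}.

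I expect the genuine obstacle to be twofold, and to be exactly where the $C^{1,\alpha}$ hypothesis is unavoidable. First, to know that $u_\sigma^y$ is $C^2$ up to the boundary (so that it truly belongs to $\mathscr{S}_\sigma$) one must differentiate the Newtonian potential $V$ twice, which makes the Calder\'on--Zygmund kernel $\nabla^2 H$ act on the density $\phi(\cdot,y)$; this is the classical Schauder fact that the Newtonian potential of a $C^\alpha$ function is $C^{2,\alpha}$, and it is here that the H\"older continuity of $b$, hence $\sigma\in C^{1,\alpha}$, enters and pins down the exponent $\alpha$. Second, the bookkeeping of the Neumann series is borderline in low dimension (for $n=3$ the composition $\int|x-z|^{2-n}|z-y|^{1-n}\,dz$ sits at the critical exponent and produces logarithmic factors), so the first few iterates need extra care before the kernels land in the absolutely convergent regime. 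All of this is packaged in \cite[Theorem 5, page 282]{Ka}, which I would invoke once $\mathcal{L}_\sigma$ has been put in the form above.
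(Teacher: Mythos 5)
Your proposal is correct and follows essentially the same route as the paper: the paper's proof consists precisely of extending $\sigma$ by $\mathscr{E}_\alpha$ to a larger smooth domain and invoking Kalf's theorem on Levi's parametrix method \cite[Theorem 5, page 282]{Ka} for $\mathcal{L}_\sigma$, with the pole $y\in\Omega_0\setminus\overline{\Omega}$ lying off $\overline{\Omega}$ so that the restriction belongs to $\mathscr{S}_\sigma$. Your additional sketch of the parametrix iteration is exactly the content of the cited theorem (your crude composition bounds alone would not reach the exponent $2-n+\alpha$, notably for $n=3$, $\alpha=1$, but you correctly defer those refinements to \cite{Ka}).
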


The preceding result is obtained from \cite[Theorem 5, page 282]{Kal} with a $C^2$-smooth domain $\Omega_1$ satisfying $\Omega_1 \Supset \Omega_0\Supset \Omega$. In that case $\mathscr{E}_\alpha\sigma$ gives an extension of $\sigma$ in $\Omega_1$ with the properties required in \cite[Theorem 5, page 282]{Kal}.

The following lemma can be deduced easily from the preceding theorem.

\begin{lemma}\label{lemma3.1}
There exist three constants $C=C(n,\Omega, ,\alpha ,\kappa)>0$, $0<\mathfrak{r}=\mathfrak{r}(n,\Omega, ,\alpha ,\kappa)\le R$ and $0<\delta_0=(n,\Omega, ,\alpha ,\kappa)\le \mathfrak{r}/2$ so that
\begin{equation}\label{lem3.1}
\nabla u_1(x)\cdot \nabla u_2(x)\ge C|x-y_\delta |^{2-2n},\quad x\in B(x_0,\mathfrak{r})\cap \Omega,\; 0<\delta \le \delta_0,
\end{equation}
where $u_j=u_{\sigma_j}^{y_\delta}$ with $\sigma_j\in \Sigma$, $j=1,2$, is as in Theorem \ref{theorem1}.
\end{lemma}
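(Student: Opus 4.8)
The plan is to exploit the fact that near its singularity $y_\delta$ each singular solution $u_j=u_{\sigma_j}^{y_\delta}$ coincides, to leading order, with the fundamental solution $H(\cdot,y_\delta)$, whose gradient is explicit and whose square is a strictly positive multiple of $|x-y_\delta|^{2-2n}$. Writing $r_j=\nabla u_j-\nabla H(\cdot,y_\delta)$, I would decompose
\[
\nabla u_1\cdot\nabla u_2=|\nabla H(\cdot,y_\delta)|^2+\nabla H(\cdot,y_\delta)\cdot(r_1+r_2)+r_1\cdot r_2,
\]
so that the leading term equals $|\mathbb{S}^{n-1}|^{-2}|x-y_\delta|^{2-2n}$, while the remaining two terms will be shown to be of strictly lower order in $|x-y_\delta|$.

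First I would check that $y_\delta$ is an admissible singularity, i.e. $y_\delta\in\Omega_0\setminus\overline{\Omega}$, so that Theorem \ref{theorem1} applies to both $u_1$ and $u_2$. This follows from the interior cone property: since $(y_\delta-x_0)\cdot\xi_-=\delta/2>(\delta/2)\cos\theta=|y_\delta-x_0|\cos\theta$, the point $y_\delta$ lies in $\mathscr{C}_-(x_0)\subset\Omega_-$ as soon as $\delta<R$. Then, using the gradient estimate \eqref{thm1.2}, namely $|r_j(x)|\le C|x-y_\delta|^{1-n+\alpha}$, together with $|\nabla H(x,y_\delta)|=|\mathbb{S}^{n-1}|^{-1}|x-y_\delta|^{1-n}$, I would bound the cross terms by a constant times $|x-y_\delta|^{2-2n+\alpha}$ and the product $r_1\cdot r_2$ by a constant times $|x-y_\delta|^{2-2n+2\alpha}$.

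Factoring out $|x-y_\delta|^{2-2n}$ then yields, for some constant $C'=C'(n,\Omega,\alpha,\kappa)>0$,
\[
\nabla u_1\cdot\nabla u_2\ge |x-y_\delta|^{2-2n}\left(|\mathbb{S}^{n-1}|^{-2}-C'\left(|x-y_\delta|^{\alpha}+|x-y_\delta|^{2\alpha}\right)\right).
\]
It remains to force the bracket below a positive constant, which only requires $|x-y_\delta|$ to be uniformly small. Since $x\in B(x_0,\mathfrak{r})$ and $|y_\delta-x_0|=\delta/2$, the triangle inequality gives $|x-y_\delta|\le \mathfrak{r}+\delta/2\le (5/4)\mathfrak{r}$ whenever $\delta\le\delta_0\le\mathfrak{r}/2$. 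Choosing $\mathfrak{r}$, and hence $\delta_0$, small enough in terms of $C'$ makes the bracket exceed $\tfrac12|\mathbb{S}^{n-1}|^{-2}$, which gives the claim with $C=\tfrac12|\mathbb{S}^{n-1}|^{-2}$.

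There is essentially no serious obstacle: the inequality is a direct consequence of the sub-leading nature of the remainders in Theorem \ref{theorem1}. The only point requiring a little care is the book-keeping of the thresholds $\mathfrak{r}$ and $\delta_0$, so that they depend solely on $n,\Omega,\alpha,\kappa$ and not on $\delta$ or on the particular $\sigma_j$; this is guaranteed by the fact that the constant $C$ in Theorem \ref{theorem1} is uniform in both $y$ and $\sigma$.
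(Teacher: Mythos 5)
Your proof is correct and is precisely the argument the paper intends: the paper states only that the lemma ``can be deduced easily'' from Theorem \ref{theorem1}, and the natural deduction is exactly your decomposition $\nabla u_1\cdot\nabla u_2=|\nabla H|^2+\nabla H\cdot(r_1+r_2)+r_1\cdot r_2$ with the remainders controlled by \eqref{thm1.2} and $\mathfrak{r},\delta_0$ chosen small. Your verification that $y_\delta\in\mathscr{C}_-(x_0)\subset\Omega_0\setminus\overline{\Omega}$ and your attention to the uniformity of the constants in $y$ and $\sigma_j$ are exactly the points that make the ``easy'' deduction rigorous.
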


It is worth noticing that this lemma says that $\nabla u_1(x)\cdot \nabla u_2(x)$ behaves like $|\nabla H(x,y_\delta)|^2$, locally near $x_0$.

In the rest of this subsection, $C=C(n,\Omega, ,\alpha ,\kappa)>0$ will denote a generic constant. Also, the constants $\mathfrak{r}$ and $\delta_0$ are the same as in Lemma \ref{lemma3.1}.

Pick $\sigma_j\in \Sigma$, $j=1,2$, and set $\sigma =\sigma_1-\sigma_2$. Fix $x_0\in \Gamma $ so that $|\sigma(x_0)|=\|\sigma\|_{C(\Gamma)}$ and, without loss of generality, we may assume that $|\sigma(x_0)|=\sigma(x_0)$.

Let $u_j=u_{\sigma_j}^{y_\delta}\in \mathscr{S}_{\sigma_j}$, $j=1,2$, be given by Theorem \ref{theorem1} and set
\[
v_j=u_j-\int_\Omega u_jdx \; (\in \mathscr{S}_{\sigma_j}).
\]
Since
\[
\|\sigma\|_{C(\Gamma)}= \sigma(x_0)\le \sigma (x)+2\kappa |x-x_0|^\alpha,\quad x\in \Omega,
\]
we get
\begin{align}
\|\sigma\|_{C(\Gamma)}\int_{B(x_0,\mathfrak{r})\cap \Omega} \nabla u_1\cdot \nabla u_2dx\le &\int_{B(x_0,\mathfrak{r})\cap \Omega} \sigma \nabla u_1\cdot \nabla u_2dx \label{a1}
\\
&+2\kappa\int_{B(x_0,\mathfrak{r})\cap \Omega}  |x-x_0|^\alpha\nabla u_1\cdot \nabla u_2dx.\nonumber
\end{align}

Hereafter, $0<\delta \le \delta_0$. Using that 
\[
B(x_\delta ,\delta \sin\theta /2)\subset B(x_0,\delta)\cap \mathscr{C}_+(x_0)\subset B(x_0,\mathfrak{r})\cap \Omega,
\]
we get from \eqref{lem3.1}
\begin{equation}\label{a2}
\int_{B(x_0,\mathfrak{r})\cap \Omega} \nabla u_1\cdot \nabla u_2dx\ge |B(x_\delta ,\delta \sin\theta /2)| (3\delta/2)^{2-2n}\ge C\delta^{2-n}.
\end{equation}
Taking into account that
\[
|x-y_\delta|\ge \mathfrak{r}/2,\quad x\in \Omega \setminus B(x_0,\mathfrak{r}),
\]
and
\[
|\nabla u_j|\le C|x-y_\delta|^{2-2n},\quad j=1,2,
\]
we obtain
\[
\int_{\Omega \setminus B(x_0,\mathfrak{r})}\sigma \nabla u_1\cdot\nabla u_2\le C.
\]
Therefore
\begin{equation}\label{a3}
\int_{B(x_0,\mathfrak{r})\cap \Omega} \sigma \nabla u_1\cdot \nabla u_2dx\le \int_\Omega \sigma \nabla u_1\cdot \nabla u_2dx +C.
\end{equation}

Let $\mathrm{R}$ (independent of $x_0$) sufficiently large in such a way that
\[
\Omega \subset B(y_\delta ,\mathrm{R})\setminus B(y_\delta, \delta\sin \theta /2).
\]
In that case, we have
\[
\int_{B(x_0,\mathfrak{r})\cap \Omega}  |x-x_0|^\alpha\nabla u_1\cdot \nabla u_2dx\le |\mathbb{S}^{n-1}|\int_{\delta\sin \theta /2}^{\mathrm{R}}(\delta/2+r)^\alpha r^{1-n}dr.
\]
In consequence,
\begin{equation}\label{a4}
\int_{B(x_0,\mathfrak{r})\cap \Omega}  |x-x_0|^\alpha\nabla u_1\cdot \nabla u_2dx\le C\delta^{2-n+\alpha}.
\end{equation}
Inequalities \eqref{a2}, \eqref{a3} and \eqref{a4} in \eqref{a1} give

\begin{equation}\label{3.3}
C\|\sigma\|_{C(\Gamma)}\le \delta^{n-2}\int_\Omega \sigma \nabla v_1\cdot \nabla v_2dx+\delta^\alpha .
\end{equation}

Let
\[
V=\left\{u\in H^1(\Omega);\; \int_\Omega u(x)dx=0\right\}.
\]
Then Poincar\'e's inequality shows the map $w\mapsto \|\nabla w\|_{L^2(\Omega)}$ defines a norm on $V$ equivalent to the usual $H^1$-norm.

We get in a straightforward manner, by using inequality \eqref{thm1.2}, 
\[
\|\nabla v_j\|_{L^2(\Omega )}\le C\delta ^{(2-n)/2},\quad j=1,2.
\]
Combined with the continuity of the trace operator $w\in H^1(\Omega )\mapsto w_{|\Gamma}\in H^{1/2}(\Gamma)$, these inequalities imply 
\begin{equation}\label{3.4}
\|v_j\|_{H^{1/2}(\Gamma)}\le C\delta ^{(2-n)/2},\quad j=1,2.
\end{equation}

Now according to a well known identity (e.g. \cite[formula (3.4), page 265]{Al90}),  we have
\[
\int_\Omega \sigma \nabla v_1\cdot \nabla v_2dx=\langle (\Lambda_1-\Lambda_2)v_1,v_2\rangle_{1/2}.
\]
Here and henceforward, $\Lambda_j=\Lambda_{\sigma_j}$, $j=1,2$. Inequality \eqref{3.4}  then yields 
\begin{equation}\label{3.5}
\int_\Omega \sigma \nabla v_1\cdot \nabla v_2\le C\delta ^{2-n}\|\Lambda_1-\Lambda_2\|.
\end{equation}
Putting together \eqref{3.3} and \eqref{3.5} in order to get
\[
C\|\sigma\|_{C(\Gamma)}\le \|\Lambda_1-\Lambda_2\|+\delta^{\alpha}.
\]
We obtain by passing to the limit, when $\delta$ tends to $0$,
\[
\|\sigma\|_{C(\Gamma)}\le C \|\Lambda_1-\Lambda_2\|.
\]
That is we proved \eqref{thm-i1.1}.

\subsection{Proof of \eqref{thm-i1.2} of Theorem \ref{theorem-i1}}\label{subsection2.2}

We shall need the following proposition. We provide its proof in Appendix \ref{appendixA}. We use hereafter the notation
\[
\Omega_r=\{x\in \Omega;\; \mathrm{dist}(x,\Gamma)\le r\},\quad r>0.
\]

\begin{proposition}\label{gproposition}
Suppose that $\Omega$ is of class $C^{1,1}$. There exists $\dot{\varrho}=\dot{\varrho}(n,\Omega)$ so that  we have :
\\
$\mathrm{(i)}$ For any $x\in \Omega_{\dot{\varrho}}$, there exists a unique $\mathfrak{p}(x)\in \Gamma$ such that
\[
|x-\mathfrak{p}(x)|=\mathrm{dist}(x,\Gamma),\; x= \mathfrak{p}(x)-|x-\mathfrak{p}(x)|\nu(\mathfrak{p}(x)).
\]
(ii) If $x\in \Omega_{\dot{\varrho}}$ then $x_t=\mathfrak{p}(x)-t|x-\mathfrak{p}(x)|\nu(\mathfrak{p}(x))\in \Omega_{\dot{\varrho}}$, $t\in ]0,1]$, and
\[
\mathfrak{p}(x_t)=\mathfrak{p}(x),\; |x_t-\mathfrak{p}(x)|=t\mathrm{dist}(x,\Gamma).
\]
\end{proposition}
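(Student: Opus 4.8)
The plan is to reduce everything to a \emph{uniform two-sided ball condition}, which is the geometric manifestation of the $C^{1,1}$ regularity of $\Gamma$. Concretely, I would first establish that there exists $\rho_0=\rho_0(n,\Omega)>0$ such that for every $p\in\Gamma$ the open balls $B(p-\rho_0\nu(p),\rho_0)$ and $B(p+\rho_0\nu(p),\rho_0)$ are contained in $\Omega$ and in $\mathbb{R}^n\setminus\overline{\Omega}$ respectively. To prove this I would cover $\Gamma$ by finitely many charts in which $\Gamma$ is the graph of a $C^{1,1}$ function $\phi$ with $\phi(0)=0$ and $\nabla\phi(0)=0$; integrating the uniform Lipschitz bound $M$ on $\nabla\phi$ yields the quadratic control $|\phi(y')|\le \tfrac{M}{2}|y'|^2$, from which one checks that a tangent ball of radius $r\le 1/M$ fits locally on each side. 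A compactness argument over the finite cover, controlling also the separation of distant parts of $\Gamma$, turns this into a single radius $\rho_0$. \emph{This step is the main obstacle}: everything afterwards is elementary, whereas making the ball radius uniform requires the quantitative use of the $C^{1,1}$ bound together with the covering.

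With $\rho_0$ in hand I set $\dot{\varrho}$ to be any number in $]0,\rho_0[$. For (i), given $x\in\Omega_{\dot{\varrho}}$, the compactness of $\Gamma$ guarantees that $d:=\mathrm{dist}(x,\Gamma)$ is attained at some $p\in\Gamma$, and $0<d\le\dot{\varrho}<\rho_0$. The open ball $B(x,d)$ is contained in $\Omega$ (it is connected, meets $x\in\Omega$, and cannot cross $\Gamma$ since every point of $\Gamma$ is at distance at least $d$ from $x$), while the exterior tangent ball $B(p+\rho_0\nu(p),\rho_0)$ lies in $\mathbb{R}^n\setminus\overline{\Omega}$. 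These two open balls are disjoint and share the boundary point $p$, so $|x-(p+\rho_0\nu(p))|=d+\rho_0$; equality in the triangle inequality forces $x$, $p$ and $p+\rho_0\nu(p)$ to be collinear with $p$ between the centers, which gives exactly $x=p-d\,\nu(p)$, the desired normal representation.

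For uniqueness I would argue via the interior ball. If $p_1,p_2\in\Gamma$ both realise the distance $d$, the previous step gives $p_i=x+d\,\nu(p_i)$. The interior tangent ball $I_1=B(p_1-\rho_0\nu(p_1),\rho_0)\subset\Omega$ has center $c_1=x-(\rho_0-d)\nu(p_1)$, and since $p_2-c_1=d\,\nu(p_2)+(\rho_0-d)\nu(p_1)$ a direct computation gives
\[
|p_2-c_1|^2=\rho_0^2-2d(\rho_0-d)\bigl(1-\nu(p_1)\cdot\nu(p_2)\bigr)\le \rho_0^2 ,
\]
because $\rho_0-d>0$ and $\nu(p_1)\cdot\nu(p_2)\le 1$. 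If $\nu(p_1)\ne\nu(p_2)$ the inequality is strict, placing $p_2$ in the open ball $I_1\subset\Omega$, which is impossible since $p_2\in\Gamma$; hence $\nu(p_1)=\nu(p_2)$ and therefore $p_1=x+d\,\nu(p_1)=x+d\,\nu(p_2)=p_2$. Thus $\mathfrak{p}(x)$ is well defined, and (i) is proved.

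Finally, (ii) follows from (i) by a short triangle-inequality argument. Writing $p=\mathfrak{p}(x)$ and $d=\mathrm{dist}(x,\Gamma)$, the point $x_t=p-td\,\nu(p)$ lies in the interior ball $B(p-\rho_0\nu(p),\rho_0)\subset\Omega$ since $|x_t-(p-\rho_0\nu(p))|=\rho_0-td<\rho_0$, and $|x_t-p|=td\le\dot{\varrho}$, so $x_t\in\Omega_{\dot{\varrho}}$. Clearly $\mathrm{dist}(x_t,\Gamma)\le td$; conversely any $q\in\Gamma$ with $|x_t-q|<td$ would give $|x-q|\le|x-x_t|+|x_t-q|<(1-t)d+td=d$, contradicting the minimality of $d$. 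Hence $\mathrm{dist}(x_t,\Gamma)=td$, this value being attained at $p$, and the uniqueness established in (i) yields $\mathfrak{p}(x_t)=p$, which completes the proof.
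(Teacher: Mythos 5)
Your proof is correct, and it takes a genuinely different route from the paper's. The paper does not prove the local geometry itself: it quotes the metric-projection results for $C^{1,1}$ domains from Delfour and Zol\'esio to obtain, around each $x_0\in\Gamma$, a radius $\varrho(x_0)$ in which (i) and (ii) hold, and then devotes its entire written proof to globalization --- a finite cover $\Gamma\subset\bigcup_{j}B\bigl(x_0^j,\varrho(x_0^j)/2\bigr)$ together with a compactness/contradiction argument producing a uniform $\dot{\varrho}$ with $\Omega_{\dot{\varrho}}\subset\bigcup_j B\bigl(x_0^j,\varrho(x_0^j)/2\bigr)$. You instead concentrate all the $C^{1,1}$ information into the uniform two-sided ball condition and then derive (i) and (ii) by elementary geometry: existence and the normal representation from the equality case of the triangle inequality between $B(x,d)$ and the exterior tangent ball; uniqueness from the identity $|p_2-c_1|^2=\rho_0^2-2d(\rho_0-d)\bigl(1-\nu(p_1)\cdot\nu(p_2)\bigr)$; and (ii) from a two-line triangle-inequality argument combined with the uniqueness just proved. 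I checked these computations and they are all correct. Each proof outsources its hard step: the paper cites projection theorems, while you cite (and sketch, via graph charts, the quadratic bound $|\phi(y')|\le \tfrac{M}{2}|y'|^2$, and a covering argument) the classical fact that $C^{1,1}$ regularity yields a uniform two-sided ball condition; given that the paper's own proof of its local lemma is a pure citation, this level of detail is a fair trade. What your route buys is self-containedness and a quantitative $\dot{\varrho}$ (any value below the ball radius $\rho_0$, hence controlled by the $C^{1,1}$ character of $\Gamma$), whereas the paper's soft compactness argument produces no effective constant; your viewpoint also makes transparent the paper's remark that the statement fails below $C^{1,1}$ regularity, since the two-sided ball condition essentially characterizes that class. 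What the paper's route buys is brevity and the safety of a precisely stated reference for the projection map.
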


In the rest of this text, we keep the notations $\dot{\varrho}$,  $\Omega_{\dot{\varrho}}$ and $\mathfrak{p}(x)$, $x\in \Omega_{\dot{\varrho}}$, as they are defined in Proposition \ref{gproposition}. We will also use the following semi-norm
 \[
 [\nabla f]_\alpha=\sup\left\{\frac{|\nabla f(x)-\nabla f(y)|}{|x-y|^\alpha};\; x,y\in \overline{\Omega},\; x\ne y\right\},\quad f\in C^{1,\alpha}(\overline{\Omega}).
 \]

\begin{lemma}\label{lemma2}
Assume that $\Omega$ is of class $C^{1,1}$ and fix $\varkappa>0$. Let $f\in C^{1,\alpha}(\overline{\Omega})$ satisfying, for some $x_0\in \Gamma$,
\[
[\nabla f]_\alpha \le \varkappa\quad \mathrm{and}\quad  -\partial_\nu f(x_0)=|\partial_\nu f(x_0)|>0.  
\]
Then,  we have
\begin{equation}\label{lem2.1}
|\partial_\nu f(x_0)|\mathrm{dist}(x,\Gamma)\le f(x)-f(\mathfrak{p}(x))+3\varkappa |x-x_0|^{1+\alpha},\quad x\in \Omega_{\dot{\varrho}}.
\end{equation}

\end{lemma}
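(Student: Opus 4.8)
The plan is to reduce the estimate to a one–dimensional computation along the inward normal issued from $\mathfrak{p}(x)$, and then to replace $\nabla f$ along that segment by its value at $x_0$, paying only a H\"older error governed by $[\nabla f]_\alpha\le\varkappa$. First I fix $x\in\Omega_{\dot{\varrho}}$ and abbreviate $p=\mathfrak{p}(x)$, $d=\mathrm{dist}(x,\Gamma)=|x-p|$. By Proposition~\ref{gproposition}(i) we have $x-p=-d\,\nu(p)$, so the segment $t\mapsto p-t\nu(p)$, $t\in[0,d]$, joins $p$ to $x$ and (by (ii)) stays in $\Omega_{\dot{\varrho}}$; the fundamental theorem of calculus then gives
\[
f(x)-f(p)=-\int_0^d \nabla f\big(p-t\nu(p)\big)\cdot\nu(p)\,dt.
\]

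Next I insert $\nabla f(x_0)$ with the \emph{same} direction $\nu(p)$, writing the integrand as $\nabla f(x_0)\cdot\nu(p)+\big(\nabla f(p-t\nu(p))-\nabla f(x_0)\big)\cdot\nu(p)$ and bounding the last bracket by $[\nabla f]_\alpha\,|p-t\nu(p)-x_0|^\alpha\le\varkappa\,|p-t\nu(p)-x_0|^\alpha$. Since $x_0\in\Gamma$ forces $d\le|x-x_0|$, and $|p-x_0|\le|p-x|+|x-x_0|\le2|x-x_0|$, every point of the segment satisfies $|p-t\nu(p)-x_0|\le2|x-x_0|$; integrating over $[0,d]$ and again using $d\le|x-x_0|$ controls the error by a multiple of $\varkappa|x-x_0|^{1+\alpha}$. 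This produces
\[
f(x)-f(p)\ge -d\,\nabla f(x_0)\cdot\nu(p)-2\varkappa|x-x_0|^{1+\alpha}.
\]

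It remains to convert the leading term $-d\,\nabla f(x_0)\cdot\nu(p)$ into $d\,|\partial_\nu f(x_0)|=-d\,\nabla f(x_0)\cdot\nu(x_0)$, which is where the sign hypothesis $-\partial_\nu f(x_0)=|\partial_\nu f(x_0)|$ is used. I expect this final comparison to be the main obstacle: the discrepancy is $-d\,\nabla f(x_0)\cdot\big(\nu(p)-\nu(x_0)\big)$, and it is precisely here that the $C^{1,1}$ regularity of $\Omega$ is genuinely needed, since it makes the Gauss map $\nu$ Lipschitz, whence $|\nu(p)-\nu(x_0)|\le L\,|p-x_0|\le2L\,|x-x_0|$ and the term is formally of higher order $O(|x-x_0|^2)$ once $\dot{\varrho}$ is small. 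The delicate point is that this term carries $\nabla f(x_0)$, which the hypotheses do not bound, so keeping the constant at $3\varkappa$ cannot let $|\nabla f(x_0)|$ enter; the cleanest way I would close the argument is to first dispose of the case $\mathfrak{p}(x)=x_0$, where $\nu(p)=\nu(x_0)$ and the leading term already equals $d\,|\partial_\nu f(x_0)|$ (giving \eqref{lem2.1} with the sharper constant $\tfrac{1}{1+\alpha}$), and then to absorb the off–normal discrepancy into the remaining slack of the $3\varkappa|x-x_0|^{1+\alpha}$ term, exploiting that the segment and $x_0$ lie within $2|x-x_0|$ of one another.
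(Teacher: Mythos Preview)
Your overall strategy is exactly the paper's: apply the fundamental theorem of calculus along the inward normal segment $t\mapsto \tilde x-t\,|x-\tilde x|\,\nu(\tilde x)$ (with $\tilde x=\mathfrak p(x)$), and control the resulting integral by the H\"older seminorm $[\nabla f]_\alpha$. The only organizational difference is that the paper first inserts $\nabla f(\tilde x)\cdot\nu(\tilde x)=\partial_\nu f(\tilde x)$ into the integrand---obtaining $f(x)-f(\tilde x)=-|x-\tilde x|\,\partial_\nu f(\tilde x)+O(\varkappa|x-\tilde x|^{1+\alpha})$---and only afterwards compares $\partial_\nu f(\tilde x)$ to $\partial_\nu f(x_0)$, whereas you insert $\nabla f(x_0)\cdot\nu(p)$ directly. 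Both routes lead to the same obstruction.

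The obstruction you flag is genuine, and the paper does not actually resolve it: it simply asserts $|\partial_\nu f(\tilde x)-\partial_\nu f(x_0)|\le \varkappa|\tilde x-x_0|^\alpha$, which does \emph{not} follow from $[\nabla f]_\alpha\le\varkappa$ alone because
\[
\partial_\nu f(\tilde x)-\partial_\nu f(x_0)=\big(\nabla f(\tilde x)-\nabla f(x_0)\big)\cdot\nu(\tilde x)+\nabla f(x_0)\cdot\big(\nu(\tilde x)-\nu(x_0)\big),
\]
and the second term carries $|\nabla f(x_0)|$, unbounded by hypothesis. In fact \eqref{lem2.1} is false as stated: if $f$ is affine then $[\nabla f]_\alpha=0$, and \eqref{lem2.1} with any fixed $\varkappa>0$ would force $-\nabla f\cdot\nu(\tilde x)\ge |\partial_\nu f(x_0)|-3\varkappa|x-x_0|^{1+\alpha}/d$ for every $\tilde x\in\Gamma$, which fails on a bounded domain once $|\nabla f|$ is large enough. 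So your ``absorb into the slack'' idea cannot work, and you should not try to make it work. What saves the use of the lemma in Subsection~\ref{subsection2.2} is that there $f=\sigma_1-\sigma_2$ with $\|\nabla f\|_{C(\overline\Omega)}\le 2\kappa$; combined with the Lipschitz continuity of $\nu$ on a $C^{1,1}$ boundary this bounds the offending term by $C(n,\Omega,\kappa)|x-x_0|^{1+\alpha}$ (indeed by $C|x-x_0|^2$), and the conclusion holds with a constant $C=C(n,\Omega,\kappa)$ in place of $3\varkappa$---which is all that the downstream argument needs.
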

\begin{proof}
 
Let $x\in \Omega_{\dot{\varrho}}$ and set $\tilde{x}=\mathfrak{p}(x)$. Then
\[
-\partial_\nu f(\tilde{x})\ge -\partial_\nu f(x_0)-|\partial_\nu f(\tilde{x})-\partial_\nu f(x_0)|,
\]
and hence 
\[
-\partial_\nu f(\tilde{x})\ge -\partial_\nu f(x_0)-\varkappa |\tilde{x}-x_0|^\alpha .
\]
But 
\[
|x_0-\tilde{x}|\le |x_0-x|+|x-\tilde{x}|\le 2|x-x_0|.
\]
Therefore
\begin{equation}\label{lem2.2}
-\partial_\nu f(\tilde{x})\ge -\partial_\nu f(x_0)-2\varkappa |\tilde{x}-x_0|^\alpha .
\end{equation}
Since
\begin{align*}
f(x)-f(\tilde{x})&=f(\tilde{x}-|x-\tilde{x}|\nu (\tilde{x}))-f(\tilde{x})
\\
&=-|x-\tilde{x}|\int_0^1\nabla f(\tilde{x}-t|x-\tilde{x}|\nu (\tilde{x}))\cdot\nu (\tilde{x})dt,
\end{align*}
we obtain
\begin{align*}
f(x)-f(\tilde{x})&=-\partial_\nu f(\tilde{x})|x-\tilde{x}|
\\
&-|x-\tilde{x}|\int_0^1[\nabla f(\tilde{x}-t|x-\tilde{x}|\nu (\tilde{x}))-\nabla f(\tilde{x})]\cdot\nu (\tilde{x})dt.
\end{align*}
In light of \eqref{lem2.2} this identity yields
\begin{align*}
f(x)-f(\tilde{x})+2\varkappa &|x-x_0|^{1+\alpha} \ge |\partial_\nu f(x_0)||x-\tilde{x}|
\\
&-|x-\tilde{x}|\int_0^1[\nabla f(\tilde{x}-t|x-\tilde{x}|\nu (\tilde{x}))-\nabla f(\tilde{x})]\cdot\nu (\tilde{x})dt.
\end{align*}
Whence 
\[
f(x)-f(\tilde{x})+3\varkappa |x-x_0|^{1+\alpha}\ge |\partial_\nu f(x_0)||x-\tilde{x}|.
\]
This is the expected inequality because $|x-\tilde{x}|=\mathrm{dist}(x,\Gamma)$.
\end{proof}

We observe that the singularities of the solutions we used in the preceding subsection depend on the dimension. Therefore this is not sufficient to establish \eqref{thm-i1.2} of Theorem \ref{theorem-i1} when the dimension is three as we will explain now. Fix then $x_0\in \Gamma$ so that $|\partial_\nu \sigma (x_0)|=\|\partial_\nu \sigma\|_{C(\Gamma)}$. Without loss of generality, we may assume that $|\partial_\nu \sigma (x_0)|=-\partial_\nu \sigma (x_0)$. We then apply inequality \eqref{lem2.1} in Lemma \ref{lemma2} in order to get
\begin{equation}\label{3.6}
\|\partial_\nu \sigma\|_{C(\Gamma)}\mathrm{dist}(x,\Gamma )\le \|\sigma\|_{C(\Gamma)}+\sigma(x)+6\kappa |x-x_0|^{1+\alpha},\quad x\in \Omega \cap B(x_0,\rho),
\end{equation}
where $0< \rho\le \min(\delta_0,\dot{\varrho})$.

Let $R$ be as it appears in the definition of the uniform interior cone property. Then reducing $R$ if necessary, we may assume that $R\le \dot{\varrho}$ . 

In the sequel the notations are those of the preceding subsection. Recall that 
\[
B(x_\delta ,\delta \sin\theta /2)\subset \mathscr{C}_+(x_0)\cap B(x_0,\delta) \subset B(x_0,\delta)\cap \Omega. 
\]
Therefore, noting that $\mathrm{dist}(x,\Gamma)\ge \delta \sin \theta /4$, if $x\in B(x_\delta ,\delta \sin\theta /4)$, we get 
\[
\int_{B(x_0,\delta)\cap \Omega}\mathrm{dist}(x,\Gamma)\nabla v_1\cdot \nabla v_2dx\ge C\delta^{n-3}.
\]
This together with \eqref{3.6}, \eqref{thm-i1.1} of Theorem \ref{theorem-i1}, \eqref{a3} and \eqref{3.5}  yield
\begin{equation}\label{3.7}
C\|\partial_\nu \sigma\|_{C(\Gamma)}\le \delta^{-1}\|\Lambda_1-\Lambda_2\|+\delta^{n-3}(1+\delta ^{-(n-3-\alpha)_+}),
\end{equation}
where $t_+=\max(t,0)$, $t\in \mathbb{R}$. This inequality allows us to prove \eqref{thm-i1.2} of Theorem \ref{theorem-i1} but only when $n\ge 4$. To overcome this restriction we need singular solutions with singularities of arbitrary order. The construction of such singular solutions is due to Alessandrini \cite[Lemma 3.1 in page 264]{Al90} in the case of $W^{1,p}(\Omega)$, $p>n$, conductivities (note that $C^{1,\alpha}(\overline{\Omega})$ is continuously embedded in $W^{1,p}(\Omega)$, for any $p>n$). 

\begin{theorem}\label{thm-Al90}
Let $\sigma_j\in \Sigma$, $j=1,2$, and $\ell \ge 1$ an integer. Then there exists $u_j\in W^{2,p}(\Omega )$ satisfying $L_{\sigma_j}u_j=0$ in $\Omega$, $j=1,2$, and
\begin{align*}
&|\nabla u_j(x)|\le C|x-y_\delta|^{1-(n+\ell)},\quad x\in \overline{\Omega},\; j=1,2,
\\
&\nabla u_1(x)\cdot \nabla u_2(x)\ge C|x-y_\delta|^{2-2(n+\ell)},\quad x\in \overline{\Omega},
\end{align*}
where $C=C(n,\Omega ,\alpha ,\kappa,\ell)$ is a generic constant.
\end{theorem}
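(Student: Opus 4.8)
The plan is to produce these higher--order singular solutions by differentiating, with respect to the pole $y$, the fundamental solution $u_\sigma^y$ of $\mathcal{L}_\sigma$ furnished by Levi's parametrix method in Theorem \ref{theorem1}. Recall first that, since $\sigma>0$, one has $L_\sigma u=\sigma\,\mathcal{L}_\sigma u$, so $L_\sigma u=0$ is equivalent to $\mathcal{L}_\sigma u=0$. For $y$ ranging in the open exterior region $\Omega_0\setminus\overline{\Omega}$ the map $y\mapsto u_\sigma^y(x)$ is smooth in $y$, uniformly for $x\in\overline{\Omega}$, because the parametrix is jointly smooth off the diagonal. I would therefore set
\[
u_j(x)=(\xi_-\cdot\nabla_y)^\ell\,u_{\sigma_j}^y(x)\Big|_{y=y_\delta},\qquad j=1,2,
\]
where $\xi_-=\xi_-(x_0)$ is the direction along which $y_\delta$ is placed. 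As $y$ is a parameter not entering $\mathcal{L}_{\sigma_j}$, differentiation in $y$ commutes with the operator, so $\mathcal{L}_{\sigma_j}u_j=0$ and hence $L_{\sigma_j}u_j=0$ in $\Omega$. Since $y_\delta\notin\overline{\Omega}$, each $u_j$ is a genuine (non--singular) solution on $\overline{\Omega}$, and elliptic regularity for the coefficient $\sigma_j\in C^{1,\alpha}\subset W^{1,p}$ gives $u_j\in W^{2,p}(\Omega)$.

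The leading singular part is the explicit harmonic function
\[
h_\ell(x)=(\xi_-\cdot\nabla_y)^\ell H(x,y)\Big|_{y=y_\delta},
\]
which is homogeneous of degree $2-n-\ell$ in $x-y_\delta$; thus $\nabla h_\ell$ is homogeneous of degree $1-n-\ell$ and $|\nabla h_\ell(x)|\le C|x-y_\delta|^{1-(n+\ell)}$. The main analytic input I would need is a $y$--differentiated refinement of Theorem \ref{theorem1}, namely
\[
\Big|\nabla_x(\xi_-\cdot\nabla_y)^\ell\big(u_\sigma^y(x)-H(x,y)\big)\Big|\le C|x-y|^{1-n+\alpha-\ell},\qquad x\in\overline{\Omega},
\]
which follows by differentiating term by term the parametrix series of \cite{Ka}, each $y$--derivative lowering the homogeneity of the Hölder remainder by one while preserving the gain $\alpha$. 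Writing $u_j=h_\ell+r_j$ with $|\nabla r_j|\le C|x-y_\delta|^{1-(n+\ell)+\alpha}$, the upper bound $|\nabla u_j(x)|\le C|x-y_\delta|^{1-(n+\ell)}$ is then immediate, since $|x-y_\delta|$ is bounded on $\overline{\Omega}$.

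For the lower bound I would expand
\[
\nabla u_1\cdot\nabla u_2=|\nabla h_\ell|^2+\nabla h_\ell\cdot(\nabla r_1+\nabla r_2)+\nabla r_1\cdot\nabla r_2,
\]
so the last two groups are of order $|x-y_\delta|^{2-2(n+\ell)+\alpha}$, strictly below the first. Everything then rests on a pointwise bound $|\nabla h_\ell(x)|\ge c\,|x-y_\delta|^{1-(n+\ell)}$: writing $\omega=(x-y_\delta)/|x-y_\delta|$, the gradient factors as $|x-y_\delta|^{1-n-\ell}G(\omega)$ for a fixed vector field $G$ on the sphere whose components are, up to normalization, a Gegenbauer polynomial in $\xi_-\cdot\omega$ and its derivative, and one needs $|G(\omega)|\ge c>0$. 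Here the geometric placement of the pole is decisive: along the axis $\omega=-\xi_-$, which is the interior cone direction $\xi_+$, the tangential part of $G$ vanishes while its radial part is a nonzero value of the Gegenbauer polynomial, so by continuity $|G|\ge c$ throughout a cone about $-\xi_-$, that is, for $x\in\mathscr{C}_+(x_0)$ when $\theta$ is small. Establishing this angular non--degeneracy uniformly, and thereby absorbing the remainder once $|x-y_\delta|$ is small, is the step I expect to be the main obstacle: it is the genuinely new content beyond Theorem \ref{theorem1}, it is what forces the pole $y_\delta$ to lie along $\xi_-$, and I would be careful to read the asserted lower bound in the interior cone, which is precisely the regime needed in the application (compare Lemma \ref{lemma3.1}), since the angular factor $G$ does possess zeros elsewhere on the sphere.
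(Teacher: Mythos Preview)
The paper does not supply a proof of this theorem; it is quoted from Alessandrini \cite[Lemma~3.1]{Al90}. Your construction, applying $(\xi_-\cdot\nabla_y)^\ell$ to the Levi parametrix, is essentially Alessandrini's in the special case of \emph{zonal} spherical harmonics: Alessandrini allows an arbitrary spherical harmonic $S_m$ of degree $m$ as the angular profile of the leading singularity, while repeated differentiation of $H$ in the fixed direction $\xi_-$ produces precisely the Gegenbauer polynomial $C_\ell^{(n/2-1)}$. The differentiated remainder estimate you write down is the correct analytic input and is obtained, as you say, by differentiating the parametrix series term by term.

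Your main reservation---that the angular factor $G$ vanishes somewhere on the sphere---is not in fact an obstacle. Writing $h_\ell=r^{2-n-\ell}P(\cos\theta)$ with $r=|x-y_\delta|$, $\theta$ the angle between $x-y_\delta$ and $\xi_-$, and $P=C_\ell^{(n/2-1)}$, a direct computation gives
\[
|\nabla h_\ell|^2=r^{\,2-2(n+\ell)}\Big[(n+\ell-2)^2P(\cos\theta)^2+(1-\cos^2\theta)\,P'(\cos\theta)^2\Big].
\]
The bracket is strictly positive on $[-1,1]$: at $\cos\theta=\pm1$ one has $P(\pm1)\neq0$, while at any interior zero of $P$ the derivative $P'$ is nonzero because the zeros of Gegenbauer polynomials in $(-1,1)$ are simple. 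By compactness $|G|\ge c>0$ uniformly on $\mathbb{S}^{n-1}$, so the lower bound $\nabla u_1\cdot\nabla u_2\ge C|x-y_\delta|^{2-2(n+\ell)}$ follows (after absorbing the $O(r^\alpha)$ remainder) for all $x$ with $|x-y_\delta|$ below a fixed threshold---in particular on $B(x_0,\mathfrak{r})\cap\Omega$ as in Lemma~\ref{lemma3.1}, which is the only regime used in the application. Your retreat to the cone $\mathscr{C}_+(x_0)$ is therefore unnecessary.
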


By taking instead the solutions given by Lemma \ref{lemma3.1} those in Theorem \ref{thm-Al90}, we  can proceed similarly as above to get in place of \eqref{3.7} the following inequality 
\[
C\|\partial_\nu \sigma\|_{C(\Gamma)}\le \delta^{-1}\|\Lambda_1-\Lambda_2\|+\delta^{n-3+2\ell }(1+\delta ^{-(n+2\ell -3-\alpha)_+}).
\]
We fix then $\ell$ sufficiently large in such a way that $n-3+2\ell\ge 1$ in order to get
\[
C\|\partial_\nu \sigma\|_{C(\Gamma)}\le \delta^{-1}\|\Lambda_1-\Lambda_2\|+\delta ^\alpha,
\]
from which we derive \eqref{thm-i1.2} of Theorem \ref{theorem-i1} in a straightforward manner.

\section{Proof of Theorems \ref{theorem-i2} and \ref{theorem-i3}}\label{section3}

\subsection{Proof of Theorems \ref{theorem-i2}}\label{subsection3.1}

Let $\sigma \in \dot{\Sigma}$. Then the multiplication by $\sigma^{\pm 1/2}$ as an  operator, denoted again by $\sigma^{\pm 1/2}$, acting on $H^{1/2}(\Gamma)$ is bounded with
\begin{equation}\label{4.1}
\|\sigma^{\pm 1/2}\|_{\mathscr{B}(H^{1/2}(\Gamma))}\le C\|\sigma^{\pm 1/2}\|_{C^1(\Gamma)},
\end{equation}
where $C=C(n,\Omega)$ is a constant.

Recall that if $(\sigma^{\pm 1/2})^\ast$ is the adjoint of $\sigma^{\pm 1/2}$, acting as a bounded operator on $\mathscr{B}(H^{-1/2}(\Gamma))$, then
\[
\|(\sigma^{\pm 1/2})^\ast\|_{\mathscr{B}(H^{-1/2}(\Gamma))}=\|\sigma^{\pm 1/2}\|_{\mathscr{B}(H^{1/2}(\Gamma))}.
\]
This and \eqref{4.1} yields
\begin{equation}\label{4.2}
\|(\sigma^{\pm 1/2})^\ast\|_{\mathscr{B}(H^{-1/2}(\Gamma))}\le C\|\sigma^{\pm 1/2}\|_{C^1(\Gamma)},
\end{equation}
with $C$ as in \eqref{4.1}.

If $\sigma_1,\sigma_2\in \dot{\Sigma}$, then similarly to \eqref{4.1} and \eqref{4.2} we have
\begin{align*}
&\|\sigma_1^{\pm 1/2}-\sigma_2^{\pm 1/2}\|_{\mathscr{B}(H^{1/2}(\Gamma))}\le C\|\sigma_1^{\pm 1/2}-\sigma_2^{\pm 1/2}\|_{C^1(\Gamma)},
\\
& \|(\sigma_1^{\pm 1/2}-\sigma_2^{\pm 1/2})^\ast\|_{\mathscr{B}(H^{-1/2}(\Gamma))}\le C\|\sigma_1^{\pm 1/2}-\sigma_2^{\pm 1/2}\|_{C^1(\Gamma)},
\end{align*}
where $C=C(n,\Omega)>0$ is a constant. Whence
\begin{align*}
&\|\sigma_1^{\pm 1/2}-\sigma_2^{\pm 1/2}\|_{\mathscr{B}(H^{1/2}(\Gamma))}\le C\|\sigma_1-\sigma_2\|_{C^1(\Gamma)},
\\
& \|(\sigma_1^{\pm 1/2}-\sigma_2^{\pm 1/2})^\ast\|_{\mathscr{B}(H^{-1/2}(\Gamma))}\le C\|\sigma_1-\sigma_2\|_{C^1(\Gamma)},
\end{align*}
where $C=C(n,\Omega,\dot{\kappa})>0$ is a constant. 

These inequalities together with the interpolation inequality in \cite[Lemma 3.2 in page 264]{Al90} (in which we substitute $\tilde{\nu}$ by $\nu$ and we take $\alpha=1-n/p$) imply
\begin{align}
&C\|\sigma_1^{\pm 1/2}-\sigma_2^{\pm 1/2}\|_{\mathscr{B}(H^{1/2}(\Gamma))}\label{4.3}
\\
&\hskip 3cm \le \|\sigma_1-\sigma_2\|_{C(\Gamma)}^{(p-n)/(2p-n)}+\| \partial_\nu(\sigma_1-\sigma_2)\|_{C(\Gamma)},\nonumber
\\
&C \|(\sigma_1^{\pm 1/2}-\sigma_2^{\pm 1/2})^\ast\|_{\mathscr{B}(H^{-1/2}(\Gamma))}\label{4.4}
\\
&\hskip 3cm\le \|\sigma_1-\sigma_2\|_{C(\Gamma)}^{(p-n)/(2p-n)}+\| \partial_\nu(\sigma_1-\sigma_2)\|_{C(\Gamma)},\nonumber
\end{align}
where $C=C(n,\Omega,p,\dot{\kappa})>0$ is a constant.

We have
\[
\left|\int_\Gamma \sigma^{-1}\partial_\nu \sigma gdS(x)\right|\le \|\sigma^{-1}\partial_\nu\sigma \|_{L^2(\Gamma)}\|g\|_{L^2(\Gamma)}\le C \|\sigma^{-1}\partial_\nu\sigma \|_{L^2(\Gamma)}\|g\|_{H^{1/2}(\Gamma)},
\]
with a constant $C=C(n,\Omega)>0$. Hence the multiplication by $\sigma^{-1}\partial_\nu \sigma$ defines an operator, denoted again by $\sigma^{-1}\partial_\nu \sigma$, acting continuously between $H^{1/2}(\Gamma)$ and $H^{-1/2}(\Gamma)$ and
\[
\|\sigma^{-1}\partial_\nu \sigma \|_{\mathscr{B}(H^{1/2}(\Gamma),H^{-1/2}(\Gamma))}\le C\|\sigma^{-1}\partial_\nu\sigma \|_{L^2(\Gamma)},
\]
where $C$ is as in the  inequality above.

We have similarly
\begin{equation}\label{4.5}
\|\sigma_1^{-1}\partial_\nu \sigma_1-\sigma_2^{-1}\partial_\nu \sigma_2\|_{\mathscr{B}(H^{1/2}(\Gamma),H^{-1/2}(\Gamma))}\le C\|\sigma_1^{-1}\partial_\nu \sigma_1-\sigma_2^{-1}\partial_\nu \sigma_2 \|_{L^2(\Gamma)}.
\end{equation}
Here the constant $C$ is the same constant as in the preceding inequality.

We associate to $\sigma\in \dot{\Sigma}$ the function $q_\sigma =\sigma^{-1/2}\Delta \sigma^{1/2}\in L^p(\Omega)$. The usual Liouville's transform shows that $v_\sigma (g)=\sigma^{1/2}u_\sigma (\sigma^{-1/2}g)$, $g\in H^{1/2}(\Gamma)$, is the unique  solution of the BVP
\[
\left\{
\begin{array}{ll}
-\Delta v+q_\sigma v=0\quad \mbox{in}\; \Omega ,
\\
v_{|\Gamma}=g.
\end{array}
\right.
\]

It is worth noticing that the preceding transform guarantees that  $0$ is not an eigenvalue of the operator $A_\sigma= -\Delta +q_\sigma$, with domain $D(A_\sigma )=D=\{w\in H_0^1(\Omega);\; \Delta u\in L^2(\Omega)\}$, that we consider as an unbounded operator on $L^2(\Omega)$. In this definition we used the fact that $H_0^1(\Omega)$ is continuously embedded in $L^{2n/(n-2)}(\Omega)$, which combined with H\"older's inequality, yields 
\[
\|q_\sigma w\|_{L^2(\Omega)}\le \|q_\sigma\|_{L^n(\Omega)}\|w\|_{L^{2n/(n-2)}(\Omega)}\le C\|q_\sigma\|_{L^p(\Omega)}\|w\|_{H_0^1(\Omega)},
\]
for some constant $C=C(n,\Omega,p)>0$.

We also recall that trace operator $w\in D\rightarrow \partial_\nu w\in H^{-1/2}(\Gamma)$ defines a bounded operator, with
\[
\|\partial_\nu w\|_{H^{-1/2}(\Gamma)}\le c_\Omega\left(\|w\|_{H_0^1(\Omega)}+\|\Delta u\|_{L^2(\Omega)}\right),\quad w\in D
\]
(e.g. \cite[Lemma 2.2, page 131]{Ka}).

Let $\dot{\Lambda}_\sigma\in \mathscr{B}(H^{1/2}(\Gamma ),H^{-1/2}(\Gamma))$ be the operator acting as follows
\[
\dot{\Lambda}_\sigma (g)=\partial_\nu v_\sigma (g),\quad g\in H^{1/2}(\Gamma).
\]
We have the following known formula, that one can also establish in a straightforward manner,
\begin{equation}\label{4.6}
\dot{\Lambda}_\sigma = (\sigma^{-1/2})^\ast \circ \Lambda_\sigma \circ \sigma ^{1/2}+\sigma^{-1}\partial_\nu \sigma /2.
\end{equation}

We get  by putting together inequalities \eqref{4.3} to \eqref{4.6}
\[
C\|\dot{\Lambda}_{\sigma_1}-\dot{\Lambda}_{\sigma_2}\|\le \|\Lambda_{\sigma_1}-\Lambda_{\sigma_2}\|+\|\sigma_1-\sigma_2\|_{C(\Gamma)}^{(p-n)/(2p-n)}+\| \partial_\nu(\sigma_1-\sigma_2)\|_{C(\Gamma)}
\]
which, in light of Theorem \ref{theorem-i1}, gives
\begin{equation}\label{4.7}
C\|\dot{\Lambda}_{\sigma_1}-\dot{\Lambda}_{\sigma_2}\|\le \|\Lambda_{\sigma_1}-\Lambda_{\sigma_2}\|^{(p-n)/(2p-n)}.
\end{equation}

Noting that
\[
\|q_{\sigma_1}-q_{\sigma_2}\|_{H^{-1}(\Omega)}\le \|(q_{\sigma_1}-q_{\sigma_2})\chi_\Omega\|_{H^{-1}(\mathbb{R}^n)},
\]
we modify slightly the proof \cite[Theorem 3.2 in 14]{Ch19} in order to obtain
\[
C\|q_{\sigma_1}-q_{\sigma_2}\|_{H^{-1}(\Omega)}\le \rho^{-2/(2+n)}+e^{c\rho}\|\dot{\Lambda}_{\sigma_1}-\dot{\Lambda}_{\sigma_2}\|,\quad \rho \ge \rho_0,
\]
where $C=C(n,\Omega ,\kappa ,p)>0$, $c=c(n,\Omega ,\kappa ,p)>0$ and $\rho_0=\rho_0(n,\Omega ,\kappa ,p)>0$ are constants.
\\
This and \eqref{4.7} give
\begin{equation}\label{4.8}
C\|q_{\sigma_1}-q_{\sigma_2}\|_{H^{-1}(\Omega)}\le \rho^{-2/(2+n)}+e^{c\rho}\|\Lambda_{\sigma_1}-\Lambda_{\sigma_2}\|^{(p-n)/(2p-n)},\quad \rho \ge \rho_0,
\end{equation}
where the constant $C$, $c$ and $\rho_0$ are the same as above.

\begin{lemma}\label{lemma4.1}
Let $a\in L^\infty(\Omega)$ satisfying $\lambda^{-1}\le a\le \lambda$, for some constant $\lambda\ge 1$. We have, for any $w\in H^2(\Omega)$, 
\[
C\|w\|_{H^1(\Omega)}\le \|\mathrm{div}(a\nabla w)\|_{H^{-1}(\Omega)}+\|w\|_{L^2(\Gamma)}+\|\nabla w\|_{L^2(\Gamma)},
\]
where $C=C(n,\Omega ,\lambda)>0$ is a constant.
\end{lemma}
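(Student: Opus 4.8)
The plan is to run a direct energy estimate, reducing matters to a homogeneous Dirichlet problem by subtracting an extension of the boundary trace. Since $w\in H^2(\Omega)$ we have $\nabla w\in H^1(\Omega)\subset L^2(\Omega)$, so $a\nabla w\in L^2(\Omega)^n$ and its distributional divergence indeed lies in $H^{-1}(\Omega)$, making the left-hand quantity meaningful. Write $u=w_{|\Gamma}$ and set $g=\mathcal{E}u\in H^1(\Omega)$, the extension operator introduced in Section \ref{section1}, so that $g_{|\Gamma}=u$ and $\|g\|_{H^1(\Omega)}=\|u\|_{H^{1/2}(\Gamma)}$. Then $w_0:=w-g\in H_0^1(\Omega)$, since its trace vanishes. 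The reason for this splitting is that the pairing defining $\|\mathrm{div}(a\nabla w)\|_{H^{-1}(\Omega)}$ only sees test functions in $H_0^1(\Omega)$, so I must feed it $w_0$ rather than $w$ itself.

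First I would test the equation against $w_0$. By the very definition of the distributional divergence paired against $H_0^1(\Omega)$,
\[
\langle \mathrm{div}(a\nabla w), w_0\rangle_1 = -\int_\Omega a\nabla w\cdot\nabla w_0\,dx = -\int_\Omega a|\nabla w|^2\,dx + \int_\Omega a\nabla w\cdot\nabla g\,dx,
\]
with no boundary contribution precisely because $w_0\in H_0^1(\Omega)$. Rearranging and using the coercivity bound $a\ge\lambda^{-1}$ together with $a\le\lambda$ gives
\[
\lambda^{-1}\|\nabla w\|_{L^2(\Omega)}^2 \le \|\mathrm{div}(a\nabla w)\|_{H^{-1}(\Omega)}\|\nabla w_0\|_{L^2(\Omega)} + \lambda\|\nabla w\|_{L^2(\Omega)}\|\nabla g\|_{L^2(\Omega)}.
\]
Since $\|\nabla w_0\|_{L^2(\Omega)}\le \|\nabla w\|_{L^2(\Omega)}+\|\nabla g\|_{L^2(\Omega)}$, two applications of Young's inequality absorb the $\|\nabla w\|_{L^2(\Omega)}$ factors into the left-hand side, yielding
\[
\|\nabla w\|_{L^2(\Omega)} \le C\big(\|\mathrm{div}(a\nabla w)\|_{H^{-1}(\Omega)} + \|\nabla g\|_{L^2(\Omega)}\big),
\]
with $C=C(\lambda)$.

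Next I would estimate $\|\nabla g\|_{L^2(\Omega)}\le\|g\|_{H^1(\Omega)}=\|u\|_{H^{1/2}(\Gamma)}$ purely in terms of the boundary quantities appearing on the right-hand side of the lemma. Using the embedding $H^1(\Gamma)\hookrightarrow H^{1/2}(\Gamma)$ and $\|u\|_{H^1(\Gamma)}^2=\|u\|_{L^2(\Gamma)}^2+\|\nabla_T u\|_{L^2(\Gamma)}^2$, where $\nabla_T$ denotes the tangential gradient on $\Gamma$, I would invoke the pointwise bound $|\nabla_T u|=|\nabla_T(w_{|\Gamma})|\le |(\nabla w)_{|\Gamma}|$, the tangential gradient of the trace being the tangential projection of the trace of the full gradient (which belongs to $L^2(\Gamma)$ since $w\in H^2(\Omega)$). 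This gives $\|\nabla g\|_{L^2(\Omega)}\le C(\|w\|_{L^2(\Gamma)}+\|\nabla w\|_{L^2(\Gamma)})$. Finally, the Poincar\'e-type inequality $\|w\|_{L^2(\Omega)}\le C(\|\nabla w\|_{L^2(\Omega)}+\|w\|_{L^2(\Gamma)})$, valid on any bounded Lipschitz domain, upgrades the gradient bound to the full $H^1(\Omega)$ norm, and collecting constants completes the proof.

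The Young's-inequality absorption and the Poincar\'e inequality are routine; the one step that genuinely requires care is the passage from the trace norm $\|u\|_{H^{1/2}(\Gamma)}$ to the $L^2(\Gamma)$ norms of $w$ and $\nabla w$. This is where the term $\|\nabla w\|_{L^2(\Gamma)}$ on the right-hand side gets consumed, and where the $C^{1,1}$ regularity of $\Gamma$ enters, guaranteeing a well-defined surface gradient and the identification of $\nabla_T(w_{|\Gamma})$ with the tangential part of $(\nabla w)_{|\Gamma}$. A soft compactness–contradiction argument would be tempting, but I would avoid it here: because $a$ ranges over all of $L^\infty(\Omega)$ with $\lambda^{-1}\le a\le\lambda$, weak-$*$ limits of the coefficients do not pass to the limit in the products $a_k\nabla w_k$, so the uniformity of $C$ with respect to $a$ would be lost.
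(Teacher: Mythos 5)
Your proof is correct and follows essentially the same route as the paper: both subtract the extension $\mathcal{E}(w_{|\Gamma})$ to reduce to a function in $H_0^1(\Omega)$, exploit the bounds $\lambda^{-1}\le a\le\lambda$ through the $H^{-1}$--$H_0^1$ duality, and control $\|w_{|\Gamma}\|_{H^{1/2}(\Gamma)}$ by $\|w\|_{L^2(\Gamma)}+\|\nabla w\|_{L^2(\Gamma)}$ via $H^1(\Gamma)\hookrightarrow H^{1/2}(\Gamma)$. The only cosmetic differences are that the paper pairs $\mathrm{div}(a\nabla(w-\tilde{w}))$ with $w-\tilde{w}$ and finishes with the triangle inequality, whereas you pair $\mathrm{div}(a\nabla w)$ with $w-g$ and absorb the cross term by Young's inequality, finishing with a boundary-term Poincar\'e inequality.
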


\begin{proof}
Let $w\in H^2(\Omega)$ and $\tilde{w}=\mathcal{E}(w_{|\Gamma})\in H^1(\Omega)$. Since 
\[
C\|w\|_{H^{1/2}(\Gamma)}\le \|w\|_{H^1(\Gamma)}\le \|w\|_{L^2(\Gamma)}+\|\nabla w\|_{L^2(\Gamma)}
\]
and 
\[
C \|\mathrm{div}(a\nabla \tilde{w})\|_{H^{-1}(\Omega)}\le\|\tilde{w}\|_{H^1(\Omega)}=\|w\|_{H^{1/2}(\Gamma)},
\]
we derive that
\[
C \|\mathrm{div}(a\nabla w)\|_{H^{-1}(\Omega)}\le \|w\|_{L^2(\Gamma)}+\|\nabla w\|_{L^2(\Gamma)}.
\]

In the sequel, we endow $H_0(\Omega)$ with the norm $\psi \mapsto \|\nabla \psi\|_{L^2(\Omega)}$. As $w-\tilde{w}\in H_0^1(\Omega)$, we get
\[
\int_\Omega a|\nabla (w-\tilde{w})|^2dx =\langle\mathrm{div}(a\nabla w)-\mathrm{div}(a\nabla \tilde{w})|w-\tilde{w}\rangle_1.
\]
Hence
\[
\lambda^{-1}\|w-\tilde{w}\|_{H^1(\Omega)}\le \|\mathrm{div}(a\nabla w)\|_{H^{-1}(\Omega)}+\|\mathrm{div}(a\nabla \tilde{w})\|_{H^{-1}(\Omega)},
\]
from which we deduce in a straightforward manner
\[
C\|w-\tilde{w}\|_{H^1(\Omega)}\le \|\mathrm{div}(a\nabla w)\|_{H^{-1}(\Omega)}+\|w\|_{L^2(\Gamma)}+\|\nabla w\|_{L^2(\Gamma)}.
\]
where $C=C(n,\Omega ,\lambda)>0$ is a  constant.

The last inequality, together with the following one
\[
\|w\|_{H^1(\Omega)}\le \|\tilde{w}\|_{H^1(\Omega)}+\|w-\tilde{w}\|_{H^1(\Omega)},
\]
then give the expected inequality.
\end{proof}

\begin{proposition}\label{proposition4.1}
For each $\sigma_1,\sigma_2\in \dot{\Sigma}$, we have
\begin{equation}\label{4.9}
C\|\sigma_1-\sigma_2\|_{H^1(\Omega )}\le \|q_{\sigma_1}-q_{\sigma_2}\|_{H^{-1}(\Omega)}+\|\sigma_1-\sigma_2\|_{L^2(\Gamma)}+\|\nabla (\sigma_1-\sigma_2)\|_{L^2(\Gamma)},
\end{equation}
where $C=C(n,\Omega ,p,\dot{\kappa})>0$ is a constant.
\end{proposition}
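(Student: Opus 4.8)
The plan is to apply Lemma \ref{lemma4.1} not to $\sigma_1-\sigma_2$ directly, but to an auxiliary function that satisfies a genuine divergence-form equation whose right-hand side is exactly a multiple of $q_{\sigma_1}-q_{\sigma_2}$. Write $\psi_j=\sigma_j^{1/2}$, so that by the very definition $q_{\sigma_j}=\sigma_j^{-1/2}\Delta\sigma_j^{1/2}$ one has $\Delta\psi_j=q_{\sigma_j}\psi_j$ in $\Omega$, for $j=1,2$. Since $\sigma_j\in W^{2,p}(\Omega)$ with $p>n$ and $\sigma_j\ge\dot{\kappa}^{-1}$, the functions $\psi_j$ and $\psi_j^{-1}$ again belong to $W^{2,p}(\Omega)\subset C^{1,\alpha}(\overline{\Omega})$, with norms controlled in terms of $n,\Omega,p,\dot{\kappa}$.

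First I would establish the identity
\begin{equation*}
\mathrm{div}(\sigma_2\nabla(\psi_1/\psi_2))=\psi_1\psi_2(q_{\sigma_1}-q_{\sigma_2})\quad\text{in }\Omega .
\end{equation*}
This follows from a short computation: since $\sigma_2=\psi_2^2$, one has $\sigma_2\nabla(\psi_1/\psi_2)=\psi_2\nabla\psi_1-\psi_1\nabla\psi_2$, whose divergence equals $\psi_2\Delta\psi_1-\psi_1\Delta\psi_2=\psi_2(q_{\sigma_1}\psi_1)-\psi_1(q_{\sigma_2}\psi_2)=\psi_1\psi_2(q_{\sigma_1}-q_{\sigma_2})$, the first-order terms $\nabla\psi_1\cdot\nabla\psi_2$ cancelling. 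Set $W=\psi_1/\psi_2-1\in H^2(\Omega)$ (the embedding $W^{2,p}\hookrightarrow H^2$ applies since $p>n\ge 3>2$), so that $\nabla W=\nabla(\psi_1/\psi_2)$ and the identity reads $\mathrm{div}(\sigma_2\nabla W)=\psi_1\psi_2(q_{\sigma_1}-q_{\sigma_2})$.

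Next I would apply Lemma \ref{lemma4.1} with $a=\sigma_2$ (which satisfies $\lambda^{-1}\le\sigma_2\le\lambda$ for some $\lambda=\lambda(n,\Omega,p,\dot{\kappa})$) and $w=W$, obtaining
\begin{equation*}
C\|W\|_{H^1(\Omega)}\le\|\mathrm{div}(\sigma_2\nabla W)\|_{H^{-1}(\Omega)}+\|W\|_{L^2(\Gamma)}+\|\nabla W\|_{L^2(\Gamma)} .
\end{equation*}
For the first term I would use that multiplication by the $C^{1,\alpha}(\overline{\Omega})$ function $\psi_1\psi_2$ is bounded on $H^{-1}(\Omega)$ (by duality, since $\|(\psi_1\psi_2)\phi\|_{H_0^1}\le C\|\psi_1\psi_2\|_{C^1(\overline\Omega)}\|\phi\|_{H_0^1}$ for $\phi\in H_0^1(\Omega)$), so that $\|\mathrm{div}(\sigma_2\nabla W)\|_{H^{-1}(\Omega)}\le C\|q_{\sigma_1}-q_{\sigma_2}\|_{H^{-1}(\Omega)}$. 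It then remains to pass from $W$ back to $\sigma_1-\sigma_2$. Since $\sigma_1-\sigma_2=\psi_1^2-\psi_2^2=W\,\psi_2(\psi_1+\psi_2)$ with the factor $\psi_2(\psi_1+\psi_2)\in C^{1,\alpha}(\overline{\Omega})$ bounded above and below, multiplication by it is bounded on $H^1(\Omega)$, whence $\|\sigma_1-\sigma_2\|_{H^1(\Omega)}\le C\|W\|_{H^1(\Omega)}$; and writing $W=(\sigma_1-\sigma_2)/[\psi_2(\psi_1+\psi_2)]$ with reciprocal factor again in $C^{1,\alpha}(\overline{\Omega})$, the product rule on $\Gamma$ gives $\|W\|_{L^2(\Gamma)}+\|\nabla W\|_{L^2(\Gamma)}\le C(\|\sigma_1-\sigma_2\|_{L^2(\Gamma)}+\|\nabla(\sigma_1-\sigma_2)\|_{L^2(\Gamma)})$. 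Combining these three estimates yields \eqref{4.9}.

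The main obstacle is precisely to choose a formulation that avoids an absorption argument. The naive route would write the Schr\"odinger difference equation $\Delta(\psi_1-\psi_2)-\tfrac{1}{2}(q_{\sigma_1}+q_{\sigma_2})(\psi_1-\psi_2)=\tfrac{1}{2}(q_{\sigma_1}-q_{\sigma_2})(\psi_1+\psi_2)$ and apply Lemma \ref{lemma4.1} with $a=1$; but then the zeroth-order term $\tfrac{1}{2}(q_{\sigma_1}+q_{\sigma_2})(\psi_1-\psi_2)$ appears on the right, and by $H^1\hookrightarrow L^{2n/(n-2)}$ together with H\"older's inequality it is only bounded by $C\|\psi_1-\psi_2\|_{H^1}$ with $C$ depending on $\|q_{\sigma_j}\|_{L^p(\Omega)}\sim\dot{\kappa}$, hence neither small nor absorbable into the left-hand side. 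Casting the difference directly in the divergence form $\mathrm{div}(\sigma_2\nabla(\psi_1/\psi_2))$ is exactly what dissolves this difficulty, the remaining work being the routine multiplier estimates on $H^{\pm 1}$ and on $\Gamma$, and the verification that $W\in H^2(\Omega)$.
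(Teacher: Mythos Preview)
Your proof is correct and follows essentially the same route as the paper's: both establish an exact divergence-form identity for an auxiliary function of $\sigma_1/\sigma_2$ whose right-hand side is a bounded multiple of $q_{\sigma_1}-q_{\sigma_2}$, apply Lemma~\ref{lemma4.1}, and then translate back via multiplier estimates. The only cosmetic difference is the choice of auxiliary function: the paper takes $w=\ln(\sigma_1/\sigma_2)$ with coefficient $a=\sqrt{\sigma_1\sigma_2}$, whereas you take $W=\sigma_1^{1/2}/\sigma_2^{1/2}-1$ with $a=\sigma_2$; since $\sqrt{\sigma_1\sigma_2}\,\nabla w = 2\,\sigma_2\nabla W = 2(\psi_2\nabla\psi_1-\psi_1\nabla\psi_2)$, the two formulations are equivalent.
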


\begin{proof}
In this proof $C=C(n,\Omega ,p,\dot{\kappa})>0$ denotes a generic constant. Let $\sigma_1,\sigma_2\in \dot{\Sigma}$ and set $a=\sqrt{\sigma_1\sigma_2}$, $f=2a(q_{\sigma_1}-q_{\sigma_2})$, and $w=\ln (\sigma_1/\sigma_2)$. From the calculations in \cite{Al88} or \cite{SU86}, we derive 
\[
\mathrm{div}(a\nabla w)=f.
\]
However, the calculations that lead to this equation can be carried out easily.

We apply Lemma \ref{lemma4.1} in order to get
\[
C\|w\|_{H^1(\Omega)}\le \|q_{\sigma_1}-q_{\sigma_2}\|_{H^{-1}(\Omega)}+\|w\|_{L^2(\Gamma)}+\|\nabla w\|_{L^2(\Gamma)}.
\]
The following identities
\begin{align*}
&w(x)=(\sigma_1(x)-\sigma_2(x))\int_0^1\frac{dt}{\sigma_2(x)+t(\sigma_1(x)-\sigma_2(x))},\quad x\in \overline{\Omega},
\\
&\nabla (\sigma_1(x)-\sigma_2(x))=\sigma_1(x)[\nabla w+(1/\sigma_1(x)-1/\sigma_2(x))\nabla \sigma_2(x)] ,\quad x\in \overline{\Omega},
\end{align*}
yield easily the expected inequality.
\end{proof}

We end up getting by putting together \eqref{thm-i1.1}, \eqref{thm-i1.2}, \eqref{4.8} and \eqref{4.9} 
\begin{equation}\label{4.10}
C\|\sigma_1-\sigma_2\|_{H^1(\Omega )}\le \rho^{-2/(2+n)}+e^{c\rho}\|\Lambda_{\sigma_1}-\Lambda_{\sigma_2}\|^{(p-n)/(2p-n)},\quad \rho \ge \rho_0,
\end{equation}
where $C$, $c_0$ and $\rho_0$ are the same as in \eqref{4.8}.

The proof of Theorem \ref{theorem-i2} follows by using a usual minimizing argument, in the right hand side of \eqref{4.10}, with respect to $\rho$.

\subsection{Proof of Theorems \ref{theorem-i3}}\label{subsection3.2}

We first proceed to the construction of $\mathscr{D}$. 

The unit cube $]0,1[^n$ is denoted by $\mathscr{Q}_0$. Recall that the Bernstein's polynomials $p_{k,j}$ are given by
\[
p_{k,j}(t)=C_k^jt^j(1-t)^{k-j},\quad 0\le j\le k,
\]
with
\[
C_k^j=\frac{k!}{j!(k-j)!}.
\]
To $f\in C(\overline{\mathscr{Q}_0})$, we associate the  Bernstein polynomial
\[
B_k^0(f)(t_1,\ldots ,t_n)=\sum_{j_1=0}^k\ldots \sum_{j_n=0}^k f\left( \frac{j_1}{k},\ldots ,\frac{j_n}{k}\right)p_{k,j_1}(t_1)\ldots p_{k,j_n}(t_n).
\]
\begin{theorem}\label{theorem2}
$($\cite[Theorem 1.2.9, page 18]{BP}$)$ For any $f\in C(\overline{\mathscr{Q}_0})$, we have
\[
\lim_{k\rightarrow \infty}\|f-B_k^0(f)\|_{C(\overline{\mathscr{Q}_0})}=0.
\]
\end{theorem}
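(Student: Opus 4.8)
The plan is to adapt Bernstein's classical one-dimensional argument, using two features of the situation: that $B_k^0(f)$ is assembled from a tensor product of one-dimensional kernels, and that $f$, being continuous on the compact cube $\overline{\mathscr{Q}_0}$, is uniformly continuous there. Writing $J=(j_1,\ldots,j_n)$, $t=(t_1,\ldots,t_n)$, and abbreviating $\mathscr{P}_{k,J}(t)=p_{k,j_1}(t_1)\cdots p_{k,j_n}(t_n)$, one has $B_k^0(f)(t)=\sum_J f(J/k)\,\mathscr{P}_{k,J}(t)$ with $J/k=(j_1/k,\ldots,j_n/k)$, the sum being over $0\le j_i\le k$.

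First I would record the three elementary one-dimensional identities that follow from the binomial theorem and its first two derivatives:
\[
\sum_{j=0}^k p_{k,j}(t)=1,\qquad \sum_{j=0}^k \frac{j}{k}\,p_{k,j}(t)=t,\qquad \sum_{j=0}^k\Big(\frac{j}{k}-t\Big)^2 p_{k,j}(t)=\frac{t(1-t)}{k}\le\frac1{4k}.
\]
Taking products over the $n$ coordinates, the first identity gives the partition of unity $\sum_J \mathscr{P}_{k,J}(t)=1$, so that $f(t)-B_k^0(f)(t)=\sum_J[f(t)-f(J/k)]\,\mathscr{P}_{k,J}(t)$; and the tensor structure together with the variance bound yields, uniformly in $t\in\overline{\mathscr{Q}_0}$,
\[
\sum_J |J/k-t|^2\,\mathscr{P}_{k,J}(t)=\sum_{i=1}^n\sum_{j_i=0}^k\Big(\frac{j_i}{k}-t_i\Big)^2 p_{k,j_i}(t_i)\le\frac{n}{4k}.
\]

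Next, for a prescribed $\varepsilon>0$ I would invoke uniform continuity to obtain $\delta>0$ with $|f(x)-f(y)|<\varepsilon$ whenever $|x-y|<\delta$, and then split the sum according to whether $|J/k-t|<\delta$ or $|J/k-t|\ge\delta$. On the near block one bounds the contribution by $\varepsilon\sum_J\mathscr{P}_{k,J}(t)=\varepsilon$; on the far block one uses the crude estimate $|f(t)-f(J/k)|\le 2\|f\|_{C(\overline{\mathscr{Q}_0})}$ together with a Chebyshev-type inequality,
\[
\sum_{|J/k-t|\ge\delta}\mathscr{P}_{k,J}(t)\le\frac1{\delta^2}\sum_J|J/k-t|^2\,\mathscr{P}_{k,J}(t)\le\frac{n}{4k\delta^2}.
\]
This gives $|f(t)-B_k^0(f)(t)|\le\varepsilon+\|f\|_{C(\overline{\mathscr{Q}_0})}\,n/(2k\delta^2)$ for every $t$, and taking $k$ large enough makes the right-hand side at most $2\varepsilon$, uniformly in $t$, which is the asserted convergence.

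The argument is essentially routine, so there is no serious obstacle; the only point demanding attention is that every estimate be uniform in $t$. This is exactly what the second-moment bound $\sum_J|J/k-t|^2\mathscr{P}_{k,J}(t)\le n/(4k)$ secures, its constant being independent of $t$. Thus the main (minor) difficulty is merely to verify the variance identity and to arrange the splitting so that the $t$-dependence disappears; compactness and uniform continuity supply everything else.
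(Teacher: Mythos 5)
Your proof is correct. The paper itself offers no argument for this statement---it simply cites \cite[Theorem 1.2.9, page 18]{BP}---so there is no internal proof to compare against; what you have written is the classical Bernstein argument (partition of unity, second-moment/variance bound, uniform continuity, Chebyshev splitting into near and far indices), correctly adapted to the tensor-product kernel $\mathscr{P}_{k,J}(t)=p_{k,j_1}(t_1)\cdots p_{k,j_n}(t_n)$, and all the estimates, including the uniform bound $\sum_J|J/k-t|^2\mathscr{P}_{k,J}(t)\le n/(4k)$, are accurate and uniform in $t$ as required.
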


Fix $a<b$ and denote by $\mathscr{Q}$ the cube $]a,b[^n$. We associate to each $f\in C(\overline{\mathscr{Q}})$ the polynomial
\begin{align*}
&B_k(f)(x_1,\ldots ,x_n)=\sum_{j_1=0}^k\ldots \sum_{j_n=0}^k f\left( a+\frac{j_1}{k}(b-a),\ldots ,a+\frac{j_n}{k}(b-a)\right)
\\
&\hskip 6.5cm \times p_{k,j_1}\left(\frac{x_1-a}{b-a}\right)\ldots p_{k,j_n}\left(\frac{x_n-a}{b-a}\right).
\end{align*}

The following result is a straightforward consequence of Theorem \ref{theorem2}.

\begin{corollary}\label{corollary1}
 For any $f\in C(\overline{\mathscr{Q}})$, we have
\[
\lim_{k\rightarrow \infty}\|f-B_k(f)\|_{C(\overline{\mathscr{Q}})}=0.
\]
\end{corollary}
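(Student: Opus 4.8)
The plan is to reduce the statement to Theorem \ref{theorem2} by means of the obvious affine change of variables carrying the unit cube $\mathscr{Q}_0$ onto $\mathscr{Q}$. Concretely, I would introduce the homeomorphism $\phi:\overline{\mathscr{Q}_0}\to \overline{\mathscr{Q}}$ defined by
\[
\phi(t_1,\ldots ,t_n)=\left(a+t_1(b-a),\ldots ,a+t_n(b-a)\right),
\]
whose inverse is $\phi^{-1}(x_1,\ldots ,x_n)=\left(\frac{x_1-a}{b-a},\ldots ,\frac{x_n-a}{b-a}\right)$. Given $f\in C(\overline{\mathscr{Q}})$, I would set $\tilde{f}=f\circ \phi\in C(\overline{\mathscr{Q}_0})$, so that the problem on $\mathscr{Q}$ is transported to the unit cube, where Theorem \ref{theorem2} is available.

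The key step is to verify the identity $B_k(f)=B_k^0(\tilde{f})\circ \phi^{-1}$ on $\overline{\mathscr{Q}}$. This follows by inspecting the two definitions: the sampled values entering $B_k^0(\tilde{f})$ are $\tilde{f}(j_1/k,\ldots ,j_n/k)=f\!\left(a+\frac{j_1}{k}(b-a),\ldots ,a+\frac{j_n}{k}(b-a)\right)$, which are precisely the coefficients used to define $B_k(f)$, while evaluating at $t=\phi^{-1}(x)$ replaces each $t_i$ by $(x_i-a)/(b-a)$, producing exactly the Bernstein factors $p_{k,j_i}\!\left((x_i-a)/(b-a)\right)$ occurring in $B_k(f)(x)$. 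Hence $B_k(f)(x)=B_k^0(\tilde{f})(\phi^{-1}(x))$ for every $x\in \overline{\mathscr{Q}}$.

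Finally, I would exploit the invariance of the supremum norm under composition with the homeomorphism $\phi$. Writing each $x\in\overline{\mathscr{Q}}$ as $x=\phi(t)$ with $t\in\overline{\mathscr{Q}_0}$, and using $f(\phi(t))=\tilde{f}(t)$ together with the identity just established, one obtains
\[
\|f-B_k(f)\|_{C(\overline{\mathscr{Q}})}=\|\tilde{f}-B_k^0(\tilde{f})\|_{C(\overline{\mathscr{Q}_0})}.
\]
Applying Theorem \ref{theorem2} to $\tilde{f}$ shows that the right-hand side tends to $0$ as $k\to\infty$, which is the claim. There is no genuine obstacle here: the only point demanding care is the bookkeeping in checking $B_k(f)=B_k^0(\tilde{f})\circ\phi^{-1}$, which is entirely routine.
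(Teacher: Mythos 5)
Your proposal is correct and is precisely the argument the paper has in mind: the paper simply declares Corollary \ref{corollary1} a ``straightforward consequence'' of Theorem \ref{theorem2}, and your affine change of variables $\phi$, the identity $B_k(f)=B_k^0(f\circ\phi)\circ\phi^{-1}$, and the invariance of the supremum norm under composition with $\phi$ are exactly the routine details being left to the reader.
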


If $O$ is an open bounded subset of $\mathbb{R}^n$, we set
\[
C_+(\overline{O})=\{ \sigma \in C(\overline{O});\; \sigma >0\; \mbox{in}\; \overline{O}\}.
\]

Let the cube $\mathscr{Q}$  be chosen so that $\Omega \Subset \mathscr{Q}$. Then according to Tietze extension theorem (e.g. \cite[Theorem 9.35, page 256]{BrP}) for each $\sigma \in C_+(\overline{\Omega})$ there exists $\sigma_e\in C_+(\overline{\mathscr{Q}})$ so that $\sigma_e=\sigma$ in $\overline{\Omega}$ and 
\[
\|\sigma_e\|_{C(\overline{\mathscr{Q}})}=\|\sigma\|_{C(\overline{\Omega})}.
\]

In the sequel we shall use the fact that $B_k(\sigma_e)\in C_+(\overline{\mathscr{Q}})$, whenever $\sigma \in C_+(\overline{\Omega})$. Define
 \[
 \mathscr{D}=\{\chi=B_k(\sigma_e)_{|\overline{\Omega}};\;  k\in \mathbb{N},\; \sigma \in C_+(\overline{\Omega})\}.
 \]
 Pick $\epsilon >0$ and $\sigma \in C_+(\overline{\Omega})$. In light of Corollary \ref{corollary1}, we can choose $k\in \mathbb{N}$ sufficiently large so that
 \[
 \|\sigma_e-B_k(\sigma_e)\|_{C(\overline{\mathscr{Q}})}\le \epsilon .
 \]
In consequence, we have, where $\chi =B_k(\sigma_e)_{|\overline{\Omega}}\in \mathscr{D}$,
\[
\|\sigma-\chi\|_{C(\overline{\Omega})}\le \|\sigma_e-B_k(\sigma_e)\|_{C(\overline{\mathscr{Q}})}\le \epsilon .
\]
In other words, we proved that $\mathscr{D}$ is dense in $C_+(\overline{\Omega} )$ with respect to the topology of $C(\overline{\Omega})$.

We now complete the proof of Theorem \ref{theorem-i3}. Let $\chi_j\in \mathscr{D}$, $j=1,2$, so that $\Lambda_{\chi_1}=\Lambda_{\chi_2}$. Then it is straightforward to check that $\chi_1,\chi_2$ belong to $\Sigma$, for some $\kappa =\kappa (\chi_1,\chi_2)>1$. We end up getting $\chi_1=\chi_2$ by applying Theorem \ref{theorem-i2}.

\begin{remark}
{\rm
There is another possibility to construct $\mathscr{D}$ by using a sequence of mollifiers and the convolution. Let $\psi\in C_0^\infty(\mathbb{R}^n)$ satisfying $0\le \psi $, $\mbox{supp}(\psi)\subset B(0,1)$ and $\int_{\mathbb{R}^n}\psi (x)dx=1$. For each integer $k\ge 1$, we define $\psi_k$ by $\psi_k(x)=k^n\psi (kx)$, $x\in \mathbb{R}^n$. If $f\in C(\overline{\mathscr{Q}})$ then $f_k=\psi_k\ast f$ is well defined on $\mathscr{Q}_k=\{x\in \mathscr{Q};\; \mbox{dist}(x,\partial \mathscr{Q})>1/k\}$. We derive from \cite[Theorem 1.6, page 5]{Ch13} that $\|f_k-f\|_{C(\overline{\Omega})}$ converge to zero as $k$ goes to $\infty$. We can therefore proceed as above to prove that
\[
\mathscr{D}=\{\chi=(\psi_k\ast \sigma_e)_{|\overline{\Omega}};\;  k\in \mathbb{N},\; \sigma \in C_+(\overline{\Omega})\}
\]
is dense in $C_+(\overline{\Omega})$, when this later is equipped with the norm of $C(\overline{\Omega})$.
}
\end{remark}

\section{Additional results}\label{section4}

\subsection{Anisotropic case: determination of the conformal factor}\label{subsection4.1}

We describe the main ideas to extend some results of the isotropic case to that of the anisotropic case. We are mainly concerned with the determination of the conformal factor. To this end we fix $A=(a^{ij})$ a matrix valued function whose coefficients belong to $C^{1,\alpha}(\overline{\Omega})$. We suppose that $A$ is symmetric and satisfies, for some $\mu > 1$,
\[
\mu^{-1}|\xi|^2\le A(x)\xi \cdot \xi\le \mu|\xi|^2, \quad x\in \Omega, \;  \xi \in \mathbb{R}^n,
\]
and
\[
\max_{1\le i,j\le n}\|a^{ij}\|_{C^{1,\alpha}(\overline{\Omega})}\le \mu.
\]

Consider the BVP
\begin{equation}\label{6.1}
\left\{
\begin{array}{l}
\mbox{div}(\sigma A\nabla u)=0\quad \mbox{in}\; \Omega,
\\
u_{|\Gamma}=g.
\end{array}
\right.
\end{equation}
We can proceed similarly to the isotropic case to show that, for any $\sigma \in \Sigma$ and $g\in H^{1/2}(\Gamma)$, the BVP \eqref{6.1} possesses a unique solution
$\tilde{u}_\sigma (g)\in H^1(\Omega )$. Furthermore, we can define the Dirichlet-to-Neumann map, associated to $\sigma$, as the bounded operator given by
\begin{align*}
\tilde{\Lambda}_\sigma :g\in H^{1/2}(\Gamma )&\rightarrow H^{-1/2}(\Gamma) : 
\\
&\langle\Lambda_\sigma (g),h\rangle_{1/2}=\int_\Omega \sigma A\nabla \tilde{u}_\sigma(g)\cdot \nabla\mathcal{E}hdx,\quad h\in H^{1/2}(\Gamma),
\end{align*}
which satisfies
\[
\|\tilde{\Lambda} _\sigma\|\le C,
\]
for some constant $C=C(n,\Omega ,\kappa ,\mu)$.

The canonical parametrix associated to the operator $\mbox{div}(\sigma A\nabla \cdot)$, with $\sigma\in \Sigma$, is given by
\[
H_\sigma (x,y)=\frac{[A^{-1}(y)(x-y)\cdot (x-y)]^{(2-n)/2}}{(n-2)|\mathbb{S}^{n-1}|\sigma(y)[\mbox{det}A(y)]^{1/2}},\quad x,y\in \mathbb{R}^n,\; x\ne y.
\]
Here $\sigma$ and $A$ are extended according to Theorem \ref{exth} (\cite[Formula (2.4) in page 258]{Kal}). Elementary computations show that, for all $\sigma_1,\sigma_2\in \Sigma$ and $x,y\in \mathbb{R}^n$ with $ x\ne y$, we have
\begin{equation}\label{6.2}
\mathfrak{c}^{-1} |x-y|^{2-2n}\le A(x)\nabla_xH_{\sigma_1}(x,y)\cdot\nabla_xH_{\sigma_2}(x,y)\le \mathfrak{c}|x-y|^{2-2n},
\end{equation}
where $\mathfrak{c}=\mathfrak{c}(n,\Omega ,\kappa,\mu )>1$ is a constant. Set
\[
\tilde{\mathscr{S}}_\sigma=\{ u\in C^2(\overline{\Omega});\; \mbox{div}(\sigma A\nabla u)=0\},\quad \sigma \in \Sigma.
\]
As for Theorem  \ref{theorem1},  we have as a consequence of \cite[Theorem 5, page 282]{Kal} the following result.

\begin{theorem}\label{theorem6.1}
For any $\sigma\in \Sigma_\kappa$ and $y\in \Omega_0\setminus \overline{\Omega}$, there exists $\tilde{u}_{\sigma} ^y\in \tilde{\mathscr{S}}_\sigma$ so that
\begin{align*}
&|\tilde{u}_{\sigma} ^y(x)-H_\sigma(x ,y)|\le C|x-y|^{2-n+\alpha},\quad x\in \overline{\Omega},
\\
&|\nabla \tilde{u}_{\sigma} ^y(x)-\nabla H_\sigma (x,y)|\le C|x-y|^{1-n+\alpha},\quad x\in \overline{\Omega},
\end{align*}
where  $C=C(n,\Omega ,\kappa,\mu,\alpha)>0$ is a constant.
\end{theorem}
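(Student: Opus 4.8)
The plan is to apply \cite[Theorem 5, page 282]{Ka} to the divergence-form operator $\mbox{div}(\sigma A\nabla \cdot)$, in exactly the same way that Theorem \ref{theorem1} was obtained by applying it to $\mathcal{L}_\sigma$. First I would fix a domain $\Omega_1$ of class $C^2$ with $\Omega_1\Supset \Omega_0\Supset \Omega$ and extend $\sigma$ together with each coefficient $a^{ij}$ to $C^{1,\alpha}(\mathbb{R}^n)$ by means of Theorem \ref{exth} (applied to $\sigma$, whose positivity is preserved, and componentwise to the symmetric matrix $A$). Writing $B=\sigma A$, the extended coefficient $B$ is symmetric, belongs to $C^{1,\alpha}(\overline{\Omega_1})$ with a norm bounded by a constant depending only on $\kappa$ and $\mu$, and is uniformly elliptic on a neighborhood of $\overline{\Omega}$; shrinking $\Omega_1$ toward $\Omega$ if necessary, one arranges uniform ellipticity of $B$ on $\overline{\Omega_1}$ with constants depending only on $\kappa$ and $\mu$. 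These are precisely the hypotheses needed to run Levi's parametrix method for $\mbox{div}(B\nabla \cdot )$ on $\Omega_1$, exactly as in the treatment of Theorem \ref{theorem1}.

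Next I would identify the canonical parametrix. Freezing the coefficient at the pole $y$ yields the constant-coefficient operator $\sum_{i,j}(\sigma a^{ij})(y)\partial_i\partial_j$, whose fundamental solution is
\[
\frac{[\det B(y)]^{-1/2}}{(n-2)|\mathbb{S}^{n-1}|}[B^{-1}(y)(x-y)\cdot (x-y)]^{(2-n)/2}.
\]
Using $\det B(y)=\sigma(y)^n\det A(y)$ and $B^{-1}(y)=\sigma(y)^{-1}A^{-1}(y)$, the powers of $\sigma(y)$ collapse to a single factor $\sigma(y)^{-1}$, and the expression above is seen to coincide with $H_\sigma(x,y)$; this is the content of \cite[Formula (2.4) in page 258]{Ka}. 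With this matching in hand, \cite[Theorem 5, page 282]{Ka} provides, for each $y\in \Omega_0\setminus \overline{\Omega}$, a function $\tilde{u}_\sigma^y\in C^2(\overline{\Omega})$ solving $\mbox{div}(\sigma A\nabla u)=0$ in $\Omega$, that is $\tilde{u}_\sigma^y\in \tilde{\mathscr{S}}_\sigma$, whose singularity at $y$ is modeled on $H_\sigma(\cdot ,y)$.

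Finally, the two claimed estimates are the standard remainder bounds of Levi's construction: the H\"older exponent $\alpha$ of the coefficient $B$ produces $|\tilde{u}_\sigma^y(x)-H_\sigma(x,y)|\le C|x-y|^{2-n+\alpha}$ and $|\nabla \tilde{u}_\sigma^y(x)-\nabla H_\sigma(x,y)|\le C|x-y|^{1-n+\alpha}$, uniformly for $x\in \overline{\Omega}$ and $y\in \Omega_0\setminus \overline{\Omega}$, with a constant $C$ controlled by the ellipticity constants, the extension constant of Theorem \ref{exth}, and the $C^{1,\alpha}$ norm of $B$; hence $C=C(n,\Omega ,\kappa ,\mu ,\alpha)$. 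The only genuinely non-routine point I expect is the verification that the explicit $H_\sigma$ is indeed the frozen-coefficient fundamental solution of the divergence operator $\mbox{div}(\sigma A\nabla \cdot)$ — equivalently, that the normalization $\sigma(y)^{-1}[\det A(y)]^{-1/2}$ is the correct one, rather than the one attached to the reduced non-divergence operator with principal part $A$ alone. Once this normalization is confirmed, the entire construction and its error estimates are furnished by \cite{Ka} as a black box, precisely as in the proof of Theorem \ref{theorem1}.
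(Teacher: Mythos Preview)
Your proposal is correct and matches the paper's own approach: the paper states Theorem~\ref{theorem6.1} as a direct consequence of \cite[Theorem 5, page 282]{Ka}, after noting that $\sigma$ and $A$ are extended via Theorem~\ref{exth} and that $H_\sigma$ is the canonical parametrix (\cite[Formula (2.4) in page 258]{Ka}), exactly as you outline. Your write-up supplies more detail than the paper (in particular the verification that the frozen-coefficient fundamental solution of $\mbox{div}(\sigma A\nabla\cdot)$ coincides with $H_\sigma$), but the strategy is identical.
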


On the other hand, as for the isotropic case, for all $\sigma_j\in \Sigma$ and $u_j\in \tilde{\mathscr{S}}_{\sigma_j}$, $j=1,2$, the following identity holds
\begin{equation}\label{6.3}
\int_\Omega (\sigma_1-\sigma_2) A\nabla u_1\cdot \nabla u_2dx=\langle (\tilde{\Lambda}_{\sigma_1}-\tilde{\Lambda}_{\sigma_2})v_1,v_2\rangle_{1/2},
\end{equation}
where we set $v_j=u_j-\int_\Omega u_jdx$, $j=1,2$.

In light of \eqref{6.2}, \eqref{6.3} and Theorem \ref{theorem6.1} we can mimic the proof of Theorem \ref{theorem-i1} and Corollary \ref{corollary-i1} in order to obtain the following theorem (we observe that Theorem \ref{thm-Al90} still holds if $L_{\sigma_j}$ is substituted by the operator $\mathrm{div}(\sigma_jA\cdot \nabla \cdot)$).

\begin{theorem}\label{theorem-6.2}
If $\Omega$ is of class $C^{1,1}$ then, for all $\sigma_1,\sigma_2\in \Sigma$, we have
\begin{align*}
& \|\sigma_1-\sigma_2 \|_{C(\Gamma)}\le C\|\tilde{\Lambda}_{\sigma_1}- \tilde{\Lambda}_{\sigma_2}\|,
\\
& \|\nabla (\sigma_1-\sigma_2) \|_{C(\Gamma)}\le C\|\tilde{\Lambda}_{\sigma_1}- \tilde{\Lambda}_{\sigma_2}\|^{\alpha/(\alpha+1)}, 
\end{align*}
where $C=C(n,\Omega ,\kappa, \mu,\alpha )>0$ is a constant.
\end{theorem}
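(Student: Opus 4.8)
The plan is to transcribe the arguments of Subsections~\ref{subsection2.1} and \ref{subsection2.2} almost verbatim, replacing the scalar pairing $\nabla u_1\cdot\nabla u_2$ by the conormal pairing $A\nabla u_1\cdot\nabla u_2$ throughout, the singular solutions of Theorem~\ref{theorem1} by those of Theorem~\ref{theorem6.1}, and the identity behind \eqref{3.5} by the anisotropic identity \eqref{6.3}. The cone geometry near $\Gamma$ (the radii $R$, $\mathfrak{r}$, the aperture $\theta$, the points $x_\delta,y_\delta$ and the map $\mathfrak{p}$) depends only on $\Omega$ and is therefore unaffected. Since $\mu^{-1}|\xi|^2\le A(x)\xi\cdot\xi\le\mu|\xi|^2$, every energy and trace estimate used in the isotropic proof survives with constants depending in addition on $\mu$; in particular the bound $\|v_j\|_{H^{1/2}(\Gamma)}\le C\delta^{(2-n)/2}$, with $v_j=\tilde{u}_{\sigma_j}^{y_\delta}-\int_\Omega \tilde{u}_{\sigma_j}^{y_\delta}$, is unchanged up to the value of $C$.

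First I would establish the anisotropic analogue of Lemma~\ref{lemma3.1}. Writing $u_j=\tilde{u}_{\sigma_j}^{y_\delta}$ and expanding $A\nabla u_1\cdot\nabla u_2$ around the leading term $A\nabla H_{\sigma_1}\cdot\nabla H_{\sigma_2}$, the two gradient estimates of Theorem~\ref{theorem6.1} show that the remaining cross terms are of order $|x-y_\delta|^{2-2n+\alpha}$, hence subordinate to the lower bound $\mathfrak{c}^{-1}|x-y_\delta|^{2-2n}$ furnished by \eqref{6.2}. Consequently there exist $\mathfrak{r}$ and $\delta_0$, now also depending on $\mu$, so that $A\nabla u_1\cdot\nabla u_2\ge C|x-y_\delta|^{2-2n}$ on $B(x_0,\mathfrak{r})\cap\Omega$ for $0<\delta\le\delta_0$, i.e. the exact counterpart of \eqref{lem3.1}.

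With this positivity in hand the first inequality follows as in Subsection~\ref{subsection2.1}: I would fix $x_0\in\Gamma$ realizing $\|\sigma_1-\sigma_2\|_{C(\Gamma)}$, use $\|\sigma_1-\sigma_2\|_{C(\Gamma)}\le(\sigma_1-\sigma_2)(x)+2\kappa|x-x_0|^\alpha$ to bound from above the quantity $\|\sigma_1-\sigma_2\|_{C(\Gamma)}\int_{B(x_0,\mathfrak{r})\cap\Omega}A\nabla u_1\cdot\nabla u_2$, estimate the singular integral from below by $C\delta^{2-n}$, control the remainder over $\Omega\setminus B(x_0,\mathfrak{r})$ by $C$ and the $|x-x_0|^\alpha$-weighted term by $C\delta^{2-n+\alpha}$, and finally invoke \eqref{6.3} together with the $H^{1/2}(\Gamma)$ bound on $v_j$ to replace the interior integral by $C\delta^{2-n}\|\tilde{\Lambda}_{\sigma_1}-\tilde{\Lambda}_{\sigma_2}\|$; letting $\delta\to 0$ gives the $C(\Gamma)$ estimate. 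For the second inequality I would repeat Subsection~\ref{subsection2.2}: apply Lemma~\ref{lemma2} to $\sigma_1-\sigma_2$ at a point $x_0$ realizing $\|\partial_\nu(\sigma_1-\sigma_2)\|_{C(\Gamma)}$ to obtain the $\mathrm{dist}(x,\Gamma)$-weighted inequality \eqref{3.6}, bound the associated weighted conormal integral from below via \eqref{6.2}, and combine with the $C(\Gamma)$ estimate and \eqref{6.3}. To lift the low-dimensional restriction I would use the higher-order singular solutions, which by the remark preceding the statement remain valid for $\mathrm{div}(\sigma_j A\nabla\cdot)$, and then minimize over $\delta$, producing the exponent $\alpha/(\alpha+1)$ for $\partial_\nu(\sigma_1-\sigma_2)$. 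The bound on $\nabla(\sigma_1-\sigma_2)$ then follows from the interpolation inequality \cite[Lemma~3.2 in page~264]{Al90}, exactly as in Corollary~\ref{corollary-i1}.

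The step I expect to demand the most care is the anisotropic version of Lemma~\ref{lemma3.1}: one must verify that it is the pairing $A\nabla u_1\cdot\nabla u_2$, and not merely a quantity like $|A\nabla H_\sigma|$, that keeps a strictly positive lower bound once the non-leading terms of the parametrix expansion are taken into account. This is exactly what the two-sided bound \eqref{6.2} combined with the subordinate error estimates of Theorem~\ref{theorem6.1} provides; beyond this point the proof is a constant-tracking transcription of the isotropic argument, with $\mu$ entering through the ellipticity of $A$.
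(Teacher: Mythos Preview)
Your proposal is correct and follows exactly the approach the paper indicates: the paper's own justification consists of the single sentence preceding the statement, instructing the reader to mimic the proofs of Theorem~\ref{theorem-i1} and Corollary~\ref{corollary-i1} using \eqref{6.2}, \eqref{6.3}, Theorem~\ref{theorem6.1}, and the anisotropic version of Theorem~\ref{thm-Al90}. You have simply fleshed out that plan, including the one genuinely new ingredient---the anisotropic analogue of Lemma~\ref{lemma3.1} obtained from \eqref{6.2} and the error bounds of Theorem~\ref{theorem6.1}---so nothing further is needed.
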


\begin{lemma}\label{lemma6.1}
Let $\ell \ge 2$ be an integer and $f\in C^{\ell,\alpha}(\overline{\Omega_{\varrho_0}})$, for some $\varrho_0>0$, satisfying, for some $\varkappa >0$,
\[
\|f\|_{C^{\ell,\alpha}(\overline{\Omega_{\varrho_0}})}\le \varkappa.
\]
Let $x_0\in \Gamma$ so that
\[
(-1)^\ell\partial_\nu ^\ell f(x_0)=|\partial_\nu ^\ell f(x_0)|.
\]
Then the following inequality holds
\begin{align*}
|\partial_\nu ^\ell f(x_0)|\mathrm{dist}(x,\Gamma)^\ell \le f(x)&-f(\mathfrak{p}(x))
\\
&+\sum_{j=1}^{\ell-1}(-1)^{j+1}\partial_\nu^jf(\mathfrak{p}(x))\mathrm{dist}(x,\Gamma)^j+\varkappa '|x-x_0|^{\ell+\alpha},
\end{align*}
where $\varkappa'=\varkappa'(n,\Omega ,\varkappa,\ell)>0$ is a constant.
\end{lemma}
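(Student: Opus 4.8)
The plan is to reduce the claim to a one–dimensional Taylor expansion along the inward normal segment issued from $\mathfrak{p}(x)$, in the same spirit as the proof of Lemma \ref{lemma2}, to which the present statement reduces when $\ell=1$. Fix $x\in\Omega_{\dot{\varrho}}$ (shrinking $\dot{\varrho}$ beforehand so that $\dot{\varrho}\le\varrho_0$), and set $\tilde{x}=\mathfrak{p}(x)$, $d=\mathrm{dist}(x,\Gamma)=|x-\tilde{x}|$ and $x_t=\tilde{x}-td\,\nu(\tilde{x})$ for $t\in[0,1]$. By Proposition \ref{gproposition} the whole segment $\{x_t;\;0\le t\le 1\}$ stays in $\Omega_{\dot{\varrho}}\subset\overline{\Omega_{\varrho_0}}$, the projection is constant along it, and --- the key geometric point --- the differentiation direction $\nu(\tilde{x})$ is frozen. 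Consequently $\phi(t)=f(x_t)$ is $C^\ell$ on $[0,1]$ with $\phi^{(j)}(t)=(-1)^jd^j\partial_\nu^j f(x_t)$, where here and below $\partial_\nu^j=(\nu(\tilde{x})\cdot\nabla)^j$ denotes the constant–coefficient operator obtained by freezing the normal at $\tilde{x}$.

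First I would apply Taylor's formula with integral remainder at order $\ell$ to $\phi$ between $0$ and $1$:
\[
f(x)=f(\tilde{x})+\sum_{j=1}^{\ell-1}\frac{(-1)^j}{j!}d^j\partial_\nu^jf(\tilde{x})+\frac{(-1)^\ell}{\ell!}d^\ell\partial_\nu^\ell f(\tilde{x})+\mathcal{R},
\]
where
\[
\mathcal{R}=\frac{(-1)^\ell}{(\ell-1)!}d^\ell\int_0^1(1-t)^{\ell-1}\bigl[\partial_\nu^\ell f(x_t)-\partial_\nu^\ell f(\tilde{x})\bigr]\,dt.
\]
Isolating the order–$\ell$ term and moving the lower–order terms to the right reproduces, up to their Taylor weights, the sum $\sum_{j=1}^{\ell-1}(-1)^{j+1}\partial_\nu^jf(\mathfrak{p}(x))\,\mathrm{dist}(x,\Gamma)^j$ appearing in the statement. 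To control $\mathcal{R}$, note that $\partial_\nu^\ell f=(\nu(\tilde{x})\cdot\nabla)^\ell f$ is a linear combination, with coefficients of modulus at most $1$, of the derivatives $\partial^\beta f$ with $|\beta|=\ell$; hence its $\alpha$–Hölder seminorm is bounded by $C(n,\ell)\|f\|_{C^{\ell,\alpha}(\overline{\Omega_{\varrho_0}})}\le C(n,\ell)\varkappa$. Since $|x_t-\tilde{x}|=td\le d$, this gives $|\mathcal{R}|\le C\varkappa d^{\ell+\alpha}$, and because $\tilde{x}=\mathfrak{p}(x)$ is the nearest boundary point we have $d=|x-\tilde{x}|\le|x-x_0|$, so $|\mathcal{R}|\le C\varkappa|x-x_0|^{\ell+\alpha}$.

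The step I expect to be the main obstacle is the replacement of $\partial_\nu^\ell f(\tilde{x})$ by $\partial_\nu^\ell f(x_0)$, needed in order to invoke the sign hypothesis $(-1)^\ell\partial_\nu^\ell f(x_0)=|\partial_\nu^\ell f(x_0)|$. Writing $\partial_\nu^\ell f(\tilde{x})-\partial_\nu^\ell f(x_0)=(\nu(\tilde{x})\cdot\nabla)^\ell f(\tilde{x})-(\nu(x_0)\cdot\nabla)^\ell f(x_0)$, I would split this difference into a change of evaluation point and a change of differentiation direction. The former is controlled by the same Hölder seminorm by $C\varkappa|\tilde{x}-x_0|^\alpha$. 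The latter is where the $C^{1,1}$ regularity of $\Omega$ is essential: the Gauss map $\nu$ is then Lipschitz on $\Gamma$, so $|\nu(\tilde{x})-\nu(x_0)|\le C|\tilde{x}-x_0|$, and since the top–order derivatives $\partial^\beta f$, $|\beta|=\ell$, are bounded by $\varkappa$, this contributes at most $C\varkappa|\tilde{x}-x_0|$. Using $|\tilde{x}-x_0|\le 2|x-x_0|$ together with $|x-x_0|\le\mathrm{diam}\,\Omega$, both contributions are $\le C\varkappa|x-x_0|^\alpha$; after multiplication by $d^\ell\le|x-x_0|^\ell$ the total swap error is $\le C\varkappa|x-x_0|^{\ell+\alpha}$.

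It then remains to collect the estimates. Multiplying the isolated order–$\ell$ identity by the appropriate sign, substituting $(-1)^\ell\partial_\nu^\ell f(x_0)=|\partial_\nu^\ell f(x_0)|$, transferring the lower–order terms to the right–hand side, and absorbing $\mathcal{R}$ together with the base–point swap error into a single term $\varkappa'|x-x_0|^{\ell+\alpha}$ yields the announced inequality, with $\varkappa'=\varkappa'(n,\Omega,\varkappa,\ell)$. I note that the only genuinely new difficulty compared with the case $\ell=1$ treated in Lemma \ref{lemma2} is the change–of–direction estimate for the highest–order normal derivative, which crucially uses the Lipschitz continuity of the unit normal guaranteed by the $C^{1,1}$ assumption.
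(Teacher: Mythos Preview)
Your proposal is correct and follows precisely the approach the paper indicates: the paper's own proof is the single sentence ``We use Taylor's formula and we proceed as in the proof of Lemma \ref{lemma2},'' and you have carried out exactly that program, including the higher-order analogue of the base-point swap $\tilde{x}\to x_0$ with the Lipschitz control of $\nu$ supplied by the $C^{1,1}$ regularity. Your remark ``up to their Taylor weights'' correctly flags that the lemma as stated suppresses the factors $1/j!$ coming from Taylor's formula; this is a harmless imprecision in the statement (the constants are irrelevant for the application to Theorem \ref{theorem6.3}) and not a defect of your argument.
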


\begin{proof}
We use Taylor's formula and we proceed as in the proof of Lemma \ref{lemma2}.
\end{proof}

We set, for fixed $\varrho_0>0$, $\Sigma^0=\Sigma^1=\Sigma$ and
\[
\Sigma^\ell=\{ \sigma \in \Sigma \cap C^{\ell,\alpha}(\overline{\Omega_{\varrho_0}}),\; \|\sigma\|_{C^{\ell,\alpha}(\overline{\Omega_{\varrho_0}})}\le \kappa\},\quad \ell \ge 2.
\]
We also introduce the notations
\[
\gamma_0=1\quad \mbox{and}\quad \gamma_j=\prod_{j=1}^\ell \frac{\alpha}{\alpha +j},\; \ell \ge 1.
\]
An extension of the proof \eqref{thm-i1.2} of Theorem \ref{theorem-i1} together with an induction argument with respect to $\ell$ yield the following result.

\begin{theorem}\label{theorem6.3}
Suppose that $\Omega$ is of class $C^{1,1}$ and $\ell$ is a non negative integer. We have, for all $\sigma_1,\sigma_2\in \Sigma^\ell$,
\[
 \|\partial_\nu ^\ell (\sigma_1-\sigma_2) \|_{C(\Gamma)}\le C\|\tilde{\Lambda}_{\sigma_1}- \tilde{\Lambda}_{\sigma_2}\|^{\gamma_\ell}, 
\]
where $C=C(n,\Omega ,\kappa,\varrho_0, \alpha ,\ell)>0$ is a constant
\end{theorem}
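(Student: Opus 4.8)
The plan is to argue by induction on $\ell$, the cases $\ell=0$ and $\ell=1$ being exactly the two inequalities of Theorem~\ref{theorem-6.2}. Assume then $\ell\ge 2$ and that $\|\partial_\nu^j(\sigma_1-\sigma_2)\|_{C(\Gamma)}\le C\|\tilde{\Lambda}_{\sigma_1}-\tilde{\Lambda}_{\sigma_2}\|^{\gamma_j}$ holds for every $0\le j\le \ell-1$. Write $\sigma=\sigma_1-\sigma_2$ and pick $x_0\in\Gamma$ realizing $|\partial_\nu^\ell\sigma(x_0)|=\|\partial_\nu^\ell\sigma\|_{C(\Gamma)}$, normalizing the sign so that $(-1)^\ell\partial_\nu^\ell\sigma(x_0)=|\partial_\nu^\ell\sigma(x_0)|$. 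Since $\sigma_1,\sigma_2\in\Sigma^\ell$ we have $\|\sigma\|_{C^{\ell,\alpha}(\overline{\Omega_{\varrho_0}})}\le 2\kappa$, so Lemma~\ref{lemma6.1} applies to $f=\sigma$ and yields, on $B(x_0,\rho)\cap\Omega$ for a fixed small $\rho$,
\[
\|\partial_\nu^\ell\sigma\|_{C(\Gamma)}\,\mathrm{dist}(x,\Gamma)^\ell\le \sigma(x)-\sigma(\mathfrak{p}(x))+\sum_{j=1}^{\ell-1}(-1)^{j+1}\partial_\nu^j\sigma(\mathfrak{p}(x))\mathrm{dist}(x,\Gamma)^j+\varkappa'|x-x_0|^{\ell+\alpha}.
\]

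Next I would fix an integer $L$ (to be taken large) and use the singular solutions $u_1,u_2$ of order $L$ for the operators $\mathrm{div}(\sigma_jA\nabla\cdot)$, provided by the anisotropic analogue of Theorem~\ref{thm-Al90} (whose $L=0$ prototype is the two-sided bound \eqref{6.2}); thus $K:=A\nabla u_1\cdot\nabla u_2\ge 0$ satisfies $\mathfrak{c}^{-1}|x-y_\delta|^{2-2(n+L)}\le K\le \mathfrak{c}|x-y_\delta|^{2-2(n+L)}$. Multiplying the displayed inequality by $K\ge0$ and integrating over $B(x_0,\rho)\cap\Omega$, the left-hand side is bounded below, exactly as in Subsection~\ref{subsection2.2}, by restricting to the interior ball $B(x_\delta,\delta\sin\theta/4)$, giving a term of size $\|\partial_\nu^\ell\sigma\|_{C(\Gamma)}\,\delta^{\ell+2-n-2L}$. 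On the right-hand side the term $\int\sigma K$ is turned into $\langle(\tilde{\Lambda}_{\sigma_1}-\tilde{\Lambda}_{\sigma_2})v_1,v_2\rangle$ through \eqref{6.3} after extending the integral to $\Omega$ (the complementary integral being a fixed constant since $K$ is bounded off $B(x_0,\rho)$), and the bound $\|v_j\|_{H^{1/2}(\Gamma)}\le C\delta^{(2-n-2L)/2}$ coming from \eqref{thm1.2}-type estimates controls it by $C\delta^{2-n-2L}\|\tilde{\Lambda}_{\sigma_1}-\tilde{\Lambda}_{\sigma_2}\|$. The boundary terms $\sigma(\mathfrak{p}(x))$ and $\partial_\nu^j\sigma(\mathfrak{p}(x))$ are estimated by $\|\sigma\|_{C(\Gamma)}$ and $\|\partial_\nu^j\sigma\|_{C(\Gamma)}$ and, crucially, the induction hypothesis replaces these by $\|\tilde{\Lambda}_{\sigma_1}-\tilde{\Lambda}_{\sigma_2}\|^{\gamma_j}$; the Hölder remainder integrates to $C\delta^{\ell+\alpha+2-n-2L}$.

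Dividing by the lower bound $\delta^{\ell+2-n-2L}$ and choosing $L$ large enough that the fixed-constant contribution (which scales like $\delta^{\,n+2L-2-\ell}$) is dominated by $\delta^\alpha$, I expect to reach an inequality of the form
\[
C\|\partial_\nu^\ell\sigma\|_{C(\Gamma)}\le \sum_{j=0}^{\ell-1}\delta^{\,j-\ell}\,\|\tilde{\Lambda}_{\sigma_1}-\tilde{\Lambda}_{\sigma_2}\|^{\gamma_j}+\delta^\alpha,\qquad 0<\delta\le\delta_0 .
\]
To conclude I would take $\delta\sim\|\tilde{\Lambda}_{\sigma_1}-\tilde{\Lambda}_{\sigma_2}\|^{\gamma_\ell/\alpha}$, so that $\delta^\alpha\sim\|\tilde{\Lambda}_{\sigma_1}-\tilde{\Lambda}_{\sigma_2}\|^{\gamma_\ell}$, and check that each term $\delta^{j-\ell}\|\cdot\|^{\gamma_j}$ is also $\le C\|\cdot\|^{\gamma_\ell}$; this amounts to the purely combinatorial inequality $\gamma_\ell/\gamma_j=\prod_{i=j+1}^\ell\frac{\alpha}{\alpha+i}\le\frac{\alpha}{\alpha+\ell-j}$ for $0\le j\le\ell-1$, which follows from $(\alpha+1)\cdots(\alpha+\ell-j-1)\ge\alpha^{\ell-j-1}$. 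A preliminary reduction to the regime of small $\|\tilde{\Lambda}_{\sigma_1}-\tilde{\Lambda}_{\sigma_2}\|$ (enlarging $C$ otherwise) guarantees $\delta\le\delta_0$.

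The main obstacle is not conceptual but the careful bookkeeping of the powers of $\delta$ produced by the singular integrals: one must verify that each error contribution — the truncation from $B(x_0,\rho)\cap\Omega$ to $\Omega$, the Hölder remainder in Lemma~\ref{lemma6.1}, and the lower-order boundary terms fed in by induction — divides cleanly by the leading power $\delta^{\ell+2-n-2L}$ into one of the terms appearing above, with $L$ chosen once and for all large enough for all these estimates to hold simultaneously. The only genuinely new point beyond the $\ell=1$ analysis of Subsection~\ref{subsection2.2} is the verification that the optimization over $\delta$ reproduces precisely the exponent $\gamma_\ell=\prod_{j=1}^\ell\alpha/(\alpha+j)$, which rests on the combinatorial inequality above.
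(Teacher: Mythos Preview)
Your proposal is correct and follows exactly the route the paper indicates: the paper gives no detailed proof for this statement, merely saying that ``an extension of the proof of \eqref{thm-i1.2} of Theorem~\ref{theorem-i1} together with an induction argument with respect to $\ell$'' yields the result. You have carried out precisely this programme---induction on $\ell$, Lemma~\ref{lemma6.1} in place of Lemma~\ref{lemma2}, singular solutions of sufficiently high order $L$, and the final optimization in $\delta$---and your verification of the combinatorial inequality $\gamma_\ell/\gamma_j\le\alpha/(\alpha+\ell-j)$ is the one nontrivial point beyond the $\ell=1$ case, correctly handled.
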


The following lemma is obtained by iterating \cite[Lemma 3.2 in page 264]{Al90}.

\begin{lemma}\label{lemma6.2}
Let $\ell \ge 2$ an integer and $f\in C^{\ell,\alpha}(\overline{\Omega_{\varrho_0}})$. Then
\[
C\max_{|\beta|=\ell} \|\partial^\beta f\|_{C(\Gamma)}\le \|\partial_\nu ^{\ell-1}f\|_\ast^{\gamma_1}+\|\partial_\nu ^{\ell-2}f\|_\ast^{\gamma_1^2}+\ldots +\|f\|_\ast^{\gamma_1^\ell},
\]
where $C=C(n,\Omega ,\varrho_0,\ell )>0$ is a constant and
\[
\|f\|_\ast =\|\partial_\nu f\|_{C(\Gamma)}+\|f\|_{C(\Gamma)}.
\]
\end{lemma}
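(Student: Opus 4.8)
The plan is to reduce the estimate on all Cartesian derivatives of order $\ell$ to repeated applications of the first-order interpolation inequality of \cite[Lemma 3.2 in page 264]{Al90}. After substituting $\tilde{\nu}$ by $\nu$ and absorbing the bounded $C^{1,\alpha}$ seminorm into the constant, I will use that inequality in two guises: the full-gradient form
\[
\|\nabla g\|_{C(\Gamma)}\le C\|g\|_*^{\gamma_1},\qquad \|g\|_*=\|g\|_{C(\Gamma)}+\|\partial_\nu g\|_{C(\Gamma)},
\]
valid for $g\in C^{1,\alpha}(\overline{\Omega_{\varrho_0}})$, and its tangential consequence $\|\nabla_T g\|_{C(\Gamma)}\le C\|g\|_{C(\Gamma)}^{\gamma_1}$, where $\nabla_T$ denotes the gradient along $\Gamma$; the latter follows from the same boundary interpolation since $\nabla_T g$ depends only on $g_{|\Gamma}$. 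Working in boundary normal coordinates near $\Gamma$, which are available because $\Omega$ is of class $C^{1,1}$, every Cartesian derivative $\partial^\beta f$ with $|\beta|=\ell$ can be written on $\Gamma$ as a linear combination, with coefficients bounded in terms of the geometry of $\Gamma$, of the mixed derivatives $\nabla_T^{(m)}\partial_\nu^{\ell-m}f$, $m=0,\ldots,\ell$, up to terms of order at most $\ell-1$.

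First I would treat the two extreme cases $m=0$ and $m=1$ together by applying the full-gradient form to $g=\partial_\nu^{\ell-1}f$: since the normal and tangential components of $\nabla\partial_\nu^{\ell-1}f$ are exactly $\partial_\nu^\ell f$ and $\nabla_T\partial_\nu^{\ell-1}f$, this yields at once
\[
\|\partial_\nu^\ell f\|_{C(\Gamma)}+\|\nabla_T\partial_\nu^{\ell-1}f\|_{C(\Gamma)}\le C\|\partial_\nu^{\ell-1}f\|_*^{\gamma_1},
\]
which is precisely the first term of the asserted bound. For $2\le m\le \ell$ I peel off the tangential derivatives one at a time using the tangential form: writing $h=\partial_\nu^{\ell-m}f$,
\[
\|\nabla_T^{(m)}h\|_{C(\Gamma)}\le C\|\nabla_T^{(m-1)}h\|_{C(\Gamma)}^{\gamma_1}\le\cdots\le C\|h\|_{C(\Gamma)}^{\gamma_1^m}\le C\|\partial_\nu^{\ell-m}f\|_*^{\gamma_1^m},
\]
the last step being the trivial bound $\|\cdot\|_{C(\Gamma)}\le\|\cdot\|_*$. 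Assembling the contributions over $m=0,\ldots,\ell$, and recalling that the $\nabla_T^{(m)}\partial_\nu^{\ell-m}f$ exhaust the leading parts of the $\partial^\beta f$, $|\beta|=\ell$, gives
\[
C\max_{|\beta|=\ell}\|\partial^\beta f\|_{C(\Gamma)}\le \sum_{m=1}^{\ell}\|\partial_\nu^{\ell-m}f\|_*^{\gamma_1^m},
\]
which is the claimed inequality.

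The main work, and the step I expect to be the genuine obstacle, lies in the lower-order remainder generated at each stage. The passage to boundary normal coordinates and the commutators $[\partial_\nu,\nabla_T]$ produce terms of order at most $\ell-1$ whose coefficients involve derivatives of the unit normal, so one must assume $\Gamma$ (equivalently, the tangential frame) smooth enough to define $\partial_\nu^{\ell}$ and $\nabla_T^{(\ell)}$; these remainders I would dispose of by an induction on $\ell$, the inductive hypothesis bounding every derivative of order $\le \ell-1$ by the corresponding lower-order right-hand side, which is itself dominated by the terms already present. In parallel, one must verify that the intermediate functions $\nabla_T^{(m-1)}h$ to which the interpolation inequality is applied have uniformly bounded $C^{1,\alpha}(\Gamma)$ seminorms, so that the constant in each application stays controlled; this is where the hypothesis $f\in C^{\ell,\alpha}(\overline{\Omega_{\varrho_0}})$ enters, the seminorms being estimated through $\|f\|_{C^{\ell,\alpha}(\overline{\Omega_{\varrho_0}})}$ and absorbed into $C$. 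Once this bookkeeping is organized by the induction on $\ell$, the estimate follows by combining the per-$m$ bounds as above.
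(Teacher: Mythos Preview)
Your proposal is correct and follows exactly the route the paper indicates: the paper gives no proof beyond the single sentence ``obtained by iterating \cite[Lemma 3.2 in page 264]{Al90}'', and what you have written is precisely a careful unpacking of that iteration---splitting $\partial^\beta f$ into mixed normal/tangential pieces, using the full-gradient form once on $\partial_\nu^{\ell-1}f$ and the tangential form $m$ times on $\partial_\nu^{\ell-m}f$, and cleaning up the commutator remainders by induction on $\ell$. Your observation that the intermediate $C^{1,\alpha}$ seminorms must be controlled via $\|f\|_{C^{\ell,\alpha}(\overline{\Omega_{\varrho_0}})}$, and your flag that defining $\partial_\nu^\ell$ and $\nabla_T^{(\ell)}$ tacitly needs more boundary regularity than $C^{1,1}$, are both accurate and go beyond what the paper states explicitly.
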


In light of this lemma, Theorem \ref{theorem6.3} imply in a straightforward manner the following corollary.

\begin{corollary}\label{corollary6.3}
Assume that $\Omega$ is of class $C^{1,1}$ and let $\ell\ge 2$ be an integer. We have, for all $\sigma_1,\sigma_2\in \Sigma^\ell$,
\[
 \max_{|\beta|=\ell}\|\partial ^\beta (\sigma_1-\sigma_2) \|_{C(\Gamma)}\le C\|\tilde{\Lambda}_{\sigma_1}- \tilde{\Lambda}_{\sigma_2}\|^{\gamma_1\gamma_\ell}, 
\]
where $C=C(n,\Omega ,\kappa,\varrho_0, \alpha ,\ell)>0$ is a constant
\end{corollary}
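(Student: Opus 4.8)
The plan is to derive Corollary \ref{corollary6.3} from Theorem \ref{theorem6.3} by combining the control on the \emph{normal} derivatives at the boundary with Lemma \ref{lemma6.2}, which lets one bootstrap from normal derivatives to \emph{all} derivatives of a given order. First I would set $f=\sigma_1-\sigma_2$ and observe that $f\in C^{\ell,\alpha}(\overline{\Omega_{\varrho_0}})$ whenever $\sigma_1,\sigma_2\in\Sigma^\ell$, so that Lemma \ref{lemma6.2} applies to $f$. The left-hand side of the lemma, $\max_{|\beta|=\ell}\|\partial^\beta f\|_{C(\Gamma)}$, is exactly the quantity we wish to bound, and the right-hand side is expressed through the seminorm $\|\cdot\|_\ast=\|\partial_\nu(\cdot)\|_{C(\Gamma)}+\|\cdot\|_{C(\Gamma)}$ applied to the iterated normal derivatives $\partial_\nu^{\ell-1}f,\dots,f$.

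Next I would bound each term $\|\partial_\nu^{\ell-j}f\|_\ast$ appearing in Lemma \ref{lemma6.2} by the Dirichlet-to-Neumann discrepancy. Writing $\Delta=\|\tilde{\Lambda}_{\sigma_1}-\tilde{\Lambda}_{\sigma_2}\|$, Theorem \ref{theorem6.3} gives, for each integer $m$ with $0\le m\le \ell$,
\[
\|\partial_\nu^{m}f\|_{C(\Gamma)}\le C\,\Delta^{\gamma_m}.
\]
In particular $\|\partial_\nu^{\ell-j}f\|_\ast=\|\partial_\nu^{\ell-j+1}f\|_{C(\Gamma)}+\|\partial_\nu^{\ell-j}f\|_{C(\Gamma)}$ is controlled by $C\,\Delta^{\gamma_{\ell-j+1}}+C\,\Delta^{\gamma_{\ell-j}}$; since the sequence $\gamma_0>\gamma_1>\dots$ is strictly decreasing (each factor $\alpha/(\alpha+j)<1$), the dominant behaviour as $\Delta\to 0$ is governed by the smaller exponent, so $\|\partial_\nu^{\ell-j}f\|_\ast\le C\,\Delta^{\gamma_{\ell-j+1}}$ for $\Delta$ small. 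Substituting these bounds into the right-hand side of Lemma \ref{lemma6.2} yields a sum of powers $\Delta^{\gamma_1\gamma_{\ell-j+1}}$; again the slowest-decaying term dominates, and a short check shows that the term coming from $\|f\|_\ast^{\gamma_1^\ell}$, equivalently from the lowest-order normal data, produces the exponent $\gamma_1\gamma_\ell$, which is the smallest exponent present. This is precisely the exponent claimed in the corollary.

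I would make the dominance step explicit: for $\Delta\le 1$ one has $\Delta^{a}\le \Delta^{b}$ whenever $a\ge b$, so after verifying that $\gamma_1\gamma_\ell\le \gamma_1\gamma_{\ell-j+1}$ for every $j$ (again a consequence of the monotonicity of $(\gamma_m)$) every summand in Lemma \ref{lemma6.2} is bounded by a constant times $\Delta^{\gamma_1\gamma_\ell}$. Collecting the finitely many terms into a single generic constant $C=C(n,\Omega,\kappa,\alpha,\ell)$ then gives
\[
\max_{|\beta|=\ell}\|\partial^\beta(\sigma_1-\sigma_2)\|_{C(\Gamma)}\le C\,\Delta^{\gamma_1\gamma_\ell}.
\]
The main obstacle, such as it is, is purely bookkeeping: one must track the exact exponents $\gamma_m=\prod_{j=1}^{m}\alpha/(\alpha+j)$ through the iterated interpolation in Lemma \ref{lemma6.2} and confirm that the worst exponent is indeed $\gamma_1\gamma_\ell$ rather than some other product. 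Because all exponents are positive and the map $\Delta\mapsto\Delta^{s}$ is increasing in $s$ on $(0,1]$, no genuine analytic difficulty arises once the monotonicity of the $\gamma_m$ is in hand; the restriction to small $\Delta$ is harmless since for $\Delta$ bounded away from $0$ the inequality holds trivially after enlarging $C$.
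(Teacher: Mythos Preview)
Your approach is exactly the paper's: combine Lemma~\ref{lemma6.2} with Theorem~\ref{theorem6.3} and keep the worst exponent. The strategy is correct and the final exponent $\gamma_1\gamma_\ell$ is right, but your bookkeeping slips in two places. First, the $j$-th term in Lemma~\ref{lemma6.2} carries the power $\gamma_1^{\,j}$, not $\gamma_1$, so after substituting $\|\partial_\nu^{\ell-j}f\|_\ast\le C\Delta^{\gamma_{\ell-j+1}}$ you obtain $\Delta^{\gamma_1^{\,j}\gamma_{\ell-j+1}}$, not $\Delta^{\gamma_1\gamma_{\ell-j+1}}$. Second, the worst (smallest) exponent comes from the \emph{first} term $\|\partial_\nu^{\ell-1}f\|_\ast^{\gamma_1}$, i.e.\ the highest-order normal data, not from $\|f\|_\ast^{\gamma_1^\ell}$: since $\gamma_{\ell-j+1}/\gamma_{\ell-j}=\alpha/(\alpha+\ell-j+1)<\alpha/(\alpha+1)=\gamma_1$ for $j<\ell$, the sequence $j\mapsto\gamma_1^{\,j}\gamma_{\ell-j+1}$ is increasing, with minimum $\gamma_1\gamma_\ell$ at $j=1$. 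The last term gives $\gamma_1^{\ell+1}>\gamma_1\gamma_\ell$. Once this is corrected, the rest of your argument goes through verbatim.
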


We mention that the case of general anisotropic conductivities can be reformulated as a geometric inverse problem. Precisely the problem is to know whether it is possible to recover the metric of a compact Riemannian manifold with boundary, from the corresponding Dirichlet-to-Neumann map. This problem was solved by Guillarmou and Tzou in dimension two \cite{GT}. In dimensions greater or equal to three the answer is positive for conformally transversally anisotropic manifolds, under the assumption  that  the  geodesic  X-ray  transform  on  the  transversal  manifold  is injective \cite{DKLS,DKLLS}. Recent progress toward solving the general case can be found in \cite{KLS}.
 
 Concerning stability inequalities we refer to the earlier work by Kang and Yun \cite{KY} in which the authors provide H\"older stability inequality at the boundary of anisotropic conductivities from local Dirichlet-to-Neumann map. While Caro and Salo \cite{CS} obtained logarithmic type stability inequality for the conformal factor in admissible geometries. 
 
For non uniqueness results on the determination of anisotropic conductivities from partial boundary data we refer to the recent paper by Daud\'e, Kamran and Nicoleau \cite{DKN} and references therein.

\subsection{Isotropic case with partial data}\label{subsection4.2}

Throughout this section we use the same notations as in Sections \ref{section2} and \ref{section3}. Fix $\hat{x}\in \mathbb{R}^n$ outside the closure of the convex hull of $\Omega$ and denote by $\Gamma_0$ an open neighborhood of the set
\[
F=\{ x\in \Gamma ;\; (x-\hat{x})\cdot \nu (x)\le 0\}.
\]
Pick $\chi \in C_0^\infty (\Gamma_0)$ so that $0\le \chi \le 1$ and $\chi=1$ in a neighborhood of $F$. We then introduce the following partial Dirichlet-to-Neumann map
\[
\hat{\Lambda}_\sigma =\chi \Lambda_\sigma ,\quad \sigma \in \Sigma .
\]
 
We consider the following subset of $\dot{\Sigma}$, where $0<s<1/2$,
\[
\hat{\Sigma}=\left\{ \sigma \in W^{2,\infty}(\mathbb{R}^n)\cap H^{2+s}(\mathbb{R}^n);\; \mbox{supp}(\sigma)\subset \overline{\Omega},
\; \|\sigma \|_{W^{2,\infty}(\mathbb{R}^n)\cap H^{2+s}(\mathbb{R}^n)}\le \kappa\right\}.
\]
In the sequel we use  that $\hat{\Sigma}$ is continuously embedded in $C^{1,1/2}(\overline{\Omega})$.

Let $\sigma_1,\sigma_2\in \hat{\Sigma}$, $a=\sqrt{\sigma_1\sigma_2}$, $f=2a(q_{\sigma_1}-q_{\sigma_2})$ and $w=\ln (\sigma_1/\sigma_2)$. As we have seen in the proof of Proposition \ref{proposition4.1}, $w$ is the solution of the BVP
\[
\mathrm{div}(a\nabla w)=f.
\]
From the results in \cite[Section 4.5 in page 168]{Chou}, there exists three constants $C=C(n,\Omega ,\kappa ,\Gamma_0,s)$, $c=c(n,\Omega ,\kappa ,\Gamma_0,s)$ and $\beta=\beta(n,\Omega ,\kappa ,\Gamma_0,s)$ so that, for any $0<\epsilon<1$, we have
\begin{align*}
&C\left(\|w\|_{C(\Gamma)}+\|\nabla w\|_{C(\Gamma)}\right)\le \epsilon ^\beta\|w\|_{C^{1,\alpha}(\overline{\Omega})}
\\
&\hskip 3cm + e^{c/\epsilon}\left( \|w\|_{L^2(\Gamma)}+\|\nabla w\|_{L^2(\Gamma_0)}+\|q_{\sigma_1}-q_{\sigma_2}\|_{L^2(\Omega)}\right),
\end{align*}
from which we derive similarly as in the proof of Proposition \ref{proposition4.1}
\begin{align*}
&C\left(\|\sigma_1-\sigma_2\|_{C(\Gamma)}+\|\nabla (\sigma_1-\sigma_2)\|_{C(\Gamma)}\right)\le \epsilon ^\beta 
\\
&\hskip 1cm + e^{c/\epsilon}\left( \|\sigma_1-\sigma_2\|_{L^2(\Gamma_0)}+\|\nabla (\sigma_1-\sigma_2)\|_{L^2(\Gamma_0)}+\|q_{\sigma_1}-q_{\sigma_2}\|_{L^2(\Omega)}\right).
\end{align*}
One more time, Proposition \ref{proposition4.1} yields
\begin{align}
&C\|\sigma_1-\sigma_2\|_{H^1(\Omega)}\le \epsilon ^\beta \label{7.1}
\\
&\hskip 1cm + e^{c/\epsilon}\left( \|\sigma_1-\sigma_2\|_{L^2(\Gamma_0)}+\|\nabla (\sigma_1-\sigma_2)\|_{L^2(\Gamma_0)}+\|q_{\sigma_1}-q_{\sigma_2}\|_{L^2(\Omega)}\right).\nonumber
\end{align}

Denote by $\ell \ge 2 $ the smallest integer satisfying 
\[
\frac{\ell+1}{\ell-1}\le 1+\alpha.
\]
According to \cite[Theorem 1.4 in page 724]{KY}, we get
\begin{equation}\label{7.2}
\|\sigma_1-\sigma_2\|_{C(\Gamma_0)}+\|\nabla (\sigma_1-\sigma_2)\|_{C(\Gamma_0)}\le C\|\hat{\Lambda}_{\sigma_1}-\hat{\Lambda}_{\sigma_2}\|^{2^{-\ell}}.
\end{equation}
Here and henceforward $C=C(n,\Omega ,\kappa ,\Gamma_0,s)>0$ is a constant.

On the other hand, we have from \cite[Theorem 1.1 in page 2461]{CDR}
\begin{equation}\label{7.3}
\|q_{\sigma_1}-q_{\sigma_2}\|_{L^2(\Omega)}\le C\left|\ln \left|\ln \|\hat{\Lambda}_{\sigma_1}-\hat{\Lambda}_{\sigma_2}\|\right|\right|^{-2s/(3+3s)},
\end{equation}
if $\|\hat{\Lambda}_{\sigma_1}-\hat{\Lambda}_{\sigma_2}\|\le \Lambda_0$, for some constant $0<\Lambda_0=\Lambda_0(n,\Omega ,\kappa ,\Gamma_0,s)<e^{-1}$.

Inequalities \eqref{7.2} and \eqref{7.3} in \eqref{7.1} give, where $0<\epsilon <1$,
\begin{equation}\label{7.4}
C\|\sigma_1-\sigma_2\|_{H^1(\Omega)}\le \epsilon ^\beta+e^{c/\epsilon}\left|\ln \left|\ln \|\hat{\Lambda}_{\sigma_1}-\hat{\Lambda}_{\sigma_2}\|\right|\right|^{-2s/(3+3s)},
\end{equation}
whenever $\|\hat{\Lambda}_{\sigma_1}-\hat{\Lambda}_{\sigma_2}\|\le \Lambda_0$.

Define $\Psi_{c,\beta} :[0,\infty )\rightarrow [0,\infty)$, where $0<c<e^{-e}$ and $\beta >0$, as follows
\[
\Psi_\beta (rho)=\left\{
\begin{array}{ll}
0 &\mbox{if}\rho =0,
\\
|\ln |\ln |\ln \rho|||^{-\beta}\quad &\mbox{if}\; \rho \in (0,c),
\\
\rho &\mbox{if}\; \rho \in [c,\infty ).
\end{array}
\right.
\]
We obtain, by minimizing the right hand side of \eqref{7.4} with respect to $\epsilon$, the following result.

\begin{theorem}\label{theorem7.1}
Suppose that $\Omega$ is of class $C^{1,1}$. Then there exist three constants $C=C(n,\Omega ,\kappa ,\Gamma_0,s)>0$, $0<c=c(n,\Omega ,\kappa ,\Gamma_0,s)<e^{-e}$ and $\beta=\beta(n,\Omega ,\kappa ,\Gamma_0,s)>0$ so that, for any $\sigma_1,\sigma_2 \in \hat{\Sigma}$, we have 
\[
\|\sigma_1-\sigma_2\|_{H^1(\Omega)}\le C\Psi_{c,\beta} \left(\|\hat{\Lambda}_{\sigma_1}-\hat{\Lambda}_{\sigma_2}\|\right).
\]
\end{theorem}

\section{Stability at the boundary using oscillating solutions}\label{section5}

The following lemma is essentially due to Kohn and Vogelius \cite{KV84}. The version stated here is borrowed from \cite{Ka} (see Lemma 4.1 in page 142 and its proof). We suppose in this section that  $\Omega$ is again of class $C^{1,1}$.

\begin{lemma}\label{lemma-os.1}
Pick $x_0\in \Gamma$. Then there exists a sequence $(\psi_k)$ in $H^{3/2}(\Gamma)\cap C^{1,1}(\Gamma)$ satisfying, for each $k\ge 1$, the following properties:
\\
$\mathrm{(i)}\; \mathrm{supp}(\psi_k)\subset B(x_0,\mathfrak{c}k^{-1})$,  
\\
$\mathrm{(ii)}\;\|\psi_k\|_{H^{1/2}(\Gamma)}=1$,
\\
$\mathrm{(iii)}\;C^{-1}k^{-(1/2+s)}\le \|\psi_k\|_{H^{-s}(\Gamma)}\le Ck^{-(1/2+s)}$, $-1\le s\le 1$,
\\
where $\mathfrak{c}=\mathfrak{c}(n,\Omega)$ and $C=C(n,\Omega ,s)\ge 1$ are constants.
\end{lemma}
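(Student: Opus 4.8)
The plan is to construct the sequence $(\psi_k)$ explicitly by working in boundary normal coordinates near $x_0$ and transplanting a fixed profile function that oscillates at frequency $k$ while being localized on the scale $k^{-1}$. First I would use the $C^{1,1}$ regularity of $\Gamma$ to choose a local parametrization $\Phi : U \subset \mathbb{R}^{n-1} \to \Gamma$ with $\Phi(0)=x_0$, so that the metric induced on $\Gamma$ is comparable to the Euclidean one (with constants depending only on $n$ and $\Omega$). In these coordinates I would set
\[
\psi_k(\Phi(y)) = a_k\, k^{(n-1)/2}\, \phi(ky)\, e^{i k \omega \cdot y},
\]
where $\phi \in C_0^\infty(\mathbb{R}^{n-1})$ is a fixed bump supported in the unit ball, $\omega \in \mathbb{S}^{n-2}$ is a fixed direction, and $a_k$ is a normalization constant chosen to enforce (ii). The support condition (i) is then immediate from $\mathrm{supp}(\phi) \subset B(0,1)$ together with the bi-Lipschitz character of $\Phi$, which fixes $\mathfrak{c}$. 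Since the paper works with real-valued functions, I would in fact take the real or imaginary part, or replace $e^{ik\omega\cdot y}$ by $\cos(k\omega\cdot y)$, and verify that the estimates are unaffected.

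The core of the argument is the computation of the Sobolev norms. The clean way to do this is via the Fourier transform on $\mathbb{R}^{n-1}$: the profile $g_k(y) = k^{(n-1)/2}\phi(ky)e^{ik\omega\cdot y}$ has Fourier transform $\widehat{g_k}(\xi) = k^{-(n-1)/2}\widehat{\phi}\!\left((\xi - k\omega)/k\right)$, which is concentrated near the frequency $k\omega$ at scale $k$. Consequently, for any $s$ with $-1 \le s \le 1$, the weight $(1+|\xi|^2)^{s}$ is comparable to $k^{2s}$ on the support of $\widehat{g_k}$ (for $k$ large, since $|k\omega| = k$), and a change of variables gives
\[
\|g_k\|_{H^s(\mathbb{R}^{n-1})}^2 = \int (1+|\xi|^2)^s |\widehat{g_k}(\xi)|^2\, d\xi \asymp k^{2s} \|\phi\|_{L^2}^2.
\]
This yields $\|g_k\|_{H^s} \asymp k^{s}\|\phi\|_{L^2}$, so that normalizing in $H^{1/2}$ forces $a_k \asymp k^{-1/2}\|\phi\|_{L^2}^{-1}$, and then $\|\psi_k\|_{H^{-s}} \asymp a_k k^{-s} = k^{-(1/2+s)}$, which is exactly (iii). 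To pass from these flat-space estimates to genuine norms on $\Gamma$, I would invoke the standard fact that for a $C^{1,1}$ hypersurface the intrinsic $H^s(\Gamma)$ norms for $|s| \le 1$ are equivalent, via the partition of unity and the charts, to the corresponding flat norms; the $C^{1,1}$ regularity is precisely what guarantees boundedness of the relevant coordinate change on $H^s$ for $s$ up to $1$.

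I would expect the main obstacle to be the control of the range $|s| \le 1$ uniformly, and in particular the endpoint behavior. The frequency-localization estimate above is robust for $|s|$ bounded, but the constant in (iii) must be shown to depend only on $(n,\Omega,s)$ and not to degenerate; this requires that the chart transition and the metric comparison act boundedly on $H^s(\Gamma)$ for the full range $s \in [-1,1]$, which is exactly the threshold of regularity afforded by a $C^{1,1}$ boundary. A secondary technical point is that (iii) must hold for \emph{all} $k \ge 1$, not merely asymptotically; for small $k$ the frequency $k\omega$ is not large compared to the localization scale, so the comparison $(1+|\xi|^2)^s \asymp k^{2s}$ is less sharp, and one either absorbs this into the constant $C=C(n,\Omega,s)$ (permissible since finitely many $k$ are harmless) or restricts attention to $k$ large and relabels. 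The verification that $\psi_k \in H^{3/2}(\Gamma) \cap C^{1,1}(\Gamma)$ is routine once the profile $\phi$ is taken smooth and $\Gamma$ is $C^{1,1}$, since composition with the charts preserves this membership.
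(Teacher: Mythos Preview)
Your approach is correct and is essentially the standard construction. Note, however, that the paper does not give its own proof of this lemma: it attributes the result to Kohn--Vogelius \cite{KV84} and borrows the precise statement and proof from Kavian's lecture notes \cite{Ka} (see the sentence immediately preceding the lemma). The construction you outline---a rescaled bump times an oscillating phase in a boundary chart, with Sobolev norms read off from the frequency localization of the Fourier transform near $k\omega$---is exactly the argument one finds in those references, and you have correctly isolated the two genuine technical points: the transfer of flat $H^s$ estimates to $H^s(\Gamma)$ via the $C^{1,1}$ charts (which is precisely where the restriction $|s|\le 1$ enters), and the uniformity for small $k$. So your proposal matches the proof the paper defers to.
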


For notational convenience, for $\sigma \in \Sigma$, we set, where $k\ge 1$, $u_\sigma ^k=u_\sigma (\psi_k)$ ($\in H^2(\Omega)$), with $\psi_k$ as in Lemma \ref{lemma-os.1}.

\begin{proposition}\label{proposition-os.1}
Let $0<\rho <\mathrm{diam}(\Omega)$. There exist a  constant $C=C(n,\Omega ,\kappa)>0$  so that, for any $x_0\in \Gamma$, we have
\begin{equation}\label{prop-os.1}
\|u_\sigma^k\|_{H^1(\Omega\setminus \overline{B}_\rho)}\le C\rho^{-1}k^{-1},\quad k\ge 2\mathfrak{c}/\rho,
\end{equation}
where $B_\rho=B(x_0,\rho)$ and $\mathfrak{c}$ is as in Lemma \ref{lemma-os.1}.
\end{proposition}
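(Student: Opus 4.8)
The plan is to exploit the interior regularity (Caccioppoli-type) estimates for the equation $\mathrm{div}(\sigma\nabla u)=0$ away from the support of the boundary data, combined with the sharp bounds on $\psi_k$ from Lemma \ref{lemma-os.1}. The key point is that $u_\sigma^k$ solves a homogeneous elliptic equation in all of $\Omega$ and its boundary data $\psi_k$ is supported in the small ball $B(x_0,\mathfrak{c}k^{-1})$. Hence, on the region $\Omega\setminus\overline{B}_\rho$, which is at distance at least $\rho/2$ from $\mathrm{supp}(\psi_k)$ once $k\ge 2\mathfrak{c}/\rho$, the function $u_\sigma^k$ is a solution of the equation with \emph{zero} Dirichlet data on the relevant portion of the boundary. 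The strategy is therefore to bound the $H^1$ norm on this far region by an $L^2$ norm on a slightly larger region through local elliptic estimates, and then to estimate that $L^2$ norm using a duality/negative-norm argument that converts the decay rate of $\|\psi_k\|_{H^{-s}(\Gamma)}$ into the factor $k^{-1}$.

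Concretely, I would proceed as follows. First I would fix cutoff functions adapted to the geometry: a cutoff $\eta$ equal to $1$ on $\Omega\setminus\overline{B}_\rho$ and vanishing near $\mathrm{supp}(\psi_k)$, supported away from $B(x_0,\rho/2)$. Applying the standard Caccioppoli inequality for $\mathrm{div}(\sigma\nabla u)=0$ to $\eta u_\sigma^k$, and using that $\sigma\ge\kappa^{-1}$ together with $\|\sigma\|_{C^{1,\alpha}}\le\kappa$, I would obtain
\[
\|u_\sigma^k\|_{H^1(\Omega\setminus\overline{B}_\rho)}\le C\rho^{-1}\|u_\sigma^k\|_{L^2(\Omega\setminus\overline{B}_{\rho/2})},
\]
where the factor $\rho^{-1}$ enters through the gradient of the cutoff. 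This reduces the problem to controlling the $L^2$ mass of $u_\sigma^k$ on the far region by $k^{-1}$. The second step is to estimate $\|u_\sigma^k\|_{L^2(\Omega\setminus\overline{B}_{\rho/2})}$ by a duality argument: test the weak formulation against the solution of an auxiliary adjoint problem with source in the far region, which by elliptic regularity is smooth near $\Gamma$, so its trace lies in a high-regularity space; pairing this with $\psi_k$ and invoking property (iii) of Lemma \ref{lemma-os.1} with $s=1$ yields the decay $\|\psi_k\|_{H^{-1}(\Gamma)}\le Ck^{-3/2}$. One can absorb the extra powers of $\rho$ and $k$ to land on the claimed bound $C\rho^{-1}k^{-1}$.

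The main obstacle I expect is the duality step, that is, producing the negative-order norm of $\psi_k$ cleanly. The subtlety is that the natural pairing is $\langle\Lambda_\sigma\psi_k,\cdot\rangle$ against the boundary trace of the adjoint solution, and one must verify that the adjoint solution's trace on $\mathrm{supp}(\psi_k)\subset B(x_0,\mathfrak{c}k^{-1})$ is controlled in $H^{1}(\Gamma)$ (or $H^{s}$ for suitable $s$) uniformly in $k$, with a constant depending only on $\rho$ and the ellipticity data. Here the fact that the source of the adjoint problem sits at distance $\ge\rho/2$ from $x_0$ is essential: interior elliptic estimates guarantee that the adjoint solution is as smooth as we like near $x_0$, so its restriction to the tiny support of $\psi_k$ can be paired against $\psi_k$ in the $H^{-1}/H^{1}$ duality, and property (iii) delivers the gain $k^{-3/2}$. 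Balancing the geometric $\rho$-powers against these $k$-powers to match exactly $C\rho^{-1}k^{-1}$ is the bookkeeping one must carry out, but the conceptual heart is converting the concentration of $\psi_k$ into negative-norm smallness via (iii).
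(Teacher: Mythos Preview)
Your strategy is essentially that of the paper: cut off away from $\mathrm{supp}(\psi_k)$ so that the localized function lies in $H_0^1(\Omega)$, run a Caccioppoli-type estimate to trade the gradient for a weighted $L^2$ mass, and then control that mass by a duality argument against an auxiliary Dirichlet problem, integrating by parts onto the boundary to produce a negative-order norm of $\psi_k$. The paper in fact merges the two steps: with $\varphi=1-\phi$, $\phi$ a cutoff equal to $1$ near $B_{\rho/2}$ and $|\nabla\phi|\le c\rho^{-1}$, one computes directly that
\[
\int_\Omega \sigma|\nabla(\varphi u_\sigma^k)|^2\,dx=\int_\Omega\sigma(u_\sigma^k)^2|\nabla\phi|^2\,dx,
\]
and then represents the right-hand side as $-\int_\Gamma\psi_k\,\sigma\partial_\nu u\,dS$ where $u\in H_0^1(\Omega)\cap H^2(\Omega)$ solves $-\mathrm{div}(\sigma\nabla u)=u_\sigma^k|\nabla\phi|^2$. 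A self-absorption gives $\|u_\sigma^k|\nabla\phi|\|_{L^2}\le C\rho^{-1}k^{-1}$, and Poincar\'e's inequality for $\varphi u_\sigma^k\in H_0^1(\Omega)$ handles the $L^2$ part of the $H^1$ norm.

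The one point to correct in your sketch is the choice $s=1$ and the appeal to \emph{interior} elliptic estimates. The point $x_0$ lies on $\Gamma$, so it is local \emph{boundary} regularity you would need, and under the standing hypotheses ($\Omega$ of class $C^{1,1}$, $\sigma\in C^{1,\alpha}$) the adjoint solution is certainly not ``as smooth as we like'' near $x_0$; placing $\sigma\partial_\nu w$ in $H^1(\Gamma)$ locally, with a constant depending only on $\|f\|_{L^2}$ and the data, is not available from the assumptions. The paper sidesteps this by taking $s=1/2$: the global $H^2$ a priori estimate on a $C^{1,1}$ domain gives $\|\partial_\nu u\|_{H^{1/2}(\Gamma)}\le C\|u_\sigma^k|\nabla\phi|^2\|_{L^2(\Omega)}$, and property~(iii) of Lemma~\ref{lemma-os.1} with $s=1/2$ already yields $\|\psi_k\|_{H^{-1/2}(\Gamma)}\le Ck^{-1}$, which is exactly the power you need. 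Replace $s=1$ by $s=1/2$ and your outline becomes the paper's proof.
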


\begin{proof}
Pick $0<\rho <\mathrm{diam}(\Omega)$ and $x_0\in \Gamma$. Fix $\phi \in C_0^\infty (\Omega)$ so that $0\le \phi \le 1$, $\phi =1$ in a neighborhood of $B_{\rho/2}$ and
$|\nabla \phi|\le c\rho^{-1}$, where $c$ is a universal constant. Let then $\varphi=1-\phi$ and $v^k=\varphi u_\sigma^k$. Furthermore, according to Lemma \ref{lemma-os.1}, we have $\mathrm{supp}(\psi_k)\subset B_{\rho/2}$, for each $k\ge k_\rho=2\mathfrak{c}/\rho$. In consequence, $v_k\in H_0^1(\Omega)$, for each $k\ge k_\rho$.

We assume in the rest of this proof that $k\ge k_\rho$. We have
\begin{align*}
\mathrm{div}(\sigma \nabla v^k)&=\mathrm{div}(\sigma \varphi \nabla u_\sigma^k)+\mathrm{div}(\sigma u_\sigma^k\nabla \varphi) 
\\
& =\sigma \nabla \varphi\cdot \nabla u_\sigma^k+\mathrm{div}(\sigma u_\sigma^k\nabla \varphi).
\end{align*}
Green's formula then yields
\begin{align*}
\int_\Omega \sigma |\nabla v^k|^2dx&= -\int_\Omega \sigma \varphi u_\sigma^k \nabla \varphi\cdot \nabla u_\sigma^kdx +\int_\Omega \sigma u_\sigma^k\nabla \varphi \cdot \nabla (\varphi u_\sigma^k)dx
\\
&=\int_\Omega \sigma (u_\sigma^k)^2|\nabla \varphi|^2dx=\int_\Omega \sigma (u_\sigma^k)^2|\nabla \phi|^2dx.
\end{align*}
Whence
\begin{equation}\label{os1}
\int_{\Omega\setminus \overline{B}_\rho} \sigma |\nabla u_\sigma^k|^2dx\le \kappa ^2\int_\Omega u_\sigma^k (u_\sigma^k |\nabla \phi|^2)dx.
\end{equation}
We write 
\[
\int_\Omega u_\sigma^k (u_\sigma^k |\nabla \phi|^2)dx=-\int_\Omega u_\sigma^k \mathrm{div}(\sigma \nabla u) dx,
\]
where $u\in H_0^1(\Omega )\cap H^2(\Omega)$ is the unique solution of the equation
\[
-\mathrm{div}(\sigma \nabla u)= u_\sigma^k |\nabla \phi|^2 \quad \mathrm{in}\quad \Omega .
\]
Noting  that $\mathrm{div}(\sigma \nabla u_\sigma^k)=0$ in $\Omega$, we obtain by applying Green's formula
\[
-\int_\Omega u_\sigma^k \mathrm{div}(\sigma \nabla u) dx=-\int_\Gamma \psi_k\sigma \partial_\nu udS(x),
\]
from which we derive
\begin{equation}\label{os2}
\int_\Omega u_\sigma^k (u_\sigma^k |\nabla \phi|^2)dx\le \kappa \|\partial_\nu u\|_{H^{1/2}(\Gamma)}\|\psi_k\|_{H^{-1/2}(\Gamma)}.
\end{equation}
Now, the usual $H^2$ a priori estimate (e.g. \cite[Theorem 8.53 in page 326]{RR} and its proof) together with the continuity of the trace operator $w\in H^2(\Omega)\mapsto \partial_\nu w\in H^{1/2}(\Gamma)$ give
\[
\|\partial_\nu u\|_{H^{1/2}(\Gamma)} \le C\|u_\sigma^k |\nabla \phi|^2\|_{L^2(\Omega)}.
\] 
Here and until the end of the proof $C=C(n,\Omega,\kappa)>0$ denotes a generic constant. Thus
\begin{equation}\label{os3}
\|\partial_\nu u\|_{H^{1/2}(\Gamma)} \le C\rho^{-1}\|u_\sigma^k |\nabla \phi|\|_{L^2(\Omega)}.
\end{equation}
In light of two-sided inequality of Lemma \ref{lemma-os.1}, we get by combining \eqref{os2} and \eqref{os3}
\[
\|u_\sigma^k |\nabla \phi|\|_{L^2(\Omega)}\le C\rho^{-1}k^{-1}.
\]
This in \eqref{os1} gives
\begin{equation}\label{os4}
\|\nabla u_\sigma^k\|_{L^2(\Omega\setminus \overline{B}_\rho)}\le C\rho^{-1}k^{-1}.
\end{equation}
As $v^k\in H_0^1(\Omega)$, we have, according to Poincarr\'e's inequality,
\[
\int_\Omega |v^k|^2dx\le c_\Omega \int_\Omega |\nabla v^k|^2.
\]
This and the preceding calculations yield
\begin{equation}\label{os5}
\| u_\sigma^k\|_{L^2(\Omega\setminus \overline{B}_\rho)}\le C\rho^{-1}k^{-1},\quad k\ge k_\rho.
\end{equation}
We obtain the expected inequality by putting together \eqref{os4} and \eqref{os5}.
\end{proof}

\begin{lemma}\label{lemma-os.2}
Let $0<\rho <\mathrm{diam}(\Omega)$. There exists a constant $C=C(n,\Omega ,\kappa)>0$ so that, for any $x_0\in \Gamma$, we have
\begin{equation}\label{lem-os.2}
C\le  \|\nabla u_\sigma ^k\|_{L^2(B_\rho\cap \Omega)}+\rho^{-1}k^{-1}+k^{-1}, \quad k\ge 2\mathfrak{c}/\rho,
\end{equation}
where $B_\rho=B(x_0,\rho)$ and $\mathfrak{c}$ is as in Lemma \ref{lemma-os.1}.

\end{lemma}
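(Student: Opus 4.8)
The plan is to first obtain a lower bound for the \emph{global} energy of $u_\sigma^k$, then to localize it near $x_0$ by discarding the contribution away from $B_\rho$, which is controlled by Proposition \ref{proposition-os.1}, and finally to separate the gradient term from the $L^2$ term inside $B_\rho\cap \Omega$. Here and below $C=C(n,\Omega,\kappa)>0$ is generic. For the global lower bound I would simply use the boundedness of the trace operator $\gamma_0:H^1(\Omega)\rightarrow H^{1/2}(\Gamma)$: since the boundary trace of $u_\sigma^k$ is $\psi_k$ and $\|\psi_k\|_{H^{1/2}(\Gamma)}=1$ by Lemma \ref{lemma-os.1}(ii), we get
\[
1=\|\psi_k\|_{H^{1/2}(\Gamma)}\le C\|u_\sigma^k\|_{H^1(\Omega)},
\]
so that $\|u_\sigma^k\|_{H^1(\Omega)}\ge C^{-1}$.

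Next I would show the smallness $\|u_\sigma^k\|_{L^2(\Omega)}\le Ck^{-1}$ by the same duality device already used in the proof of Proposition \ref{proposition-os.1}. Let $z\in H_0^1(\Omega)\cap H^2(\Omega)$ solve $-\mathrm{div}(\sigma\nabla z)=u_\sigma^k$ in $\Omega$ with $z_{|\Gamma}=0$. Testing the equation against $u_\sigma^k$ and using $\mathrm{div}(\sigma\nabla u_\sigma^k)=0$ together with Green's formula, the interior terms cancel and only a boundary term survives:
\[
\|u_\sigma^k\|_{L^2(\Omega)}^2=-\int_\Gamma \psi_k\,\sigma\partial_\nu z\,dS.
\]
Then the duality between $H^{-1/2}(\Gamma)$ and $H^{1/2}(\Gamma)$, the two-sided estimate of Lemma \ref{lemma-os.1}(iii) taken with $s=1/2$, the boundedness of multiplication by $\sigma$ on $H^{1/2}(\Gamma)$ (as in \eqref{4.1}), the continuity of $w\in H^2(\Omega)\mapsto \partial_\nu w\in H^{1/2}(\Gamma)$, and the $H^2$ a priori estimate $\|z\|_{H^2(\Omega)}\le C\|u_\sigma^k\|_{L^2(\Omega)}$ give
\[
\|u_\sigma^k\|_{L^2(\Omega)}^2\le \|\psi_k\|_{H^{-1/2}(\Gamma)}\,\|\sigma\partial_\nu z\|_{H^{1/2}(\Gamma)}\le Ck^{-1}\|u_\sigma^k\|_{L^2(\Omega)},
\]
whence $\|u_\sigma^k\|_{L^2(\Omega)}\le Ck^{-1}$.

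To conclude I would exploit the additivity $\|w\|_{H^1(\Omega)}^2=\|w\|_{H^1(B_\rho\cap\Omega)}^2+\|w\|_{H^1(\Omega\setminus\overline{B}_\rho)}^2$ together with $\sqrt{a^2+b^2}\le a+b$. Combining the global lower bound with Proposition \ref{proposition-os.1}, for $k\ge 2\mathfrak{c}/\rho$,
\[
C^{-1}\le \|u_\sigma^k\|_{H^1(\Omega)}\le \|u_\sigma^k\|_{H^1(B_\rho\cap\Omega)}+C\rho^{-1}k^{-1}.
\]
Using once more $\|\nabla u_\sigma^k\|_{L^2(B_\rho\cap\Omega)}\ge \|u_\sigma^k\|_{H^1(B_\rho\cap\Omega)}-\|u_\sigma^k\|_{L^2(B_\rho\cap\Omega)}$ and the bound $\|u_\sigma^k\|_{L^2(B_\rho\cap\Omega)}\le\|u_\sigma^k\|_{L^2(\Omega)}\le Ck^{-1}$ from the second step, I obtain
\[
C^{-1}\le \|\nabla u_\sigma^k\|_{L^2(B_\rho\cap\Omega)}+C\rho^{-1}k^{-1}+Ck^{-1},
\]
which, after adjusting the generic constant, is exactly \eqref{lem-os.2}.

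The routine ingredients are all available earlier in the paper, so I expect the one genuinely load-bearing step to be the second, i.e.\ extracting the precise rate $\|u_\sigma^k\|_{L^2(\Omega)}\le Ck^{-1}$. It hinges on choosing $s=1/2$ in Lemma \ref{lemma-os.1}(iii), so that the $H^{-1/2}$-smallness of $\psi_k$ exactly matches the $H^{1/2}$-regularity of $\sigma\partial_\nu z$ produced by the $H^2$ estimate; any loss in this pairing would degrade the $k^{-1}$ decay and break the balance with the $\rho^{-1}k^{-1}$ correction coming from Proposition \ref{proposition-os.1}.
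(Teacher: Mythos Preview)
Your proof is correct. The paper takes a slightly more compact route: instead of separately proving $\|u_\sigma^k\|_{L^2(\Omega)}\le Ck^{-1}$ by duality and then subtracting this from the full $H^1$ lower bound, it invokes the norm equivalence of Lemma~\ref{lemma-a1} (Appendix~\ref{appendixB}), namely that $w\mapsto \|\nabla w\|_{L^2(\Omega)}+\|w_{|\Gamma}\|_{H^{-1/2}(\Gamma)}$ is equivalent to the $H^1(\Omega)$ norm. Combined with the trace bound this gives directly
\[
C\le C\|\psi_k\|_{H^{1/2}(\Gamma)}\le \|\nabla u_\sigma^k\|_{L^2(\Omega)}+\|\psi_k\|_{H^{-1/2}(\Gamma)}\le \|\nabla u_\sigma^k\|_{L^2(\Omega)}+Ck^{-1},
\]
and Proposition~\ref{proposition-os.1} finishes the localization exactly as you do. The two approaches are closely related: the paper's lemma is a soft compactness statement that absorbs the $L^2$ part into $\|\nabla\cdot\|_{L^2}+\|\cdot_{|\Gamma}\|_{H^{-1/2}}$, whereas you establish the quantitative $L^2$ decay directly via the same duality device already used in Proposition~\ref{proposition-os.1} and Lemma~\ref{lemma-os.4}. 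Your argument is self-contained and avoids the appendix lemma; the paper's is shorter once that lemma is in hand.
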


\begin{proof}
In this proof $C=C(n,\Omega ,\kappa)>0$ denotes a generic constant. According to Lemma \ref{lemma-a1} in Appendix \ref{appendixB}, the map 
\[
w\in H^1(\Omega) \mapsto \|\nabla w\|_{L^2(\Omega)}+\|w\|_{H^{-1/2}(\Gamma)}
\]
defines a norm, which is equivalent to the usual norm of $H^{1}(\Omega)$. Hence
\[
C\|u_\sigma ^k\|_{H^{1/2}(\Gamma)}\le \|\nabla u_\sigma ^k\|_{L^2(\Omega)}+\|u_\sigma^k\|_{H^{-1/2}(\Gamma)}= \|\nabla u_\sigma ^k\|_{L^2(\Omega)}+\|\psi_k\|_{H^{-1/2}(\Gamma)}.
\]
Using again two-sided inequality of Lemma \ref{lemma-os.1}, both for $s=-1/2$ and $s=1/2$, in order to get
\[
C\le  \|\nabla u_\sigma ^k\|_{L^2(\Omega)}+k^{-1}.
\]
This and \eqref{prop-os.1} imply 
\[
C\le  \|\nabla u_\sigma ^k\|_{L^2(B_\rho\cap \Omega)}+\rho^{-1}k^{-1}+k^{-1}, \quad k\ge 2\mathfrak{c}/\rho,
\]
as expected.
\end{proof}

Define, where $\sigma \in \Sigma$,
\[
Q_\sigma (u)=\int_\Omega \sigma |\nabla u|^2dx,\quad u\in H^1(\Omega ),
\]
and
\[
K(f)=\{u\in H^1(\Omega );\; u_{|\Gamma}=f\},\quad f\in H^{1/2}(\Gamma).
\]
We recall that
\[
Q_\sigma( u_\sigma (f))=\min\{Q_\sigma(u);\; u\in K(f)\}
\]
(e.g \cite[Section 3 in page 135]{Ka}).

Set, for $\sigma_j\in \Sigma$, $u_j^k=u_{\sigma_j}^k$ and $Q_j=Q_{\sigma_j}$, $j=1,2$. Let $x_0\in \Gamma$ so that $|\sigma (x_0)|=\|\sigma\|_{C(\Gamma)}$, where $\sigma=\sigma_1-\sigma_2$. Without loss of generality we may assume that $|\sigma (x_0)|=\sigma (x_0)$. As we have seen above
\[
\|\sigma\|_{C(\Gamma)}\le \sigma (x)+2\kappa |x-x_0|^\alpha,\quad x\in \Omega,
\] 
that we rewrite in the following form
\[
\|\sigma\|_{C(\Gamma)}+\sigma_2 (x)\le \sigma_1 (x)+2\kappa |x-x_0|^\alpha ,\quad x\in \Omega.
\] 
Hence, where $0<\rho<\mathrm{diam}(\Omega)$,
\begin{align*}
\|\nabla u_1^k\|^2_{B(x_0,\rho)\cap \Omega}\|\sigma\|_{C(\Gamma)}&+\int_{B(x_0,\rho)\cap \Omega}\sigma_2|\nabla u_1^k|^2dx
\\
&\le \int_{B(x_0,\rho)\cap \Omega}\sigma_1|\nabla u_1^k|^2dx+\rho ^\alpha \|\nabla u_1^k\|^2_{B(x_0,\rho)\cap \Omega}.
\end{align*}
That is we have
\begin{align*}
\|\nabla u_1^k\|^2_{B(x_0,\rho)\cap \Omega}\|\sigma\|_{C(\Gamma)}&+Q_2 (u_1^k)-\int_{\Omega\setminus B(x_0,\rho)}\sigma_2|\nabla u_1|^2dx
\\
&\le Q_1(u_1^k)-\int_{\Omega\setminus B(x_0,\rho)}\sigma_1|\nabla u_1|^2dx+\rho ^\alpha \|\nabla u_1^k\|^2_{B(x_0,\rho)\cap \Omega}.
\end{align*}
But $Q_2 (u_2^k)\le Q_2 (u_1^k)$. Whence
\begin{align*}
\|\nabla u_1^k\|^2_{B(x_0,\rho)\cap \Omega}\|\sigma\|_{C(\Gamma)}&+Q_2 (u_2^k)-\int_{\Omega\setminus B(x_0,\rho)}\sigma_2|\nabla u_1|^2dx
\\
&\le Q_1(u_1^k)-\int_{\Omega\setminus B(x_0,\rho)}\sigma_1|\nabla u_1|^2dx+\rho ^\alpha\|\nabla u_1^k\|^2_{B(x_0,\rho)\cap \Omega}.
\end{align*}
On the other hand, we know that
\[
Q_j (u_j^k)=\langle \Lambda_j(\psi_k),\psi_k\rangle_{1/2},\quad j=1,2.
\]
 Note that this identity yields
\[
\kappa^{-1}\|\nabla u_j^k\|_{L^2(\Omega)}^2 \le \|\Lambda_j\|\le C,\quad j=1,2.
\]
In consequence,
\begin{align*}
\|\nabla u_1^k&\|^2_{B(x_0,\rho)\cap \Omega}\|\sigma\|_{C(\Gamma)}-\int_{\Omega\setminus B(x_0,\rho)}\sigma_2|\nabla u_1|^2dx
\\
&\le \langle (\Lambda_1-\Lambda_2)(\psi_k),\psi_k\rangle_{1/2} -\int_{\Omega\setminus B(x_0,\rho)}\sigma_1|\nabla u_1|^2dx+\rho ^\alpha\|\nabla u_1^k\|^2_{B(x_0,\rho)\cap \Omega} .
\end{align*}
In light of Proposition \ref{proposition-os.1} and Lemma \ref{lemma-os.2}, we find
\[
C\|\sigma\|_{C(\Gamma)}\le \|\Lambda_1-\Lambda_2\|+\rho^{-2}k^{-2}+k^{-2}+\rho^\alpha, \quad k\ge 2\mathfrak{c}/\rho.
\]
Making first $k$ converging to $\infty$ and then $\rho$ tending to $0$ in order to obtain
\[
C\|\sigma\|_{C(\Gamma)}\le \|\Lambda_1-\Lambda_2\|.
\]
In other words we proved \eqref{thm-i1.1} of Theorem \ref{theorem-i1}.

Next, we prove \eqref{thm-i1.2} of Theorem \ref{theorem-i1} in which the exponent $\alpha/(\alpha+1)$ is substituted by $\alpha/[2(1+\alpha)]$. To this end, we
denote, where $\sigma \in \Sigma$, by $\lambda_\sigma^1=\lambda_\sigma^1(n,\Omega,\sigma)$ the first eigenvalue of the unbounded operator $-\mathrm{div}(\sigma \nabla \cdot)$ with domain $H_0^1(\Omega)\cap H^2(\Omega)$. We can associate to this eigenvalue a unique eigenfunction $\varphi_\sigma^1$ that satisfies
\[
0<\varphi_\sigma ^1\quad \mathrm{in}\; \Omega \quad \mathrm{and}\quad \|\varphi_\sigma^1\|_{L^2(\Omega)}=1.
\]

\begin{lemma}\label{lemma-os.3}
There exist $C=C(n,\Omega ,\kappa )>1$, $\varrho_0=\varrho_0(n,\Omega ,\kappa )\le \dot{\varrho}$ and $0<\beta =\beta (n)<1$ so that, for any $\sigma \in \Sigma$, we have $\varphi_\sigma^1\in C^2(\Omega)\cap C^{1,\beta}(\overline{\Omega})$ and
\[
C^{-1}\mathrm{dist}(x,\Gamma)\le \varphi_\sigma^1(x)\le C\mathrm{dist}(x,\Gamma),\quad x\in \Omega_{\varrho_0} .
\]
\end{lemma}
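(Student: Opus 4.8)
The plan is to prove the two-sided bound by treating the interior and the boundary strip $\Omega_{\varrho_0}$ separately, and to extract the regularity as a by-product of the a priori estimates that make every constant uniform over $\Sigma$. The decisive structural fact is that $\Sigma$ provides uniform control: $\kappa^{-1}\le \sigma\le \kappa$, $\|\nabla \ln\sigma\|_{C(\overline{\Omega})}\le C(\kappa)$, and, by the Rayleigh quotient
\[
\lambda_\sigma^1=\min_{0\ne u\in H_0^1(\Omega)}\frac{\int_\Omega \sigma|\nabla u|^2dx}{\int_\Omega u^2dx},
\]
the eigenvalue satisfies $\kappa^{-1}\lambda_1\le \lambda_\sigma^1\le \kappa\lambda_1$, where $\lambda_1$ is the first Dirichlet eigenvalue of $-\Delta$ on $\Omega$. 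Hence every elliptic constant below depends only on $n$, $\Omega$ and $\kappa$.

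First I would settle the regularity together with the upper bound. Global De Giorgi--Nash--Moser theory applied to $-\mathrm{div}(\sigma\nabla \varphi_\sigma^1)=\lambda_\sigma^1\varphi_\sigma^1$ with $\varphi_\sigma^1\in H_0^1(\Omega)$ and the normalisation $\|\varphi_\sigma^1\|_{L^2(\Omega)}=1$ gives $\|\varphi_\sigma^1\|_{C(\overline{\Omega})}\le C$. Once $\varphi_\sigma^1\in L^\infty$, writing the equation as $\Delta\varphi_\sigma^1+\nabla\ln\sigma\cdot\nabla\varphi_\sigma^1=-\lambda_\sigma^1\sigma^{-1}\varphi_\sigma^1$ puts the right-hand side in $L^p(\Omega)$ for every $p$, so the global $W^{2,p}$ estimate for the Laplacian on the $C^{1,1}$ domain $\Omega$ yields a uniform $W^{2,p}(\Omega)$ bound; taking $p=2n$ and using $W^{2,p}(\Omega)\hookrightarrow C^{1,\beta}(\overline{\Omega})$ with $\beta=1-n/p=1/2$ gives the claimed $C^{1,\beta}(\overline{\Omega})$ regularity with $\beta=\beta(n)$ and a uniform norm bound, while interior Schauder estimates upgrade $\varphi_\sigma^1$ to $C^{2,\alpha'}_{\mathrm{loc}}(\Omega)\subset C^2(\Omega)$. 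Since $\varphi_\sigma^1$ vanishes on $\Gamma$, for $x\in \Omega_{\dot{\varrho}}$ Proposition \ref{gproposition} gives $\varphi_\sigma^1(x)=\varphi_\sigma^1(x)-\varphi_\sigma^1(\mathfrak{p}(x))\le \|\nabla\varphi_\sigma^1\|_{C(\overline{\Omega})}\,\mathrm{dist}(x,\Gamma)\le C\,\mathrm{dist}(x,\Gamma)$, the upper bound.

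Next I would establish a uniform interior lower bound. From $\|\varphi_\sigma^1\|_{L^2(\Omega)}=1$ and $\|\varphi_\sigma^1\|_{C(\overline{\Omega})}\le C$ one gets $\|\varphi_\sigma^1\|_{L^1(\Omega)}\ge C^{-1}$. Because $\varphi_\sigma^1\le C\,\mathrm{dist}(\cdot,\Gamma)$, the $L^1$ mass carried by the strip $\{\mathrm{dist}(\cdot,\Gamma)<\varrho_0/2\}$ is at most $C\,|\{\mathrm{dist}(\cdot,\Gamma)<\varrho_0/2\}|$, which is made $\le \tfrac12 C^{-1}$ by fixing $\varrho_0=\varrho_0(n,\Omega,\kappa)\le \dot{\varrho}$ small enough; hence at least half the mass sits in the connected core $\omega=\{\mathrm{dist}(\cdot,\Gamma)\ge \varrho_0/2\}$ and $\sup_\omega \varphi_\sigma^1\ge c_1>0$. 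Since $\varphi_\sigma^1>0$ solves $-\mathrm{div}(\sigma\nabla\varphi_\sigma^1)-\lambda_\sigma^1\varphi_\sigma^1=0$ with bounded zeroth order coefficient, Moser's Harnack inequality along a chain of balls covering $\overline{\omega}$ (the number of balls and the Harnack constant depending only on $n,\Omega,\kappa$) gives $\inf_\omega \varphi_\sigma^1\ge c_2>0$, uniformly in $\sigma$.

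Finally, the boundary lower bound $\varphi_\sigma^1\ge C^{-1}\mathrm{dist}(\cdot,\Gamma)$ on $\Omega_{\varrho_0}$ follows from a uniform Hopf barrier argument, and here the key observation that sidesteps the eigenvalue threshold is that $-\mathrm{div}(\sigma\nabla\varphi_\sigma^1)=\lambda_\sigma^1\varphi_\sigma^1\ge 0$, so $\varphi_\sigma^1$ is a supersolution of the \emph{zeroth-order-free} operator $L_0=-\mathrm{div}(\sigma\nabla\cdot)$. The $C^{1,1}$ boundary furnishes a uniform interior ball radius $r_0=r_0(\Omega)$: at each $x_0\in\Gamma$ there is $B(z,r_0)\subset\Omega$ with $\overline{B(z,r_0)}\cap\Gamma=\{x_0\}$ and $\partial B(z,r_0/2)\subset\omega$. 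On the annulus $B(z,r_0)\setminus \overline{B(z,r_0/2)}$ I would use the classical radial subsolution $\underline{w}(x)=e^{-\gamma|x-z|^2}-e^{-\gamma r_0^2}$; for $\gamma=\gamma(n,\kappa,r_0)$ large, $L_0\underline{w}\le 0$ there, $\underline{w}=0$ on $\partial B(z,r_0)$ and $\underline{w}=m>0$ on $\partial B(z,r_0/2)$. Comparing $\varphi_\sigma^1$ with $(c_2/m)\underline{w}$, whose boundary values are dominated by $\varphi_\sigma^1$ on both spheres, the maximum principle for the supersolution $L_0\big(\varphi_\sigma^1-(c_2/m)\underline{w}\big)\ge 0$ yields $\varphi_\sigma^1\ge (c_2/m)\underline{w}$ on the annulus, whence $-\partial_\nu\varphi_\sigma^1(x_0)\ge c>0$ with $c$ uniform in $x_0$ and $\sigma$. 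Combined with the uniform $C^{1,\beta}$ bound along the normal segment of Proposition \ref{gproposition}, this gives $\varphi_\sigma^1(x)\ge -\partial_\nu\varphi_\sigma^1(\mathfrak{p}(x))\,\mathrm{dist}(x,\Gamma)-C\,\mathrm{dist}(x,\Gamma)^{1+\beta}\ge \tfrac{c}{2}\,\mathrm{dist}(x,\Gamma)$ after shrinking $\varrho_0$ once more. The step I expect to be the main obstacle is precisely the uniformity of this barrier estimate: one must verify that $\gamma$, $r_0$, and the Harnack constant are all drawn from $(n,\Omega,\kappa)$ alone, which rests on the uniform ellipticity and drift bounds on $\Sigma$ and the two-sided bound on $\lambda_\sigma^1$. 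It is here that the $C^{1,1}$ hypothesis on $\Omega$ is genuinely used twice over — for the uniform interior ball underlying Hopf, and for the global $W^{2,p}$ estimate underlying the $C^{1,\beta}(\overline{\Omega})$ regularity.
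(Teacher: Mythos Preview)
Your proposal is correct and follows essentially the same architecture as the paper's proof: establish uniform $C^{1,\beta}(\overline{\Omega})$ regularity, upgrade to $C^2(\Omega)$ by interior Schauder, obtain a uniform positive lower bound on $-\partial_\nu\varphi_\sigma^1$ over $\Gamma$, and then integrate along the normal segment of Proposition~\ref{gproposition} to convert the normal-derivative bound into the two-sided distance estimate.

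The one noteworthy difference is in how the uniform Hopf bound is obtained. The paper simply invokes \cite[Theorem~1.1]{CTX}, a ready-made quantitative Hopf lemma giving $\|\varphi_\sigma^1\|_{L^1(\Omega)}\le -C\,\partial_\nu\varphi_\sigma^1(x)$ for every $x\in\Gamma$, and combines it with $1=\|\varphi_\sigma^1\|_{L^2}\le C\|\varphi_\sigma^1\|_{L^1}$. You instead reconstruct this from first principles: an $L^1$ mass argument plus interior Harnack to get $\inf_\omega\varphi_\sigma^1\ge c_2$ on a fixed core $\omega$, followed by the classical exponential barrier on an interior ball tangent to $\Gamma$. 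Your route is longer but fully self-contained and makes the dependence of every constant on $(n,\Omega,\kappa)$ transparent, whereas the paper's is compact but delegates the uniformity check to the cited reference. One minor point worth making explicit in your write-up: to ensure $\partial B(z,r_0/2)\subset\omega=\{\mathrm{dist}(\cdot,\Gamma)\ge\varrho_0/2\}$ you need $\varrho_0\le r_0$, which is just one more harmless shrinking of $\varrho_0$.
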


\begin{proof}
In this proof $C=C(n,\Omega ,\kappa )>1$ denotes a generic constant. By modifying slightly the proof of \cite[Theorem 2.2]{CTX}, we find $0<\beta =\beta (n)<1$ so that $\varphi_\sigma^1\in H^2(\Omega)\cap C^{1,\beta}(\overline{\Omega})$ and
\begin{equation}\label{os6}
\|\varphi_\sigma^1\|_{H^2(\Omega)\cap C^{1,\beta}(\overline{\Omega})}\le C.
\end{equation}

Fix $y\in \Omega$ and pick $r>0$ so that $B(y,2r)\Subset \Omega$. Let $\chi \in C_0^\infty(B(y,2r))$ satisfying, $0\le \chi \le 1$ and $\chi=1$ in a neighborhood of $B(y,r)$. Then a straightforward computations show that
\[
\mathrm{div}(\sigma \nabla (\chi \varphi_\sigma ^1))=-\lambda_\sigma ^1\chi \varphi_\sigma ^1+\sigma \nabla \chi \cdot \nabla \varphi_\sigma^1+\mathrm{div}(\sigma \varphi_\sigma^1\nabla \chi)\in C^\beta (\overline{B}(y,2r)).
\]
We get, by applying the usual H\"older regularity that $\chi \varphi_\sigma ^1\in C^{2,\min(\alpha ,\beta)}(\overline{B}(y,2r))$ (e.g. \cite[Theorem 6.8 in page 100]{GiT}). We deduce that we have in particular $\varphi_\sigma^1\in C^2(B(y,r))$. Since $y\in \Omega$ was fixed arbitrarily, we conclude that $\varphi_\sigma^1\in C^2(\Omega)$. 

Now as $\varphi_\sigma^1\in C^2(\Omega)\cap C^{1,\beta}(\overline{\Omega})$ we can apply \cite[Theorem 1.1]{CTX} in order to get
\begin{equation}\label{os7}
\|\varphi_\sigma^1\|_{L^1(\Omega)}\le -C\partial_\nu \varphi_\sigma^1(x),\quad x\in \Gamma.
\end{equation}
On the other hand, we have, in light of \eqref{os6},
\[
1=\|\varphi_\sigma ^1\|_{L^2(\Omega)}\le C\|\varphi_\sigma ^1\|_{L^1(\Omega )}.
\]
This, together with \eqref{os6} and \eqref{os7}, imply
\begin{equation}\label{os8}
C^{-1}\le -\partial_\nu \varphi_\sigma^1(x)\le C,\quad x\in \Gamma .
\end{equation}

We have, for any $x\in \Omega_{\dot{\varrho}}$,
\begin{align*}
\varphi_\sigma^1(x)&=\varphi_\sigma^1(x)-\varphi_\sigma^1(\tilde{x})=-|x-\tilde{x}|\int_0^1\nabla \varphi_\sigma^1(\tilde{x}-t|x-\tilde{x}|\nu (\tilde{x}))\cdot \nu(\tilde{x})dt
\\
&=-|x-\tilde{x}|\partial_\nu \varphi_\sigma^1(\tilde{x})
\\
&\qquad +|x-\tilde{x}|\int_0^1[\nabla \varphi_\sigma^1(\tilde{x})- \nabla \varphi_\sigma^1(\tilde{x}-t|x-\tilde{x}|\nu (\tilde{x}))]\cdot \nu(\tilde{x})dt,
\end{align*}
where $\tilde{x}=\mathfrak{p}(x)$. In light of \eqref{os6}, we obtain
\[
|x-\tilde{x}|(-\partial_\nu \varphi_\sigma^1(\tilde{x})-C|x-\tilde{x}|^\beta)\le \varphi_\sigma^1(x)\le |x-\tilde{x}|(-\partial_\nu \varphi_\sigma^1(\tilde{x})+C|x-\tilde{x}|^\beta).
\]
We derive, by using \eqref{os8}, that there exists $\varrho_0=\varrho_0(n,\Omega ,\kappa )\le \dot{\varrho}$ so that
\[
C^{-1}|x-\tilde{x}|\le \varphi_\sigma^1(x)\le C|x-\tilde{x}|,\quad x\in \Omega_{\varrho_0}.
\]
In other words, we proved that
\[
C^{-1}\mathrm{dist}(x,\Gamma)\le \varphi_\sigma^1(x)\le C\mathrm{dist}(x,\Gamma),\quad x\in \Omega_{\varrho_0},
\]
as expected
\end{proof}

In the sequel $\varrho_0$ is as in Lemma \ref{lemma-os.3}.

\begin{lemma}\label{lemma-os.4}
Let $x_0\in \Gamma$ and $0<\rho <\varrho_0$. There exist two constants $C_j=C_j(n,\Omega ,\kappa)>0$, $j=1,2$,  so that  we have
\begin{equation}\label{lem-os.4}
\int_{\Omega\cap B_\rho} \mathrm{dist}(x,\Gamma)|\nabla u_\sigma^k|^2dx\ge C_0k^{-1} -C_1(k^{-2}+\rho^{-2}k^{-2}),\quad k\ge 2\mathfrak{c}/\rho,
\end{equation}
where $B_\rho=B(x_0,\rho)$ and $\mathfrak{c}$ is as in Lemma \ref{lemma-os.1}.
\end{lemma}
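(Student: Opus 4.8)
The plan is to test the weighted energy against the first Dirichlet eigenfunction $\varphi=\varphi_\sigma^1$ of $-\mathrm{div}(\sigma\nabla\cdot)$, exploiting the two features of $\varphi$ recorded in Lemma \ref{lemma-os.3}: it vanishes on $\Gamma$ while its normal derivative is bounded away from zero there by \eqref{os8}, and it is comparable to $\mathrm{dist}(\cdot,\Gamma)$ on $\Omega_{\varrho_0}$. Writing $u=u_\sigma^k$ (which is $\sigma$-harmonic and lies in $H^2(\Omega)$) and recalling $\mathrm{div}(\sigma\nabla\varphi)=-\lambda_\sigma^1\varphi$, I would integrate by parts twice. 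Since $\mathrm{div}(\sigma\nabla u)=0$ one has $\sigma\varphi|\nabla u|^2=\mathrm{div}(\varphi\sigma u\nabla u)-\tfrac12\sigma\nabla\varphi\cdot\nabla(u^2)$; integrating over $\Omega$ and using $\varphi_{|\Gamma}=0$ and $u_{|\Gamma}=\psi_k$ produces the identity
\[
\int_\Omega \sigma\varphi|\nabla u|^2\,dx=-\frac12\int_\Gamma \sigma\psi_k^2\,\partial_\nu\varphi\,dS-\frac{\lambda_\sigma^1}{2}\int_\Omega \varphi u^2\,dx.
\]

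The boundary term is the main positive contribution. As $-\partial_\nu\varphi\ge C^{-1}$ on $\Gamma$ by \eqref{os8} and $\sigma\ge\kappa^{-1}$, it dominates $C^{-1}\|\psi_k\|_{L^2(\Gamma)}^2$, and Lemma \ref{lemma-os.1}(iii) taken at $s=0$ gives $\|\psi_k\|_{L^2(\Gamma)}^2\ge C^{-1}k^{-1}$; hence this term is $\ge C_0 k^{-1}$. For the second term I would bound $\varphi$ (via \eqref{os6}) and $\lambda_\sigma^1$ (via the min–max characterisation together with $\kappa^{-1}\le\sigma\le\kappa$) by constants depending only on $n,\Omega,\kappa$, reducing it to $\|u_\sigma^k\|_{L^2(\Omega)}^2$. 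The remaining estimate, $\|u_\sigma^k\|_{L^2(\Omega)}\le Ck^{-1}$, I would get by duality: for $g\in L^2(\Omega)$ let $w\in H_0^1(\Omega)\cap H^2(\Omega)$ solve $-\mathrm{div}(\sigma\nabla w)=g$; Green's formula and $\mathrm{div}(\sigma\nabla u)=0$ give $\int_\Omega u g\,dx=-\int_\Gamma \psi_k\,\sigma\partial_\nu w\,dS$, so that $\big|\int_\Omega ug\big|\le\|\psi_k\|_{H^{-1/2}(\Gamma)}\|\sigma\partial_\nu w\|_{H^{1/2}(\Gamma)}\le Ck^{-1}\|w\|_{H^2(\Omega)}\le Ck^{-1}\|g\|_{L^2(\Omega)}$, using Lemma \ref{lemma-os.1}(iii) at $s=1/2$ and the usual $H^2$ a priori estimate. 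Thus the interior term is $O(k^{-2})$ and $\int_\Omega\sigma\varphi|\nabla u|^2\ge C_0k^{-1}-Ck^{-2}$.

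It then remains to localise to $B_\rho$ and to trade the weight $\sigma\varphi$ for $\mathrm{dist}(\cdot,\Gamma)$. I would split the integral over $\Omega$ into $B_\rho\cap\Omega$ and $\Omega\setminus B_\rho$; on the latter $\sigma\varphi$ is bounded and Proposition \ref{proposition-os.1} controls $\|\nabla u\|_{L^2(\Omega\setminus\overline{B}_\rho)}^2\le C\rho^{-2}k^{-2}$ for $k\ge2\mathfrak{c}/\rho$, whence $\int_{B_\rho\cap\Omega}\sigma\varphi|\nabla u|^2\ge C_0k^{-1}-C(k^{-2}+\rho^{-2}k^{-2})$. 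Finally, since $\rho<\varrho_0$ forces $B_\rho\cap\Omega\subset\Omega_{\varrho_0}$, Lemma \ref{lemma-os.3} together with $\sigma\le\kappa$ gives $\sigma\varphi\le C\,\mathrm{dist}(\cdot,\Gamma)$ there, i.e. $\mathrm{dist}(\cdot,\Gamma)|\nabla u|^2\ge C^{-1}\sigma\varphi|\nabla u|^2$ pointwise; integrating and relabelling the constants yields \eqref{lem-os.4}.

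I expect the main obstacle to be the $L^2$ bound $\|u_\sigma^k\|_{L^2(\Omega)}\le Ck^{-1}$. The available estimates (Proposition \ref{proposition-os.1} and Lemma \ref{lemma-os.2}) control $u$ only away from $x_0$ or through its gradient, and the concentration of $\psi_k$ near $x_0$ means $\nabla u$ need not be small on $B_\rho$; the decisive gain must instead come from the smallness of $\psi_k$ in negative norms, which is exactly what the duality argument converts into $L^2$ decay of the solution. A secondary point requiring care is the justification of the two integrations by parts, which is legitimate because $u\in H^2(\Omega)$ and $\varphi\in H^2(\Omega)\cap C^{1,\beta}(\overline{\Omega})$ by Lemma \ref{lemma-os.3}.
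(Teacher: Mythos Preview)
Your proposal is correct and follows essentially the same route as the paper: the identity $\int_\Omega\sigma\varphi|\nabla u|^2=-\tfrac12\int_\Gamma\sigma\psi_k^2\partial_\nu\varphi-\tfrac{\lambda^1}{2}\int_\Omega\varphi u^2$, the lower bound on the boundary term from $\|\psi_k\|_{L^2(\Gamma)}^2\ge Ck^{-1}$ and \eqref{os8}, a duality argument for the interior term, then localisation via Proposition~\ref{proposition-os.1} and the comparison $\varphi\sim\mathrm{dist}(\cdot,\Gamma)$ from Lemma~\ref{lemma-os.3}. The only (harmless) variation is that you dualise against an arbitrary $g\in L^2$ to obtain the stronger bound $\|u_\sigma^k\|_{L^2(\Omega)}\le Ck^{-1}$, whereas the paper takes the specific right-hand side $g=\varphi^1u^k$ and bootstraps to $\int_\Omega\varphi^1(u^k)^2\le Ck^{-2}$ directly.
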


\begin{proof}
Let $x_0\in \Gamma$ and $0<\rho <\varrho_0$ and set $k_\rho=2\mathfrak{c}/\rho$, $\mathfrak{c}$ is as in Lemma \ref{lemma-os.1}. We have seen above that $\mathrm{supp}(\psi_k)\subset B(x_0,\rho)$ for each $k\ge k_\rho$. Pick $\sigma \in \Sigma$ and set $u^k=u_\sigma ^k$, $\lambda^1=\lambda_\sigma^1$ and $\varphi^1=\varphi_\sigma^1$. 

In the rest of this proof we assume that $k\ge k_\rho$. Also, $C=C(n,\Omega ,\kappa)>0$ and $C_j=C_j(n,\Omega ,\kappa)>0$, $j=1,2$, denote generic constants. Taking into account that $\varphi^1u^k\in H_0^1(\Omega)$, we obtain 
\[
0=-\int_\Omega \mathrm{div}(\sigma \nabla u^k)\varphi^1u^kdx=\int_\Omega \sigma\varphi^1 |\nabla u^k|^2dx+\int_\Omega \sigma u^k\nabla u^k\cdot \nabla \varphi^1dx.
\]
But 
\begin{align*}
\int_\Omega \sigma u^k\nabla u^k\cdot \nabla \varphi^1dx&=\frac{1}{2}\int_\Omega \sigma \nabla (u^k)^2\cdot \nabla \varphi^1dx
\\
&=\frac{\lambda^1}{2}\int_\Omega \varphi^1(u^k)^2dx+\frac{1}{2}\int_\Gamma \sigma(\psi_k)^2\partial_\nu \varphi^1dS(x).
\end{align*}
Hence
\begin{equation}\label{os9}
\int_\Omega \sigma\varphi^1 |\nabla u^k|^2dx =-\frac{\lambda^1}{2}\int_\Omega \varphi^1(u^k)^2dx-\frac{1}{2}\int_\Gamma\sigma (\psi_k)^2\partial_\nu \varphi^1dS(x).
\end{equation}
Using that $\|\psi_k\|_{L^2(\Gamma)}\ge Ck^{-1/2}$ and $-\partial_\nu \varphi^1\ge C$ we get
\[
-\int_\Gamma\sigma (\psi_k)^2\partial_\nu \varphi^1dS(x)\ge Ck^{-1}.
\]
Thus we have, in light of \eqref{os9},
\begin{equation}\label{os10}
\int_\Omega \sigma\varphi^1 |\nabla u^k|^2dx \ge -\frac{\lambda^1}{2}\int_\Omega (\varphi^1u^k)u^kdx+Ck^{-1}.
\end{equation}

Denote by $u\in H_0^1(\Omega)\cap H^2(\Omega)$ the solution of the equation
\[
-\mathrm{div}(\sigma \nabla u)=\varphi^1u^k\quad \mathrm{in}\; \Omega.
\]
Then
\begin{align*}
\|(\varphi ^1)^{1/2}u^k\|_{L^2(\Omega)}^2=\int_\Omega (\varphi^1u^k)u^kdx&=-\int_\Omega \mathrm{div}(\sigma \nabla u)u^k
\\
&=-\int_\Gamma \sigma \partial_\nu uu^kdS(x).
\\
&\le C\|\partial_\nu u\|_{H^{1/2}(\Gamma)}\|u^k\|_{H^{-1/2}(\Gamma)}
\\
&\le Ck^{-1}\|\partial_\nu u \|_{H^{1/2}(\Gamma)}.
\end{align*}
The usual $H^2$ a priori estimate yields $\|\partial_\nu u \|_{H^{1/2}(\Gamma)}\le C\|\varphi ^1u^k\|_{L^2(\Omega)}$ (e.g. \cite[Theorem 8.53 in page 326]{RR} and its proof). Whence 
\[
\|\partial_\nu u \|_{H^{1/2}(\Gamma)}\le C\|(\varphi ^1)^{1/2}u^k\|_{L^2(\Omega)}. 
\]
Therefore
\[
\int_\Omega (\varphi^1u^k)u^kdx\le Ck^{-2}.
\]
This in \eqref{os10} gives
\[
\int_\Omega \varphi^1 |\nabla u^k|^2dx\ge C_0k^{-1} -C_1k^{-2}.
\]
We have also from \eqref{prop-os.1}
\[
\|\nabla u_\sigma^k\|_{H^1(\Omega\setminus \overline{B}_\rho)}\le C\rho^{-1}k^{-1}.
\]
In consequence, 
\[
\int_{B_\rho\cap \Omega} \varphi^1 |\nabla u^k|^2dx\ge C_0k^{-1} -C_1(k^{-2}+\rho^{-2}k^{-2}).
\]
We end up getting the expected inequality by applying Lemma \ref{lemma-os.3}.
\end{proof}

Assume that
\[
\|\partial_\nu \sigma\|_{C(\Gamma)}:=\epsilon >0.
\]
Let $x_0\in \Gamma$ so that $|\partial_\nu\sigma (x_0)|= \|\partial_\nu \sigma\|_{C(\Gamma)}$. We have, for $x\in \Omega\cap B(x_0,\rho)$,
\[
|\nabla \sigma (x)\cdot\nu (\tilde{x})|\ge |\nabla \sigma (x_0)\cdot \nu(\tilde{x})|-2\kappa |x-x_0|^\alpha,
\]
where we set $\tilde{x}=\mathfrak{p}(x)$. But
\[
|\nabla \sigma (x_0)\cdot \nu(\tilde{x})| \ge |\partial_\nu \sigma (x_0)|-2\kappa |\nu(\tilde{x})-\nu(x_0)|.
\]
On the other hand, as $\Omega$ is $C^{1,1}$, there exists $c=(n,\Omega,\alpha)>0$ so that
\[
|\nu(\tilde{x})-\nu(x_0)|\le c|x_0-\tilde{x}|^\alpha\le 2c|x-x_0|^\alpha.
\]
Whence
\[
|\nabla \sigma (x)\cdot\nu (\tilde{x})|\ge |\partial_\nu \sigma (x_0)|-2\kappa(1+2c) |x-x_0|^\alpha.
\]
If $\rho_0=\min ([\epsilon/(4\varkappa(1+2c) )]^{1/\alpha},\varrho_0)$, we obtain, for each $0<\rho\le \rho_0$,
\[
|\nabla \sigma (x)\cdot\nu (\tilde{x})|\ge |\partial_\nu \sigma (x_0)|/2,\quad x\in B(x_0,\rho).
\]
Let $x\in B(x_0,\rho/2)\cap \Omega$. Then, by Proposition \ref{gproposition}, we have 
\[
x_t=\tilde{x}-t|x-\tilde{x}|\nu(\tilde{x})\in \Omega_{\dot{\varrho}}\cap B(x_0,\rho),\quad 0< t\le 1,
\]
and
\[
\tilde{x}_t=\mathfrak{p}(\tilde{x}_t)=\tilde{x}.
\]
In consequence,
\[
|\nabla \sigma (x_t)\cdot\nu (\tilde{x})|\ge |\partial_\nu \sigma (x_0)|/2.
\]
In light of  the mean value theorem, there exists $0<t_0<1$ so that 
\[
|\sigma (x)-\sigma (\tilde{x})|=|\nabla \sigma (x_{t_0})\cdot \nu(\tilde{x})||x-\tilde{x}|.
\]
Whence
\[
|\sigma (x)-\sigma (\tilde{x})|=|\nabla \sigma (x_{t_0})\cdot \nu(\tilde{x})||x-\tilde{x}|\ge \mathrm{dist}(x,\Gamma)\|\partial_\nu \sigma\|_{C(\Gamma)}/2.
\]
Without loss of generality, we assume
\[
\mathrm{dist}(x,\Gamma)\|\partial_\nu \sigma\|_{C(\Gamma)}/2\le \sigma (x)-\sigma (\tilde{x}), \quad x\in B(x_0,\rho/2).
\]

We can proceed similarly as above in order to get
\[
Ck^{-1}\|\partial_\nu \sigma \|_{C(\Gamma)}\le \|\Lambda_1-\Lambda_2\|+k^{-2}+\rho^{-2}k^{-2},\quad k\ge 2\mathfrak{c}/\rho,\; 0<\rho \le \rho_0,
\]
where we used that $\|\sigma\|\le C\|\Lambda_1-\Lambda_2\|$. That is we have
\[
C\|\partial_\nu \sigma \|_{C(\Gamma)}\le k\|\Lambda_1-\Lambda_2\|+(1+\rho^{-2})k^{-1},\quad k\ge 2\mathfrak{c}/\rho,\; 0<\rho \le \rho_0.
\]
In this inequality we take $k$ of the form $k=[t+1]$ (the entire part of $t+1$), with $t\in \mathbb{R}$ satisfying $t\ge 2$. We find, by taking into account that $t\le k\le 2t$,
\[
C\|\partial_\nu \sigma \|_{C(\Gamma)}\le t\|\Lambda_1-\Lambda_2\|+(1+\rho^{-2})t^{-1},\quad t\ge 2\mathfrak{c}/\rho,\; 0<\rho \le \rho_0.
\]
It is clear that if $\|\Lambda_1-\Lambda_2\|=0$ then we get $\|\partial_\nu \sigma \|_{C(\Gamma)}=0$ by passing to the limit when $t\rightarrow \infty$ in
\[
C\|\partial_\nu \sigma \|_{C(\Gamma)}\le (1+\rho^{-2})t^{-1},\quad t\ge 2\mathfrak{c}/\rho,\; 0<\rho \le \rho_0.
\] 
But this is impossible since we assumed that $\|\partial_\nu \sigma \|_{C(\Gamma)}\ne 0$. We then choose $t$ in such a way that
\[
t\|\Lambda_1-\Lambda_2\|=(1+\rho^{-2})t^{-1}.
\]
That is
\[
t^2=(1+\rho^{-2})\|\Lambda_1-\Lambda_2\|^{-1}.
\]
This choice is possible  whenever
\[
(1+\rho^{-2})\|\Lambda_1-\Lambda_2\|^{-1}\ge 4\mathfrak{c}^2\rho^{-2},
\]
which is equivalent to the following inequality
\[
(1+\rho^2)\ge 4\mathfrak{c}^2\|\Lambda_1-\Lambda_2\|.
\]
This condition is satisfied  for instance  if
\[
4\mathfrak{c}^2\|\Lambda_1-\Lambda_2\|\le 1.
\]
Under this condition we obtain
\begin{equation}\label{nd1}
C\|\partial_\nu \sigma \|_{C(\Gamma)}\le (1+\rho^{-1})\|\Lambda_1-\Lambda_2\|^{1/2}.
\end{equation}
When $\rho_0=\varrho_0$ then the last inequality yields in a straightforward manner, by taking $\rho=\varrho_0$, that
\[
C\|\partial_\nu \sigma \|_{C(\Gamma)}\le \|\Lambda_1-\Lambda_2\|^{1/2},
\]
Otherwise, we have $\rho_0=[\epsilon/(4\varkappa(1+c) )]^{1/\alpha}=\tilde{c}\epsilon^{1/\alpha}$. We get, by taking $\rho=\rho_0$ in \eqref{nd1},
\[
C\|\partial_\nu \sigma \|_{C(\Gamma)}\le (1+\|\partial_\nu \sigma \|_{C(\Gamma)}^{-1/\alpha})\|\Lambda_1-\Lambda_2\|^{1/2}.
\]
Hence
\[
C\|\partial_\nu \sigma \|_{C(\Gamma)}^{1+1/\alpha}\le \|\Lambda_1-\Lambda_2\|^{1/2}.
\]
In other words, we have
\[
C\|\partial_\nu \sigma \|_{C(\Gamma)}\le \|\Lambda_1-\Lambda_2\|^{\alpha/[2(1+\alpha)]}.
\]
This estimate is obviously satisfied if $4\mathfrak{c}^2\|\Lambda_1-\Lambda_2\|\ge 1$. Hence the expected inequality follows.

The results of this section improve and complete those of \cite{Ka}.

\appendix

\section{}\label{appendixA}

We will use the following lemma in the proof of Proposition \ref{gproposition}.
\begin{lemma}\label{glemma}
Assume that $\Omega$ is of class $C^{1,1}$. If $x_0\in \Gamma$ then we find $\varrho=\varrho(x_0)>0$ so that :
\\
$\mathrm{(i)}$ for any $x\in \Omega \cap B(x_0,\varrho)$, there exists a unique $\mathfrak{p}(x)\in \Gamma$ such that
\[
|x-\mathfrak{p}(x)|=\mathrm{dist}(x,\Gamma)\quad \mathrm{and}\quad x= \mathfrak{p}(x)-|x-\mathfrak{p}(x)|\nu(\mathfrak{p}(x)),
\]
$\mathrm{(ii)}$ when $x\in \Omega \cap B(x_0,\varrho/2)$, we have $x_t=\mathfrak{p}(x)-t|x-\mathfrak{p}(x)|\nu(\mathfrak{p}(x))\in \Omega \cap B(x_0,\varrho)$, $t\in ]0,1]$, and
\[
\mathfrak{p}(x_t)=\mathfrak{p}(x)\quad \mbox{and}\quad |x_t-\mathfrak{p}(x)|=t\mathrm{dist}(x,\Gamma).
\]
\end{lemma}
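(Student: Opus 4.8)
The plan is to reduce everything to a local graph representation of $\Gamma$ and to extract from the $C^{1,1}$ regularity a uniform interior and exterior ball condition, which is precisely the geometric input that fails for merely $C^1$ boundaries. After a rigid motion sending $x_0$ to the origin with $\nu(x_0)=-e_n$, I would write $\Gamma$ near $x_0$ as the graph $x_n=\phi(x')$ of a function $\phi\in C^{1,1}$ with $\phi(0)=0$ and $\nabla\phi(0)=0$, the interior $\Omega$ corresponding locally to $\{x_n>\phi(x')\}$. Denoting by $L$ the Lipschitz constant of $\nabla\phi$, the identity $\phi(y')=\int_0^1\nabla\phi(sy')\cdot y'\,ds$ together with $\nabla\phi(0)=0$ gives the quadratic bound $|\phi(y')|\le \tfrac{L}{2}|y'|^2$. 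A direct comparison then shows that, with $R=1/L$, the ball $B(Re_n,R)$ lies in $\{x_n>\phi(x')\}$ while $B(-Re_n,R)$ lies in its complement; transported to an arbitrary boundary point this yields, uniformly in the base point, interior and exterior tangent balls $B(p\mp r_0\nu(p),r_0)$ of a fixed radius $r_0=r_0(n,\Omega)>0$.

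For $\mathrm{(i)}$, existence of a nearest point is immediate since $\Gamma$ is compact; at a minimizer $p$ the first-order optimality condition forces $x-p$ to be orthogonal to $T_p\Gamma$, hence parallel to $\nu(p)$, and since $x\in\Omega$ this reads $x=p-\mathrm{dist}(x,\Gamma)\nu(p)$. I would then choose $\varrho=\varrho(x_0)$ small enough that every $x\in\Omega\cap B(x_0,\varrho)$ satisfies $d:=\mathrm{dist}(x,\Gamma)<r_0$ and that its minimizers stay inside the chart. Uniqueness is the heart of the matter: the interior tangent ball $B(c,r_0)\subset\Omega$ with $c=p-r_0\nu(p)$ contains $x$, since $|x-c|=r_0-d$. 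If $q$ were another nearest point, then $q\in\Gamma$ forces $|q-c|\ge r_0$, while the triangle inequality gives $|q-c|\le|q-x|+|x-c|=d+(r_0-d)=r_0$; the resulting equality case places $x$ on the segment $[q,c]$, and since $x-c=(r_0-d)\nu(p)$ this forces $q=c+r_0\nu(p)=p$.

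For $\mathrm{(ii)}$, I would exploit that the open ball $B(x,d)$ is contained in $\Omega$. For $x\in\Omega\cap B(x_0,\varrho/2)$ and $t\in\,]0,1]$, the point $x_t=\mathfrak{p}(x)-td\,\nu(\mathfrak{p}(x))$ lies on the segment from $\mathfrak{p}(x)$ to $x$, and the inclusion $B(x_t,td)\subset B(x,d)$ follows from $|w-x|\le|w-x_t|+|x_t-x|<td+(1-t)d=d$. Hence $\mathrm{dist}(x_t,\Gamma)\ge td$, while $\mathfrak{p}(x)\in\Gamma$ with $|x_t-\mathfrak{p}(x)|=td$ gives the reverse inequality; thus $\mathrm{dist}(x_t,\Gamma)=td<r_0$ and $\mathfrak{p}(x)$ realizes it, so the uniqueness from $\mathrm{(i)}$ yields $\mathfrak{p}(x_t)=\mathfrak{p}(x)$ together with the asserted normal representation. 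Finally $x_t\in B(x,d)\subset\Omega$ and, using $d\le|x-x_0|<\varrho/2$, one has $|x_t-x_0|\le(1-t)d+|x-x_0|\le 2|x-x_0|<\varrho$, keeping $x_t$ in $\Omega\cap B(x_0,\varrho)$. The main obstacle, and the only place where the full strength of $C^{1,1}$ is used, is the uniform ball condition of the first paragraph: it is exactly the positive reach of $\Gamma$ that makes the nearest-point projection single-valued and the normal map a bijection onto a tubular neighborhood, whereas under $C^1$ regularity alone the radius $r_0$ may degenerate to $0$.
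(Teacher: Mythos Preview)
Your argument is correct and self-contained, whereas the paper does not prove this lemma from scratch: its entire proof consists of citing \cite[Theorem~4.3 and formula~(3.4), pp.~214--219]{DZ} for part~(i) and \cite[Theorem~4.4, p.~224]{DZ} for part~(ii). So your approach is genuinely different in that you unpack the underlying geometry rather than invoking the Delfour--Zol\'esio theory of positive reach. What you gain is an elementary, chart-based argument: the quadratic bound $|\phi(y')|\le\tfrac{L}{2}|y'|^2$ coming from the Lipschitz gradient immediately gives the uniform two-sided ball condition, and your triangle-inequality equality-case argument for uniqueness of the foot point is clean and avoids any differential-geometric machinery. What the paper's route buys is brevity and a direct link to the sharp statement that the projection $\mathfrak{p}$ is well defined exactly on the tubular neighborhood of width equal to the reach of $\Gamma$, together with the $C^{0,1}$ regularity of the signed distance that \cite{DZ} packages. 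One minor point worth tightening in your write-up: when you pass from the local chart at $x_0$ to ``transported to an arbitrary boundary point,'' you implicitly need a finite-cover compactness argument to make $r_0$ uniform over $\Gamma$, and you should cap $r_0$ so that the tangent balls stay within the domain of the graph representation; both are routine but should be said.
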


\begin{proof}
$\mathrm{(i)}$ follows readily from \cite[Theorem 4.3 in page 219]{DZ} and \cite[formula (3.4)  in page 214]{DZ}. While $\mathrm{(ii)}$ is a consequence of \cite[Theorem 4.4 in page 224]{DZ}.
\end{proof}

Observe that Lemma \ref{glemma} is no longer valid for domains with less regularity than $C^{1,1}$ (e.g \cite[Example 4.1, page 222]{DZ}).

\begin{proof}[Proof of Proposition \ref{gproposition}]
We keep the notations of the preceding lemma. By compactness of $\Gamma$, we find $x_0^1,\ldots ,x_0^p$ in $\Gamma$ so that
\begin{equation}\label{glem1}
\Gamma \subset \bigcup_{j=1}^pB(x_0^j,\varrho(x_0^j)/2).
\end{equation}
We claim that there exists $\dot{\varrho}>0$ so that
\[
\Omega \setminus \bigcup_{j=1}^pB(x_0^j,\varrho(x_0^j)/2)\subset \{x\in \Omega ;\; \mathrm{dist}(x,\Gamma)>\dot{\varrho}\}.
\]
We proceed by contradiction. So, if this is not the case, we find a sequence $(x_k)$ in $\Omega \setminus \bigcup_{j=1}^p\overline{B}(x_0^j,\varrho(x_0^j)/2)$ satisfying $\mathrm{dist}(x_k,\Gamma)\rightarrow 0$ when $k$ goes to $\infty$. Subtracting a subsequence if necessary, we may assume that $x_k$ converge to $\overline{x}\in \overline{\Omega}$. The continuity of the distance function yields $\mathrm{dist}(\overline{x},\Gamma)=0$ and hence $\overline{x}\in \Gamma$. In light of \eqref{glem1}, $\overline{x}\in B(x_0^j,\varrho(x_0^j)/2)$, for some $1\le j\le p$. Whence $x_k \in B(x_0^j,\varrho(x_0^j)/2)$, when $k$ sufficiently large, which leads to the expected contradiction. In other words, we proved
\[
\Omega_{\dot{\varrho}}\subset \bigcup_{j=1}^pB(x_0^j,\varrho(x_0^j)/2).
\]
We complete the proof by using Lemma \ref{glemma}.
\end{proof}

\section{}\label{appendixB}

We prove the following lemma.

\begin{lemma}\label{lemma-a1}
We have
\begin{equation}\label{ap1}
\inf\{ \|\nabla w\|_{L^2(\Omega)}+\|w\|_{H^{-1/2}(\Gamma)};\; w\in H^1(\Omega),\; \|w\|_{L^2(\Omega)}=1\}>0.
\end{equation}
\end{lemma}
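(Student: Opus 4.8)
The plan is to prove \eqref{ap1} by a standard compactness and contradiction argument, exploiting the compact embedding $H^1(\Omega)\hookrightarrow L^2(\Omega)$ (valid since $\Omega$ is bounded Lipschitz). Suppose, to the contrary, that the infimum is zero. Then one can find a sequence $(w_k)$ in $H^1(\Omega)$ with $\|w_k\|_{L^2(\Omega)}=1$ for all $k$ and
\[
\|\nabla w_k\|_{L^2(\Omega)}+\|w_k\|_{H^{-1/2}(\Gamma)}\longrightarrow 0,\quad k\to\infty,
\]
where $\|w_k\|_{H^{-1/2}(\Gamma)}$ stands for the norm of the trace $\gamma_0 w_k$ viewed in $H^{-1/2}(\Gamma)$ through the chain $H^{1/2}(\Gamma)\hookrightarrow L^2(\Gamma)\hookrightarrow H^{-1/2}(\Gamma)$. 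In particular $\|\nabla w_k\|_{L^2(\Omega)}\to 0$, so together with $\|w_k\|_{L^2(\Omega)}=1$ the sequence $(w_k)$ is bounded in $H^1(\Omega)$.

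First I would extract, after passing to a subsequence, a weak limit $w_k\rightharpoonup w$ in $H^1(\Omega)$ and, by Rellich's theorem, a strong limit $w_k\to w$ in $L^2(\Omega)$; hence $\|w\|_{L^2(\Omega)}=1$, so $w\not\equiv 0$. By weak lower semicontinuity of the $L^2$-norm of the gradient, $\|\nabla w\|_{L^2(\Omega)}\le\liminf_k\|\nabla w_k\|_{L^2(\Omega)}=0$, whence $\nabla w=0$. Since $\Omega$ is connected this forces $w$ to be a constant $c$, which must be nonzero because $c^2|\Omega|=1$.

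It remains to derive the contradiction from the boundary term. The trace operator $\gamma_0$ being continuous and linear from $H^1(\Omega)$ into $H^{1/2}(\Gamma)$, the weak convergence $w_k\rightharpoonup w$ in $H^1(\Omega)$ yields $\gamma_0 w_k\rightharpoonup \gamma_0 w$ in $H^{1/2}(\Gamma)$, and composing with the continuous injection $H^{1/2}(\Gamma)\hookrightarrow H^{-1/2}(\Gamma)$ also weakly in $H^{-1/2}(\Gamma)$. On the other hand $\gamma_0 w_k\to 0$ strongly in $H^{-1/2}(\Gamma)$ by hypothesis, so the two limits must coincide and $\gamma_0 w=0$ in $H^{-1/2}(\Gamma)$. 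But $\gamma_0 w=c$ is a nonzero constant, and testing against the constant function $1\in H^{1/2}(\Gamma)$ gives $\langle c,1\rangle_{1/2}=c\,|\Gamma|\ne 0$, so $c$ cannot vanish as an element of $H^{-1/2}(\Gamma)$. This contradiction proves that the infimum in \eqref{ap1} is positive. The point requiring the most care is the passage on the boundary: strong $L^2(\Omega)$ convergence alone says nothing about the traces, so it is essential to use the weak $H^1(\Omega)$ convergence together with the continuity of $\gamma_0$ to control $\gamma_0 w_k$, and then to observe that a nonzero constant is a genuinely nonzero element of $H^{-1/2}(\Gamma)$.
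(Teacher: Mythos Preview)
Your proof is correct and follows essentially the same compactness--contradiction argument as the paper: both extract a weakly $H^1$-convergent, strongly $L^2$-convergent subsequence, use weak lower semicontinuity and weak continuity of the trace, and reach a contradiction with $\|w\|_{L^2(\Omega)}=1$. The only cosmetic difference is that the paper applies weak lower semicontinuity simultaneously to $\|\nabla w\|_{L^2(\Omega)}$ and $\|w\|_{H^{-1/2}(\Gamma)}$ to conclude $w=0$ directly, whereas you separate the two steps and invoke connectedness of $\Omega$ to identify $w$ as a nonzero constant before deriving the contradiction on the boundary.
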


\begin{proof}
We proceed by contradiction. If \eqref{ap1} does not hold then we find a sequence $(w_k)_{k\ge 1}\in H^1(\Omega )$ satisfying $\|w_k\|_{L^2(\Omega)}=1$ and
\begin{equation}\label{ap2}
\|\nabla w_k\|_{L^2(\Omega)}+\|w_k\|_{H^{-1/2}(\Gamma)}\le 1/k,\quad k\ge 1.
\end{equation}
In particular, $(w_k)$ is bounded in $H^1(\Omega)$. Subtracting a subsequence if necessary, we may assume that $(w_k)$ converges weakly in $H^1(\Omega)$ and strongly in $L^2(\Omega)$ to $w\in H^1(\Omega)$. As $h\in H^1(\Omega)\mapsto h_{|\Gamma}\in H^{-1/2}(\Gamma)$ is linear and continuous, it is also continuous when $H^1(\Omega)$ and  $H^{-1/2}(\Gamma)$ are endowed with their weak topology. Thus $w_k{_{|\Gamma}}$ converges weakly to $w_{|\Gamma}$ in $H^{-1/2}(\Gamma)$. Therefore, using that any norm is lower semi-continuous for the weak topology, we obtain from \eqref{ap2}
\[
\|\nabla w\|_{L^2(\Omega)}+\|w\|_{H^{-1/2}(\Gamma)}\le \liminf_k\|\nabla w_k\|_{L^2(\Omega)}+\liminf_k\|w_k\|_{H^{-1/2}(\Gamma)}=0
\]
and hence $w=0$. But this contradicts the fact that
\[
1=\lim_k\|w_k\|_{L^2(\Omega)}=\|w\|_{L^2(\Omega)}.
\]
The proof is then complete.
\end{proof}

\end{document}